\numberwithin{equation}{section}
\theoremstyle{plain}   
\newtheorem{bigthm}{Theorem}   
\newtheorem{theorem}[equation]{Theorem}  
\newtheorem{cor}[equation]{Corollary}     
\newtheorem{lemma}[equation]{Lemma}         
\newtheorem{prop}[equation]{Proposition}
\theoremstyle{definition}
\theoremstyle{remark}
\newtheorem{remark}[equation]{Remark}
\newtheorem{question}[equation]{Question}
\newcommand{\Spec}{\operatorname{Spec}}
\newcommand{\gr}{\operatorname{gr}}
\newcommand{\TC}{\operatorname{TC}}
\newcommand{\TR}{\operatorname{TR}}
\newcommand{\THH}{\operatorname{THH}}
\newcommand{\Z}{\mathbb{Z}}
\newcommand{\Fp}{\mathbb{F}_p}
\newcommand{\Sm}{\operatorname{Sm}}
\newcommand{\Sch}{\operatorname{Sch}}
\newcommand{\et}{\operatorname{et}}
\newcommand{\Zar}{\operatorname{Zar}}
\newcommand{\eh}{\operatorname{eh}}
\newcommand{\cdh}{\operatorname{cdh}}
\newcommand{\Nis}{\operatorname{Nis}}
\newcommand{\sh}{{\operatorname{sh}}}
\newcommand{\colim}{\operatornamewithlimits{colim}}
\newcommand{\holim}{\operatornamewithlimits{holim}}
\newcommand{\id}{\operatorname{id}}
\newcommand{\pr}{\operatorname{pr}}
\newcommand{\tr}{\operatorname{tr}}
\newcommand{\red}{\operatorname{red}}
\journalname{Mathematische Annalen}
\begin{document}

\title{On the vanishing of negative $K$-groups\thanks{The first author
was supported in part by NSF Grant No.~0901021 and by the JSPS. The
second author received partial support from NSF Grant No.~0306519.}}


\author{Thomas Geisser \and Lars Hesselholt}


\institute{Thomas Geisser \at
University of Southern California, Los Angeles, California\\
\email{geisser@usc.edu}
\and
Lars Hesselholt \at
Nagoya University, Nagoya, Japan \\
\email{larsh@math.nagoya-u.ac.jp}
}

\date{}

\maketitle

\begin{abstract}
We show that for a $d$-dimensional scheme $X$ essentially of finite
type over an infinite perfect field $k$ of characteristic $p > 0$, 
the negative $K$-groups $K_q(X)$ vanish for $q < - d$ provided that
strong resolution of singularities holds over the field $k$.
\keywords{Negative $K$-groups \and topological cyclic homology \and
  $\cdh$-topology}
\subclass{19D35 \and 14F20 \and 19D55}
\end{abstract}

\section*{Introduction}

A conjecture of Weibel~\cite[Question~2.9]{weibel3} predicts that for 
every noetherian scheme $X$, the negative $K$-groups
$K_q(X)$ vanish for $q < - \dim(X)$. It was proved recently by
Corti\~{n}as, Haesemeyer, Schlichting, and
Weibel~\cite[Theorem~6.2]{cortinashaesemeyerschlichtingweibel} that the
conjecture holds if $X$ is essentially of finite type over a field of
characteristic $0$. In this paper, we prove similarly that the
conjecture holds if $X$ is essentially of finite type over an infinite
perfect field $k$ of characteristic $p > 0$ provided that strong
resolution of singularities holds over $k$. The proofs are by
comparison with Connes' cyclic homology~\cite{loday} and the
topological cyclic homology of
B\"{o}kstedt, Hsiang, and Madsen~\cite{bokstedthsiangmadsen},
respectively.

We say that strong resolution of singularities holds over $k$ if for
every integral scheme $X$ separated and of finite type over $k$, there
exists a sequence of blow-ups
$$X_r \to X_{r-1} \to \dots \to X_1 \to X_0 = X$$
such that the reduced scheme $X_r^{\red}$ is smooth over
$k$; the center $Y_i$ of the blow-up $X_{i+1} \to X_i$ is connected
and smooth over $k$; the closed embedding of $Y_i$ in $X_i$ is
normally flat; and $Y_i$ is nowhere dense in $X_i$. Strong resolution
of singularities holds over fields of characteristic zero by
Hironaka~\cite[Theorem~1*]{hironaka}. In general, the field $k$ must
necessarily be perfect. We say that a scheme is essentially of finite
type over $k$ if it can be covered by finitely many affine open
subsets of the form $\Spec S^{-1}A$ with $A$ a finitely generated 
$k$-algebra and $S \subset A$ a multiplicative subset. The following
result was conjectured by Weibel~\cite[Question~2.9]{weibel3}: 

\begin{bigthm}\label{vanishing}Let $k$ be an infinite perfect field of
characteristic $p > 0$ such that strong resolution of singularities
holds over $k$, and let $X$ be a $d$-dimensional scheme essentially of
finite type over $k$. Then $K_q(X)$ vanishes for $q < -d$.
\end{bigthm}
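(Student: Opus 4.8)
\emph{Strategy and reduction to $\TC$.} The plan is to imitate the characteristic-zero argument of Corti\~{n}as--Haesemeyer--Schlichting--Weibel, with Connes' cyclic homology replaced by topological cyclic homology and the Chern character replaced by the cyclotomic trace. Write $\mathcal{F} = \operatorname{fib}(K \to \TC(-;p))$ for the fibre of the cyclotomic trace into the $p$-typical topological cyclic homology; for schemes essentially of finite type over $\Fp$ this is the appropriate ``infinitesimal'' $K$-theory, $\THH$ being $p$-local above degree zero. The functor $\mathcal{F}$ sends Milnor squares of rings to homotopy cartesian squares, by the Geisser--Hesselholt bi-relative theorem comparing bi-relative $K$-theory and bi-relative topological cyclic homology; it sends blow-up squares along regularly embedded smooth centres to homotopy cartesian squares, because $K$ and $\TC(-;p)$ both satisfy the projective bundle formula; and it satisfies Nisnevich descent, as do $K$ and $\TC(-;p)$. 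Feeding these into Haesemeyer's cdh-descent criterion, and using strong resolution of singularities over $k$ to present an arbitrary abstract blow-up square, Zariski-locally, out of Milnor squares and blow-ups along smooth centres (the normally flat centres of a resolution being handled by deformation to the normal cone), one obtains that $\mathcal{F}$ satisfies cdh descent. Then $\mathcal{F} \simeq \mathcal{F}^{\cdh}$, so the square with corners $K$, $\TC(-;p)$, $K^{\cdh}$, $\TC(-;p)^{\cdh}$ is homotopy cartesian, whence
\[
  \operatorname{fib}\bigl(K(X) \to K^{\cdh}(X)\bigr) \;\simeq\; \operatorname{fib}\bigl(\TC(X;p) \to \TC(X;p)^{\cdh}\bigr).
\]
Moreover $K^{\cdh} \simeq KH$ (homotopy $K$-theory satisfies cdh descent and agrees with $K$ on regular schemes), and $KH_q(X) = 0$ for $q < -d$, since $a_{\cdh}\pi_t K = 0$ for $t < 0$ and $\operatorname{cd}_{\cdh}(X) \le d$. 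By the fibre sequence $\operatorname{fib}(K \to K^{\cdh}) \to K \to K^{\cdh}$ it therefore suffices to prove that $\pi_q\operatorname{fib}(\TC(X;p) \to \TC(X;p)^{\cdh})$ vanishes for $q < -d$.

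\emph{The estimate on $\TC$.} I would control this fibre through the de Rham--Witt complex. One has $\TC(-;p) = \operatorname{fib}\bigl(\TR(-;p) \xrightarrow{F-1} \TR(-;p)\bigr)$ with $\TR(-;p) = \holim_m \TR^m(-;p)$ and $\TR^m(-;p) = \THH(-)^{C_{p^{m-1}}}$ a connective presheaf of spectra. By Hesselholt--Madsen, for a smooth $k$-scheme $U$ the homotopy groups of $\TR^m(U;p)$ are built, with degree shifts, from the de Rham--Witt groups $W_r\Omega^j_U$, which vanish for $j > \dim U$. Since every scheme is cdh-locally smooth, the homotopy sheaves of $\TR^m(-;p)$ and of its cdh-sheafification are assembled from (cdh-sheafified) quasi-coherent sheaves sitting in form-degrees $\le d$; Grothendieck vanishing together with $\operatorname{cd}_{\cdh}(X) \le d$ then forces $\pi_q\TR^m(X;p) = 0 = \pi_q\TR^m(X;p)^{\cdh}$ for $q < -d$. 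Passing to the homotopy limit over $m$ and then to the fibre of $F - 1$, and chasing the resulting long exact sequences, yields the required vanishing of $\pi_q\operatorname{fib}(\TC(X;p) \to \TC(X;p)^{\cdh})$ for $q < -d$; combined with $KH_q(X) = 0$ this gives $K_q(X) = 0$ for $q < -d$.

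\emph{The main obstacle.} The delicate point is the very last step. The fibre sequences presenting $\TC(-;p)$ can, a priori, contribute to $\pi_{-d-1}$ a term coming from $\pi_{-1}\TC(-;p) = \operatorname{coker}\bigl(F - 1\colon W(\mathcal{O}) \to W(\mathcal{O})\bigr)$, and ruling this out amounts to showing that $\TC(X;p) \to \TC(X;p)^{\cdh}$ is an isomorphism on $\pi_{-d-1}$ and a surjection on $\pi_{-d}$. In characteristic zero this border case disappears for free, since Goodwillie's theorem relates relative $K$-theory to relative cyclic homology with a shift of one and cyclic homology is connective; in characteristic $p$ there is no such shift, so one must instead analyse the cdh-sheafification of the Artin--Schreier--Witt presheaf $U \mapsto \operatorname{coker}(F - 1\colon W(\mathcal{O}_U) \to W(\mathcal{O}_U))$ and its top cdh-cohomology, using the finer properties of the de Rham--Witt complex over the perfect field $k$ together with resolution. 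A second, more structural, difficulty is that establishing cdh descent for $\mathcal{F}$ in the first step really depends on the bi-relative comparison theorem and on controlling the blow-ups along the normally flat---but in general not regularly embedded---centres occurring in a resolution; this is exactly where the resolution hypothesis on $k$ is indispensable.
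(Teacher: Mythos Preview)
Your overall architecture is the same as the paper's: show that the fibre of the cyclotomic trace satisfies $\cdh$-descent (the paper's Theorem~B), then show separately that $\TC$ and its $\cdh$-hypercohomology agree in degrees $q < -d$ (the paper's Theorem~C), and finish with the $\cdh$-cohomological-dimension bound for $K^{\cdh}$. You have also located the genuine difficulty correctly: the border contribution at $q = -d$ and $q = -d-1$ coming from the Artin--Schreier--Witt sequence. There are, however, two substantive differences between your sketch and what the paper actually does, and both matter.

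\emph{Pro-spectra versus spectra.} You work with the spectrum $\TC(-;p) = \holim_n \TC^n(-;p)$; the paper works throughout with the pro-spectrum $\{\TC^n(-;p)\}$ and the level-wise fibre $\{F^n(-)\}$. This is not cosmetic. The Geisser--Hesselholt results you invoke for nil-invariance and Milnor excision are pro-statements: they assert that $\{F^n\}$, not any individual $F^n$ nor the limit $\mathcal{F}$, takes infinitesimal thickenings and finite abstract blow-up squares to equivalences, and the paper's generalisation of Haesemeyer's criterion (Theorem~1.1 here) is accordingly formulated for presheaves of pro-spectra. More seriously, the paper explicitly remarks that it is \emph{not known} whether the analogue of Theorem~C holds for the presheaf of spectra $\TC(-;p)$ in place of the pro-system $\{\TC^n(-;p)\}$. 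The reason the pro-structure helps is already visible in your Artin--Schreier--Witt analysis: at level $n$ the relevant map is $R - F \colon W_n(\mathscr{O}) \to W_{n-1}(\mathscr{O})$, but as a map of pro-objects this is identified with $\id - F$ on $\{W_n(\mathscr{O})\}$, and it is only the latter that is surjective on strictly henselian stalks (Lemma~4.2). Passing to the homotopy limit over $n$ would introduce $\lim^1$ terms that the paper never has to confront.

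\emph{The $\eh$-topology.} Your treatment of the border case is where the proposal stops short; you say one must analyse the $\cdh$-sheafification of the Artin--Schreier--Witt presheaf but do not carry this out. The paper's device here is to replace the $\cdh$-topology by the finer $\eh$-topology (generated by $\cdh$- and \'etale coverings). This costs nothing on the $\TC$ side because $\TR^n(-;p)$ satisfies \'etale descent, but it buys a great deal: the \'etale sheaves of pro-abelian groups $\{a_{\et}\TC_q^n(-;p)\}$ are zero for $q < 0$ and equal to $\{\Z/p^n\Z\}$ for $q = 0$ (Proposition~4.4), so the entire question reduces to an $\eh$-$p$-cohomological-dimension estimate (Theorem~D, giving $\leqslant d+1$) together with a surjectivity statement in top degree for the de~Rham--Witt sheaves (Theorem~3.5). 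Without the passage to \'etale sheaves, the Artin--Schreier sequence is not short exact on stalks and your proposed analysis of $\operatorname{coker}(F-1)$ on $\cdh$-stalks does not get started.

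In short: your outline is correct and your diagnosis of the obstacle is accurate, but to turn it into a proof you must (i) work with pro-spectra from the outset, and (ii) pass to the $\eh$-topology to exploit the \'etale-local structure of $\{\TC^n\}$.
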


In general, the group $K_{-d}(X)$ is non-zero. For instance, by closed
Mayer-Vietoris, the group $K_{-d}(\partial \Delta_k^{d+1})$ is readily
seen to be an infinite cyclic group. Therefore, the vanishing
result above is optimal. We further show in Theorem~\ref{finitetype}
below that, under the assumption that strong resolution of
singularities holds over all infinite perfect fields of characteristic
$p > 0$, the conclusion of Theorem~\ref{vanishing} is true for any
scheme $X$ of finite type over any field of characteristic $p$.

To prove Theorem~\ref{vanishing}, we consider the cyclotomic trace map
$$\tr \colon K(X) \to \{ \TC^n(X;p) \}$$
from the non-connective Bass complete $K$-theory spectrum of $X$ to
the topological cyclic homology pro-spectrum of $X$ and define the
pro-spectrum $\{ F^n(X) \}$ to be the level-wise mapping fiber;
compare~\cite[Section~1]{gh5}. Then $\{ F^n(-) \}$ defines a presheaf of
pro-spectra on the category $\Sch/X$ of schemes separated and of
finite type over $X$. We first show that, in the situation of
Theorem~\ref{vanishing}, this presheaf satisfies descent with respect
to the $\cdh$-topology of Voevodsky~\cite{suslinvoevodsky}.

\begin{bigthm}\label{haesemeyer}Let $k$ be an infinite perfect field
of positive characteristic $p$ such that strong resolution of
singularities holds over $k$, and let $X$ be a scheme essentially of
finite type over $k$. Then for all integers $q$, the canonical map
$$\{ F_q^n(X) \} \to
\{ \mathbb{H}_{\cdh}^{-q}(X, F^n(-)) \}$$
is an isomorphism of pro-abelian groups.
\end{bigthm}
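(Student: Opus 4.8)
The plan is to establish the pro-spectrum analogue of Haesemeyer's descent theorem and then verify its hypotheses for the pro-presheaf $\{F^n(-)\}$. By Voevodsky's theory of cd-structures, the $\cdh$-topology on $\Sch/X$ is generated by the Nisnevich cd-structure together with the ``abstract blow-up'' cd-structure, both of which are bounded, complete, and regular. As in the proof of Haesemeyer's theorem, it follows that a presheaf of pro-spectra on $\Sch/X$ satisfies $\cdh$-descent --- equivalently, the canonical map from its value on $X$ to its $\cdh$-hypercohomology is an isomorphism of pro-spectra --- as soon as it (i) satisfies Nisnevich descent and (ii) carries every abstract blow-up square to a pro-homotopy cartesian square, meaning a square whose total homotopy fiber pro-spectrum has trivial pro-homotopy groups. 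Thus the theorem reduces to verifying (i) and (ii) for $\{F^n(-)\}$. One first reduces to the case that $X$ is of finite type over $k$: both $\{F_q^n(-)\}$ and the $\cdh$-hypercohomology commute with the filtered colimits of rings defining the localizations, because $K$-theory (Thomason--Trobaugh) and topological cyclic homology commute with filtered colimits of rings, and the $\cdh$-topology is finitary.

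Property (i) is immediate. The non-connective $K$-theory presheaf satisfies Nisnevich descent by Thomason--Trobaugh, and each presheaf $\TC^n(-;p)$ satisfies Nisnevich, indeed \'{e}tale, descent by the construction of topological cyclic homology for schemes; hence the level-wise mapping fibers $F^n(-)$, and therefore the pro-presheaf $\{F^n(-)\}$, satisfy Nisnevich descent.

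For property (ii), consider an abstract blow-up square with closed immersion $i\colon Y\to X$, proper map $p\colon X'\to X$ restricting to an isomorphism over $X\smallsetminus Y$, and exceptional fiber $E=X'\times_X Y$. The guiding principle is that neither $K(-)$ nor the individual $\TC^n(-;p)$ carries such a square to a cartesian square, but the bi-relative obstruction is the \emph{same} for $K$-theory and for the pro-system $\{\TC^n(-;p)\}$, so that it cancels in the pro-presheaf of mapping fibers. I would implement this using two inputs. First, by McCarthy's theorem that the cyclotomic trace is an equivalence on the relative terms of nilpotent extensions, the pro-presheaf $\{F^n(-)\}$ is invariant under infinitesimal thickenings, so one may pass freely to reduced schemes. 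Second, by the Geisser--Hesselholt theorem that bi-relative $K$-theory agrees with bi-relative topological cyclic homology for a Milnor square, $\{F^n(-)\}$ carries any Milnor square to a pro-homotopy cartesian square. Strong resolution of singularities over $k$ enters to reduce the general abstract blow-up square, by induction on $\dim X$ along the tower of blow-ups with smooth connected normally flat nowhere dense centers that resolves $X$, to two tractable types: Milnor squares (handled by the two inputs above), and the blow-up square of a smooth scheme along a smooth closed center. For the latter, the square is already homotopy cartesian for $K$-theory by Thomason's blow-up formula, and for each $\TC^n(-;p)$ by the corresponding projective bundle and blow-up formulas for topological Hochschild homology and its fixed-point spectra; hence it is homotopy cartesian for every $F^n(-)$.

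The main obstacle is this d\'{e}vissage: organizing an arbitrary abstract blow-up square --- with $X$ and $Y$ singular and $i$ far from a regular immersion --- into squares of the two tractable types uniformly in the pro-index $n$, so that the excision input, the nil-invariance input, and the smooth blow-up formulas can be applied level-wise and reassembled into a single pro-homotopy cartesian square. This demands a careful Mayer--Vietoris and localization analysis along the resolution tower, tracking the exceptional divisors introduced at each stage, all carried out in the pro-category; it also requires establishing the blow-up formula for $\{\TC^n(-;p)\}$ in the needed generality of possibly singular schemes, a further technical point. Granting this, the pro-version of the cd-structure descent criterion yields the asserted isomorphism $\{F_q^n(X)\}\to\{\mathbb{H}_{\cdh}^{-q}(X,F^n(-))\}$ of pro-abelian groups for all integers $q$.
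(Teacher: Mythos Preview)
Your overall strategy---reduce to a descent criterion and verify its hypotheses---matches the paper's, and you correctly identify most of the key inputs (Nisnevich descent via Thomason--Trobaugh and \cite{gh}, nil-invariance via \cite{gh5}, excision for Milnor/finite abstract blow-up squares via \cite{gh5}, and the blow-up formula via Thomason and Blumberg--Mandell). But the d\'evissage you sketch has a genuine gap.

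You propose to reduce an arbitrary abstract blow-up square, via the resolution tower, to two tractable types: Milnor squares and ``the blow-up square of a smooth scheme along a smooth closed center.'' The problem is that the blow-ups appearing in strong resolution are of \emph{singular} schemes $X_i$ along smooth, normally flat centers $Y_i$; the ambient scheme is not smooth until the final stage, and the center is not regularly embedded. So neither of your tractable types applies to these intermediate squares, and Thomason's and Blumberg--Mandell's theorems, as stated, do not cover them. The paper's proof (Theorem~\ref{haesemeyertheorem}) handles exactly this point by the Haesemeyer/CHSW ``reduction'' trick: locally one replaces the normally flat center $D\subset X$ by an infinitesimal thickening $\tilde D$ that \emph{is} regularly embedded, and then factors the blow-up along $D$ through the blow-up along $\tilde D$ (a regular blow-up) and a finite abstract blow-up, together with infinitesimal thickenings on the exceptional loci. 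This is precisely what converts the resolution tower into squares of the types one can actually check.

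Relatedly, the paper does not verify that $\{F^n(-)\}$ takes \emph{all} abstract blow-up squares to homotopy cartesian squares (which is essentially equivalent to $\cdh$-descent once Nisnevich descent is known). Instead it proves a sharper criterion: under strong resolution over $k$, $\cdh$-descent follows from Nisnevich descent, nil-invariance (for the pro-presheaf), descent for \emph{finite} abstract blow-up squares (for the pro-presheaf), and descent for blow-ups along \emph{regular} embeddings (for each $F^n$ individually). Your Voevodsky cd-structure reduction is correct but leaves you with the harder verification; the paper's criterion is what makes the verification feasible with the available inputs.
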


We remark that if the dimension of $X$ is zero then
Theorem~\ref{haesemeyer} reduces to the statement that the map
$\{ F_q^n(X) \} \to \{ F_q^n(X^{\red}) \}$ induced by the canonical
inclusion is an isomorphism of pro-abelian groups. This statement, in
turn, is a special case of the general fact that $\{ F_q^n(-) \}$ is
invariant with respect to nilpotent extensions of unital associative
$\Fp$-algebras~\cite[Theorem~B]{gh5}. It would be very interesting to
similarly extend Theorem~\ref{haesemeyer} to a statement valid for all
unital associative $\Fp$-algebras.

To prove Theorem~\ref{haesemeyer} we generalize of a theorem
of Corti\~{n}as, Haesemeyer, Schlichting, and
Weibel~\cite[Theorem~3.12]{cortinashaesemeyerschlichtingweibel} to
show that if strong resolution of singularities holds over the
infinite perfect field $k$, then a presheaf of pro-spectra satisfies
$\cdh$-descent for schemes essentially of finite type over $k$,
provided that it takes infinitesimal thinkenings to weak equivalences
and finite abstract blow-up squares to homotopy cartesian squares, and
provided further that the individual presheaves of spectra satisfy
Nisnevich descent and take squares defined by blow-ups along regular
embeddings to homotopy cartesian squares. The presheaf $\{ F^n(-) \}$
satisfies all four properties according to theorems of
Thomason~\cite[Theorem~2.1]{thomason} 
and~\cite[Theorem~10.8]{thomasontrobaugh}, Blumberg and
Mandell~\cite[Theorem~1.4]{blumbergmandell}, and the
authors~\cite[Theorem~B and~D]{gh5}. Hence, Theorem~\ref{haesemeyer}
follows.

In the situation of Theorem~\ref{vanishing}, every $\cdh$-covering of $X$
admits a refinement to a $\cdh$-covering by schemes essentially smooth
over $k$. Together with a cohomological dimension result of Suslin and
Voevodsky~\cite[Theorem~12.5]{suslinvoevodsky} this shows that the
group $\mathbb{H}_{\cdh}^{-q}(X,K(-))$ vanishes for $q <
-d$. Therefore, in view of Theorem~\ref{haesemeyer}, to prove
Theorem~\ref{vanishing}, it suffices to prove the following result.

\begin{bigthm}\label{tctheorem}Let $k$ be a perfect field of positive
characteristic $p$ such that resolution of singularities holds over
$k$, and let $X$ be a $d$-dimensional scheme essentially of finite
type over $k$. Then the canonical map
$$\{ \TC_q^n(X;p) \} \to 
\{ \mathbb{H}_{\cdh}^{-q}(X, \TC^n(-;p)) \}$$
is an isomorphism of pro-abelian groups for $q < -d$, and an
epimorphism of pro-abelian groups for $q = -d$.
\end{bigthm}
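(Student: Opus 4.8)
The plan is to combine the connectivity of the spectra $\TC^n(-;p)$ with the bounded cohomological dimension of the $\cdh$-topology to reduce the statement to a connectivity assertion about the homotopy fibre of the comparison map, and then to prove that assertion by a resolution-of-singularities induction in the style of Corti\~nas--Haesemeyer--Schlichting--Weibel~\cite[Theorem~3.12]{cortinashaesemeyerschlichtingweibel}.

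First I would record the basic input. For every ring $A$ the spectrum $\TR^n(A;p) = \THH(A)^{C_{p^{n-1}}}$ is connective, being the genuine fixed points of the connective cyclotomic spectrum $\THH(A)$; hence $\TC^n(A;p) = \operatorname{fib}(R - F\colon \TR^n(A;p) \to \TR^{n-1}(A;p))$ is $(-1)$-connective, with $\TC_{-1}^n(A;p) = \operatorname{coker}(R - F\colon W_n(A) \to W_{n-1}(A))$. Consequently the $\Nis$- and $\cdh$-sheafifications of the homotopy presheaves $\pi_q\TC^n(-;p)$ vanish for $q \le -2$. Now $X$ has $\Nis$-cohomological dimension at most $d$, and, since every $\cdh$-covering of $X$ is refined by one consisting of essentially smooth $k$-schemes, also $\cdh$-cohomological dimension at most $d$ by \cite[Theorem~12.5]{suslinvoevodsky}. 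The $\Nis$- and $\cdh$-hyperdescent spectral sequences then converge and show that both $\{\TC^n(X;p)\}$ and $\{\mathbb{H}_{\cdh}(X,\TC^n(-;p))\}$ are $(-d-1)$-connective pro-spectra. So the comparison map is automatically an isomorphism for $q \le -d-2$, and what remains is to show that its homotopy fibre pro-spectrum $\{C^n(X)\}$ is $(-d)$-connective, where $C^n(-) = \operatorname{fib}\bigl(\TC^n(-;p) \to L_{\cdh}\TC^n(-;p)\bigr)$ is the fibre of the map to the $\cdh$-local replacement, so that $C^n(X) = \operatorname{fib}(\TC^n(X;p) \to \mathbb{H}_{\cdh}(X,\TC^n(-;p)))$; a short diagram chase in the resulting long exact sequence shows that $(-d)$-connectivity of $\{C^n(X)\}$ is exactly the asserted isomorphism for $q < -d$ together with the epimorphism for $q = -d$.

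The presheaf $\{C^n(-)\}$ of pro-spectra satisfies $\Nis$-descent and carries blow-up squares along regular embeddings to homotopy cartesian squares: for $\TC^n(-;p)$ this follows from Thomason~\cite[Theorem~2.1]{thomason}, \cite[Theorem~10.8]{thomasontrobaugh}, Blumberg--Mandell~\cite[Theorem~1.4]{blumbergmandell} and the authors~\cite[Theorem~D]{gh5}, and $L_{\cdh}\TC^n(-;p)$ does so because such a square is a $\cdh$-cover. Moreover $\{C^n(-)\}$ vanishes on essentially smooth $k$-schemes $U$: on $\Sm/U$ the $\cdh$-topology is generated by $\Nis$-coverings and smooth blow-up squares, so the resolution criterion used to prove Theorem~\ref{haesemeyer} shows that $\TC^n(-;p)$ satisfies $\cdh$-descent there, whence $\TC^n(U;p) \to \mathbb{H}_{\cdh}(U,\TC^n(-;p))$ is an equivalence. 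I would then induct on $d = \dim X$, with the length of the strong-resolution tower as secondary parameter. For $d = 0$ one has $\{C^n(X)\} \simeq \operatorname{fib}\bigl(\{\TC^n(X;p)\} \to \{\TC^n(X^{\red};p)\}\bigr)$, since $X^{\red}$ is smooth over $k$ and $\mathbb{H}_{\cdh}$ only sees the reduction, and this is $0$-connective by the nil-invariance of $\{F^n(-)\}$ from \cite[Theorem~B]{gh5} together with the vanishing of negative $K$-theory and the behaviour in degrees $0$ and $1$ of the $K$-theory of Artinian rings. For $d \ge 1$, choose by strong resolution a blow-up square with connected smooth centre $Z$ that is normally flat and nowhere dense in $X$ and with $X'$ one step closer to smoothness; normal flatness makes the normal cone a vector bundle, so the exceptional divisor $E \to Z$ is a projective bundle and $\dim Z < d$, $\dim E = d-1$. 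This square is carried to a homotopy cartesian square of pro-spectra by $\TC^n(-;p)$, hence (as $L_{\cdh}\TC^n(-;p)$ does so, the square being a $\cdh$-cover square) by $\{C^n(-)\}$, so $\{C^n(X)\} \simeq \operatorname{fib}\bigl(\{C^n(X')\} \oplus \{C^n(Z)\} \to \{C^n(E)\}\bigr)$, which is $(-d)$-connective by the inductive hypotheses applied to $Z$ and $E$ (smaller dimension) and to $X'$ (same dimension, closer to smoothness). This completes the induction.

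I expect the main obstacle to be the claim that $\TC^n(-;p)$ carries the blow-up squares arising in strong resolution to homotopy cartesian squares of pro-spectra: the centre $Z$ is smooth but in general not regularly embedded in the singular scheme $X$, so Thomason's blow-up formula does not apply directly, and one must either extract a version valid under the normal-flatness hypothesis -- exploiting that the exceptional divisor is a projective bundle over $Z$ -- or separate the verification into the $K$-theory part (Thomason) and the pro-fibre $\{F^n(-)\}$ part (the blow-up and excision results of \cite{gh5}) and reassemble via the fibre sequence $\{F^n(-)\} \to K(-) \to \{\TC^n(-;p)\}$. The remaining work is the routine but careful handling of convergence and Mittag--Leffler behaviour of the pro-systems of hyperdescent spectral sequences and of the secondary induction on the resolution tower.
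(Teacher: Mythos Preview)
Your approach is genuinely different from the paper's, and the point you yourself flag as the main obstacle is in fact a real gap that your suggested workarounds do not close.

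The paper never tries to show that $\{\TC^n(-;p)\}$ (equivalently, your fibre $\{C^n(-)\}$) takes the blow-up squares of strong resolution to homotopy cartesian squares. Instead it replaces the $\cdh$-topology by the $\eh$-topology (Theorem~\ref{hyperchangeoftopology}), using that the homotopy sheaves of $\TR^n(-;p)$ are quasi-coherent $W_n(\mathscr{O}_X)$-modules, and then analyses the comparison map via the descent spectral sequences for $\et$ and $\eh$. The input on the $E^2$-page is sheaf-theoretic: the pro-sheaves $\{a_{\et}\TC_q^n(-;p)\}$ vanish for $q<0$ and equal $\{\Z/p^n\Z\}$ for $q=0$ (Proposition~\ref{tcvanishing}), and for $q=1$ one proves surjectivity of the change-of-topology map in top degree by a direct argument with de~Rham-Witt sheaves (Theorems~\ref{drwsheavestheorem} and~\ref{tcsheavestheorem}), combined with the $\eh$ $p$-cohomological-dimension bound of Theorem~\ref{ehpcdtheorem}. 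No blow-up formula for $\TC^n$ beyond the regular-embedding case is invoked.

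Your proposed fix via the fibre sequence $\{F^n(-)\} \to K(-) \to \{\TC^n(-;p)\}$ does not work. The Haesemeyer-type reduction from a smooth normally flat centre $D$ to a regularly embedded thickening $\tilde D$ (as in the proof of Theorem~\ref{haesemeyertheorem}) requires the functor in question to be invariant under infinitesimal thickenings and to take \emph{finite} abstract blow-up squares to homotopy cartesian squares. These hold for $\{F^n(-)\}$ by~\cite[Theorems~B and~D]{gh5}, but neither $K(-)$ nor $\{\TC^n(-;p)\}$ is nil-invariant, and $K(-)$ does not satisfy closed Mayer-Vietoris. So the ``$K$-theory part'' of your decomposition fails precisely at the step where Thomason's regular-embedding theorem would need to be upgraded, and you cannot recover the result for $\TC^n$ from $K$ and $F^n$ separately. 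Your alternative suggestion, to exploit that the exceptional divisor is a projective bundle, gives a decomposition of $\TC^n(E)$ but not the required relation between $\TC^n(X')$ and $\TC^n(X)$; that relation is exactly the missing blow-up formula.

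A secondary issue: Theorem~\ref{tctheorem} is stated under the hypothesis that (ordinary) resolution of singularities holds over $k$, whereas your induction uses strong resolution. Even if the gap above were filled, your argument would prove a weaker statement than the one claimed.
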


We note that Theorem~\ref{tctheorem} uses the weaker assumption
that resolution of singularities holds over $k$: Every integral
$k$-scheme separated and of finite type admits a proper bi-rational
morphism $p \colon X' \to X$ from a smooth $k$-scheme. We expect the
map in the statement of Theorem~\ref{tctheorem} to be an isomorphism
of pro-abelian groups for  $q \leqslant -d$, and an epimorphism for $q
= -d+1$.

To prove Theorem~\ref{tctheorem}, we take advantage of the fact that
topological cyclic homology, as opposed to $K$-theory, satisfies
\'{e}tale descent. This implies that, to prove Theorem~\ref{tctheorem},
we may replace the $\cdh$-topology by the finer $\eh$-topology
defined in~\cite[Definition~2.1]{geisser2} to be the smallest
Grothendieck topology on $\Sch/X$ for which both \'{e}tale and
$\cdh$-coverings are coverings. The proof of Theorem~\ref{tctheorem} is
then completed by a careful cohomological analysis of the
$\eh$-sheaves $a_{\eh}\TC_q^n(-;p)$ associated with the presheaves of
homotopy groups $\TC_q^n(-;p) = \pi_q\TC^n(-;p)$ in combination with
the following cohomological dimension result.

\begin{bigthm}\label{ehpcdtheorem}Let $X$ be a scheme essentially of
finite type over a field $k$ of characteristic $p > 0$. Then
the $p$-cohomological dimension of $X$ with respect to the 
$\eh$-topology is less than or equal to $\dim(X) + 1$.
\end{bigthm}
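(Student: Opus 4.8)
The plan is to deduce the bound from the corresponding, and essentially sharp, bounds available on the two topologies that generate the $\eh$-topology: the bound of Suslin and Voevodsky \cite[Theorem~12.5]{suslinvoevodsky}, which gives $\mathrm{cd}^{\cdh}(Y)\le\dim Y$, and the classical bound on the $p$-primary étale cohomological dimension of a scheme essentially of finite type over a perfect field of characteristic $p$. The common mechanism behind both is the Artin--Schreier sequence combined with the vanishing of the higher cohomology of quasi-coherent sheaves on affine schemes, and the real content of the theorem is that the two contributions interact through a coniveau filtration instead of adding. Before the main argument I would make three standard reductions. Since every $p$-torsion $\eh$-sheaf is the filtered colimit of its $\Z/p^{n}$-module subsheaves, since $\eh$-cohomology commutes with filtered colimits on the noetherian site $\Sch/X$, and since every $\Z/p^{n}$-module sheaf admits a finite filtration whose subquotients are sheaves of $\Z/p$-vector spaces, it is enough to bound $H^{i}_{\eh}(Y,\mathcal{F})$ for $Y$ essentially of finite type over $k$ and $\mathcal{F}$ a sheaf of $\Z/p$-vector spaces. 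As the morphism $Y\times_{k}k^{1/p^{\infty}}\to Y$ is an integral universal homeomorphism, it induces equivalences of $\et$-, $\cdh$- and $\eh$-topoi and preserves dimension, so we may assume $k$ perfect. Finally, $\eh$-cohomology is invariant under nilpotent thickenings, so we may assume $Y$ reduced, and then the smooth locus of $Y$ is dense and open.

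The main step is an induction on $d=\dim Y$. The case $d=0$ is immediate: $Y$ is a finite disjoint union of spectra of finite, hence separable, field extensions of $k$, over each of which every abstract blow-up square is trivial, so that $\eh$- and $\et$-cohomology coincide, and a perfect field of characteristic $p$ has $p$-cohomological dimension at most $1$. For $d>0$ and the constant sheaf $\Z/p$ one argues directly: the Artin--Schreier sequence $0\to\Z/p\to\mathcal{O}_{\eh}\xrightarrow{F-1}\mathcal{O}_{\eh}\to 0$ of $\eh$-sheaves, the identification $R\Gamma_{\eh}(Y,\mathcal{O}_{\eh})\simeq R\Gamma_{\cdh}(Y,\mathcal{O}_{\cdh})$ — valid because adjoining the étale coverings to the $\cdh$-topology does not change the cohomology of the structure sheaf, the higher étale cohomology of a quasi-coherent sheaf being zero — and the bound $\mathrm{cd}^{\cdh}(Y)\le d$ together force $H^{i}_{\eh}(Y,\Z/p)=0$ for $i>d+1$. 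For a general sheaf of $\Z/p$-vector spaces $\mathcal{F}$ I would run a coniveau argument: using the finite stratification of $Y$ by smooth locally closed subschemes $S$ obtained by repeatedly passing to the smooth locus of the reduced complement, one filters $Y$ and $\mathcal{F}$ by codimension so that $H^{*}_{\eh}(Y,\mathcal{F})$ is built out of the étale cohomologies $H^{*}_{\et}(\kappa(x),-)$ of the residue fields of points $x$, each placed in degrees shifted by $\operatorname{codim}_{Y}(\overline{\{x\}})$. The local input for this is an $\eh$-purity statement at the points of the smooth strata: for $x$ lying in such a stratum, with regular local ring $\mathcal{O}_{Y,x}^{h}$ of dimension $c=\operatorname{codim}_{Y}(\overline{\{x\}})$, the local cohomology $H^{q}_{x}(\Spec\mathcal{O}_{Y,x}^{h},-)$ vanishes for $q$ exceeding $c+\mathrm{cd}^{\et}_{p}(\kappa(x))$, because the punctured local scheme is covered by $c$ affine opens and the Artin--Schreier argument bounds the $p$-primary cohomology of each. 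Since $\kappa(x)$ is finitely generated over the perfect field $k$ one has $\mathrm{cd}^{\et}_{p}(\kappa(x))\le\dim\overline{\{x\}}+1$, and because $\dim\overline{\{x\}}+\operatorname{codim}_{Y}(\overline{\{x\}})\le\dim Y$, the $x$-contribution lands in total degree at most $d+1$. Feeding this through the filtration of $\mathcal{F}$ closes the induction.

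The hard part is this purity input in the $\eh$-topology, i.e. the control of $i^{*}Rj_{*}$ for an open immersion $j$ (equivalently, of the local cohomology groups $H^{q}_{x}$), together with the companion bookkeeping ensuring that the extra degree contributed by the Artin--Schreier sequence and the dimension contributed by the $\cdh$-side are governed by a single coniveau filtration rather than added; obtaining a workable coniveau spectral sequence for $\eh$-cohomology from the smooth stratification is where the main work lies. Everything else — the three reductions, the Artin--Schreier computation of $R\Gamma_{\eh}(-,\mathcal{O}_{\eh})$, the Suslin--Voevodsky bound, and the triviality of abstract blow-up squares over a field used in the base case — is either standard or has already been recorded in the introduction.
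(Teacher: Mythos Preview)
Your outline leaves the decisive step unproved, and the mechanism you propose for it does not work as stated. The paper argues quite differently: it does not attempt a coniveau filtration or any purity statement in the $\eh$-topology. Instead it runs the Leray spectral sequence for the morphism of sites $\alpha\colon(\Sch/X)_{\eh}\to(\Sch/X)_{\et}$ and controls the stalks of $R^{t}\alpha_{*}F$. A continuity result (Proposition~\ref{limitprop}) together with a dimension-lowering trick from SGA4~(X,~Lemma~3.3(i)) identifies the stalk at a geometric point of codimension $c$ with a filtered colimit of groups $H^{t}_{\eh}(U',-)$ in which each $U'$ is a $c$-dimensional scheme of finite type over a \emph{separably closed} field. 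One first proves, by exactly this Leray-plus-induction device, the sharper bound $\operatorname{cd}^{\eh}_{p}(X)\le\dim X$ over a separably closed base (Theorem~\ref{ehpcd}); the extra ``$+1$'' in the general case then enters only on the étale side, via the SGA4 bound $\operatorname{cd}^{\et}_{p}(Z)\le\dim Z+1$ for noetherian $\Fp$-schemes applied to the $E_{2}$-page.

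The gap in your proposal is the passage from the constant sheaf to a general $\Z/p$-sheaf. Your Artin--Schreier computation bounds only $H^{*}_{\eh}(-,\Z/p)$; it says nothing about $H^{*}_{\eh}(-,\mathcal{F})$ for an arbitrary $\eh$-sheaf of $\Z/p$-vector spaces, so the clause ``the Artin--Schreier argument bounds the $p$-primary cohomology of each'' affine open of the punctured local scheme is false for general $\mathcal{F}$. Without that, you have no bound on the local terms $H^{q}_{x}$, and the coniveau machine---even granting that one can set it up for the $\eh$-topology, which is itself not obvious since abstract blow-ups are coverings and the henselian local ring no longer governs the local $\eh$-site---has nothing to run on. (The identification $R\Gamma_{\eh}(Y,\mathcal{O}_{\eh})\simeq R\Gamma_{\cdh}(Y,\mathcal{O}_{\cdh})$ you invoke for the constant-sheaf case also requires justification, since $a_{\eh}\mathscr{O}$ is not a priori quasi-coherent.) The paper's route through $R^{t}\alpha_{*}$ avoids all of this: one never needs purity, and the induction hypothesis is applied directly to the $\eh$-cohomology of lower-dimensional schemes over separably closed fields rather than to local cohomology groups.
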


We remark that Theorems~\ref{haesemeyer} and~\ref{tctheorem}
both concern pre-sheaves of pro-spectra. Indeed, we do not know whether
the analog of Theorem~\ref{tctheorem} holds if the pre-sheaf of
pro-spectra $\{ \TC^n(-;p) \}$ is replaced by the pre-sheaf of spectra
$\TC(-;p)$ given by the homotopy limit. Our recent paper~\cite{gh5}
was written primarily with the purpose of proving
Theorem~\ref{haesemeyer} above.

Let $X$ be a noetherian scheme. We define $\Sch/X$ to be the category
of schemes separated and of finite type over $X$ and denote by
$a_{\tau} \colon (\Sch/X)^{\wedge} \to (\Sch/X)_{\tau}^{\sim}$ and
$i_{\tau} \colon (\Sch/X)_{\tau}^{\sim} \to (\Sch/X)^{\wedge}$ the
sheafification functor and the inclusion functor, respectively,
between the categories of presheaves and $\tau$-sheaves of sets. We
further denote by $\alpha \colon (\Sch/X)_{\eh} \to (\Sch/X)_{\et}$ the
canonical morphism of sites.

\section{$\cdh$-descent}\label{haesemeyersection}

In this section, we formulate and prove a generalization
of~\cite[Theorem~3.12]{cortinashaesemeyerschlichtingweibel} to
pre-sheaves of pro-spectra. We apply this theorem to prove
Theorem~\ref{haesemeyer} of the introduction. We first recall some
definitions.

Let $X$ be a noetherian scheme, and let $F(-)$ be a presheaf of
fibrant symmetric spectra on $\Sch/X$. If $\tau$ is a Grothendieck
topology on $\Sch/X$ that has enough points, we define the
hypercohomology spectrum 
$\mathbb{H}_{\tau}^{\boldsymbol{\cdot}}(X,F(-))$ to be the
Godement-Thomason construction~\cite[Definition~1.33]{thomason} of the site
$(\Sch/X)_{\tau}$ with coefficients in the presheaf $F(-)$; see
also~\cite[Section~3.1]{gh}. We recall from~\cite[Proposition~3.1.2]{gh} that,
in this situation, there is a conditionally convergent spectral
sequence
$$E_{s,t}^2 = H_{\tau}^{-s}(X,a_{\tau}F_t(-)) \Rightarrow
\mathbb{H}_{\tau}^{-s-t}(X,F(-))$$
from the sheaf cohomology groups of $X$ with coefficients in the
$\tau$-sheaf on $\Sch/X$ associated with the presheaf $F_t(-) =
\pi_t(F(-))$ and with abutment the homotopy groups
$\mathbb{H}_{\tau}^{-q}(X,F(-)) = \pi_q
\mathbb{H}_{\tau}^{\boldsymbol{\cdot}}(X,F(-))$.

We next recall the $\cdh$-topology on $\Sch/X$
from~\cite{suslinvoevodsky}. We say that the cartesian square of
$X$-schemes
$$\xymatrix{
{ Z' } \ar[r]^{i'} \ar[d]^{p'} &
{ Y' } \ar[d]^{p} \cr
{ Z } \ar[r]^{i} &
{ Y } \cr
}$$
is an~\emph{abstract blow-up square} if $i$ is a closed immersion and
$p$ is a proper map that induces an isomorphism of
$Y' \smallsetminus Z'$ onto $Y \smallsetminus Z$. We say that the
square is a~\emph{finite abstract blow-up square} if it is an abstract
blow-up square and the proper map $p$ is finite. We say that the
square is an elementary Nisnevich square if $i$ is an open immersion
and $p$ is an \'{e}tale map that induces an isomorphism of 
$Y' \smallsetminus Z'$ onto $Y \smallsetminus Z$. We say that 
$i \colon Z \to Y$ is an \emph{infinitesimal thickening} if $i$ is a
closed immersion and the corresponding quasi-coherent ideal
$\mathscr{I} \subset \mathscr{O}_Y$ is nilpotent. The $\cdh$-topology
on $\Sch/X$ is defined to be the smallest Grothendieck topology such
that for every abstract blow-up square and every elementary Nisnevich
square, the family of morphisms
$$\{ p \colon Y' \to Y, i \colon Z \to Y \}$$
is a covering of $Y$. In particular, the closed covering of the scheme
$Y$ by its irreducible components is a $\cdh$-covering
as is the closed immersion $Y^{\red} \to Y$.

For the purpose of this paper, we define a \emph{pro-spectrum} to be a
functor from the partially ordered set of positive integers viewed as
a category with a single morphism from $m$ to $n$, if $n \leqslant m$,
to the category of fibrant symmetric spectra, and we define a strict
map of pro-spectra to be a natural transformation. We define the
strict map of pro-spectra $f \colon \{ X^n \} \to \{ Y^n \}$ to be a
\emph{weak equivalence} if for every integer $q$, the induced map of
homotopy groups
$$f_* \colon \{ \pi_q(X^n) \} \to \{ \pi_q(Y^n) \}$$
is an isomorphism of pro-abelian groups. This, we recall, means that
for every $n$, there exists $m \geqslant n$, such that the maps
induced by the structure maps from the kernel and cokernel of the map
$f_*^m \colon \pi_q(X^m) \to \pi_q(Y^m)$ to the kernel and cokernel,
respectively, of the map $f_*^n \colon \pi_q(X^n) \to \pi_q(Y^n)$ are
both zero. We say that the square diagram of strict maps of
pro-spectra 
$$\xymatrix{
{ \{ X^n \} } \ar[r] \ar[d] &
{ \{ Y^n \} } \ar[d] \cr
{ \{ Z^n \} } \ar[r] &
{ \{ W^n \} } \cr
}$$
is \emph{homotopy cartesian} if the canonical map
$$\{ X^n \} \to \{ \holim( Y^n \to W^n \leftarrow Z^n ) \}$$
is a weak equivalence. 

The following result generalizes~\cite[Theorem~6.4]{haesemeyer}
and~\cite[Theorem~3.12]{cortinashaesemeyerschlichtingweibel}.

\begin{theorem}\label{haesemeyertheorem}Let $k$ be an
infinite perfect field such that strong resolution of singularities
holds over $k$, and let $\{ F^n(-) \}$ be a presheaf of pro-spectra on
the category of schemes essentially of finite type over
$k$. Assume that $\{ F^n(-) \}$ takes infinitesimal thickenings to weak
equivalences and finite abstract blow-up squares to homotopy cartesian
squares. Assume further that each $F^n(-)$ takes elementary Nisnevich
squares and squares associated with blow-ups along regular embeddings
to homotopy cartesian squares. Then the canonical map defines a weak
equivalence
$$\{ F^n(X) \} \xrightarrow{\sim}
\{ \mathbb{H}_{\cdh}^{\boldsymbol{\cdot}}(X,F^n(-)) \}$$
of pro-spectra for every scheme $X$ essentially of finite type over $k$.
\end{theorem}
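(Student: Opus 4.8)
\emph{Overall strategy.} I would follow the argument of \cite[Theorem~3.12]{cortinashaesemeyerschlichtingweibel} and \cite[Theorem~6.4]{haesemeyer}, carrying each step over to presheaves of pro-spectra; recall that for us a square of pro-spectra is homotopy cartesian precisely when its level-wise total homotopy fibre is weakly equivalent to a pro-trivial pro-spectrum. The first step is the descent criterion. The $\cdh$-topology on $\Sch/X$ of \cite{suslinvoevodsky} is associated with the cd-structure whose distinguished squares are the elementary Nisnevich squares and the abstract blow-up squares; this cd-structure is complete, and it is bounded because $X$ has finite Krull dimension---concretely, the $\cdh$-cohomological dimension of $X$ is finite by \cite[Theorem~12.5]{suslinvoevodsky}, so the spectral sequence of \cite[Proposition~3.1.2]{gh} converges. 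Hence the pro-spectrum analogue of Voevodsky's theorem that descent for a bounded complete cd-structure is detected on distinguished squares---which follows from that convergent spectral sequence---reduces the theorem to showing that $\{F^n(-)\}$ sends every elementary Nisnevich square and every abstract blow-up square to a homotopy cartesian square. The former holds level-wise by hypothesis, so only the abstract blow-up squares remain. (Here and below one may work with schemes of finite type over $k$; the essentially-of-finite-type case follows by a routine continuity argument.)

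\emph{Induction on dimension and the standard reductions.} I would argue by induction on $d$, the statement $P(d)$ being that $\{F^n(-)\}$ satisfies $\cdh$-descent on every scheme of dimension $\le d$---equivalently, that it sends to a homotopy cartesian square every abstract blow-up square whose bottom-right corner has dimension $\le d$. Fix such a square with $p\colon Y'\to Y$, $i\colon Z\to Y$ and $\dim Y\le d$. If $\dim Y<d$, all four corners have dimension $<d$; by $P(d-1)$ the natural map identifies $\{F^n(-)\}$ on the square with $\{\mathbb{H}_{\cdh}^{\boldsymbol{\cdot}}(-,F^n(-))\}$ on the square, and the latter is homotopy cartesian because $\cdh$-distinguished squares are sent to homotopy cartesian squares by any $\cdh$-fibrant presheaf. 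Assume $\dim Y=d$. Using that $\{F^n(-)\}$ takes infinitesimal thickenings to weak equivalences I reduce to $Y$ reduced; using that it takes to homotopy cartesian squares the finite abstract blow-up squares $A\amalg B\to Y\leftarrow A\cap B$ attached to closed covers $Y=A\cup B$, an induction on the number of irreducible components of $Y$ reduces to $Y$ integral---the intersection terms have dimension $<d$ and so are handled by $P(d-1)$, and the Mayer--Vietoris fibre sequences assemble the pieces. Finally I bring $p$ into birational position: writing the reduced scheme of $Y'$ as $\bar Y'\cup C$ with $\bar Y'$ the closure of $p^{-1}(Y\smallsetminus Z)$, one checks $C\subseteq p^{-1}(Z)$, so Mayer--Vietoris for the closed covers $Y'=\bar Y'\cup C$ and $p^{-1}(Z)=(\bar Y'\cap p^{-1}(Z))\cup C$ cancels the $C$-terms and shows that the given square is homotopy cartesian if and only if the abstract blow-up square $\bar p^{-1}(Z)\to\bar Y'$, $Z\to Y$ is, where $\bar p\colon\bar Y'\to Y$ is proper birational. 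For $d=0$ these reductions leave only the trivial case $Y=\Spec$ of a field.

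\emph{The crux: strong resolution of singularities.} It remains to treat an abstract blow-up square with $Y$ integral of dimension $d$, $p\colon Y'\to Y$ proper birational, and $Z\subsetneq Y$ closed. I would first establish, granting $P(d-1)$, the auxiliary claim that $\{F^n(-)\}$ sends to a homotopy cartesian square every abstract blow-up square $\pi^{-1}(W)\to Y''$, $W\to Y$ in which $\pi\colon Y''\to Y$ is a composition of blow-ups along connected, smooth, normally flat, nowhere-dense centres. This is an induction on the number of blow-ups: a blow-up along such a centre is a blow-up along a regular embedding (a smooth normally flat closed subscheme is regularly embedded) and is handled level-wise by hypothesis; its centre is nowhere dense, hence of dimension $<d$, and its exceptional divisor is a projective bundle over the centre, hence also of dimension $<d$, so $P(d-1)$ disposes of those two corners; the inductive step is a pasting of total-fibre fibre sequences, using also that enlarging $W$ by a closed subset of dimension $<d$ affects neither the conclusion---by $P(d-1)$ applied to the abstract blow-up square $p^{-1}(W)\to p^{-1}(W')$, $W\to W'$ over $W'$---nor the fact that one still has an abstract blow-up square. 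Granting the claim, the heart of the matter is to \emph{straighten} $p$: using strong resolution of singularities together with its standard consequence that ideal sheaves on smooth schemes may be principalized by blow-ups along smooth centres, one produces a composition of blow-ups along smooth nowhere-dense centres $\tilde Y\to Y$ with $\tilde Y$ smooth which factors through $Y'$, and then, iterating the tower $\tilde Y\to Y'\to Y$ against the auxiliary claim, $P(d-1)$, and the pasting formalism, one reduces the given square to squares already covered.

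\emph{The main obstacle.} I expect this last part---carried out for presheaves of spectra in \cite[\S3]{cortinashaesemeyerschlichtingweibel}---to be the principal difficulty: resolution of singularities does not lower $\dim Y$, so the induction on $d$ does not close in a single stroke and must be organised through a secondary induction measuring the complexity of the proper birational morphism $p$, with every intermediate reduction kept strictly inside the class of abstract blow-up squares and every comparison of birational models reduced, via $P(d-1)$, to the lower-dimensional exceptional loci. The one further point---that all of this is run with pro-spectra in place of spectra---requires no new idea: the needed formalism (level-wise fibre sequences of pro-spectra yielding long exact sequences of homotopy pro-groups, ``two out of three'' for pro-triviality of total fibres, pasting of homotopy cartesian squares of pro-spectra, and the pro-version of the cd-structure descent criterion) is exactly that developed in \cite{gh}, and one simply replaces each weak-equivalence statement about spectra by the corresponding pro-isomorphism statement about homotopy pro-groups. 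Once the abstract blow-up case is established, the descent criterion of the first step yields the asserted weak equivalence $\{F^n(X)\}\xrightarrow{\sim}\{\mathbb{H}_{\cdh}^{\boldsymbol{\cdot}}(X,F^n(-))\}$ for every $X$ essentially of finite type over $k$.
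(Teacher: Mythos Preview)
Your overall architecture---reduce $\cdh$-descent to the statement that $\{F^n(-)\}$ takes every abstract blow-up square to a homotopy cartesian square, then prove that by induction on dimension using strong resolution---is a legitimate alternative to the route the paper takes. The paper instead follows Haesemeyer's original scheme: establish descent for smooth schemes via \cite[Corollary~3.9]{cortinashaesemeyerschlichtingweibel}, then for normal crossing schemes, then prove a Cohen--Macaulay blow-up lemma, then climb hypersurface $\to$ local complete intersection $\to$ general by embedding $X$ in a smooth scheme and cutting out an LCI containing $X$ as a component. Both strategies are in the literature; yours is closer to the cd-structure viewpoint.

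However, there is a genuine gap in your auxiliary claim. You assert that ``a smooth normally flat closed subscheme is regularly embedded'' and use this to say that each blow-up in the resolution tower is a blow-up along a regular embedding, hence handled level-wise by hypothesis. This is false. Take $Y=\Spec k[x,y]/(y^2-x^3)$ and let the centre be the cusp $C=\Spec k$. Then $C$ is smooth, nowhere dense, and normally flat in $Y$ (each $\mathfrak{m}^n/\mathfrak{m}^{n+1}$ is free of rank two over $k$), but $C$ is \emph{not} regularly embedded: the local ring has embedding dimension $2$ and Krull dimension $1$, so $\mathfrak{m}$ cannot be generated by a regular sequence of length $1$. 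In the resolution tower the ambient schemes $X_i$ are in general singular, and this is exactly the situation you face at the very first step. The hypothesis on $F^n(-)$ only gives you blow-ups along \emph{regular} embeddings, so your induction does not start.

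This is precisely the obstacle that the paper's Cohen--Macaulay step (the analogue of \cite[Theorem~5.7]{haesemeyer}) is designed to overcome: after passing to a Zariski neighbourhood one finds an infinitesimal thickening $\tilde D\supset D$ with $\tilde D$ regularly embedded, and then factors the normally flat blow-up through the regular one using the finite-abstract-blow-up and infinitesimal-thickening hypotheses on the pro-presheaf. That reduction requires the ambient scheme to be Cohen--Macaulay, which is why the paper then routes through hypersurfaces and LCIs rather than attacking the resolution tower directly. If you want to salvage your approach you will need an analogous mechanism.

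A secondary point: you say the passage to pro-spectra ``requires no new idea.'' The paper explicitly disagrees and isolates the issue: for a map of pro-sheaves to be an isomorphism it is \emph{not} enough that it induce pro-isomorphisms on all stalks, because the index $m\geqslant n$ witnessing the pro-isomorphism may vary with the point; one must instead produce, for each point, a Zariski neighbourhood on which the map of pro-groups of sections is already an isomorphism. In the paper this is why the Cohen--Macaulay reduction is carried out on an open neighbourhood rather than at a stalk. Your proposal does not engage with this, and several of your reductions (passing to stalks implicitly via ``continuity,'' invoking a pro-version of the cd-descent criterion) would need to be checked against it.
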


\begin{proof}The proof in outline is analogous to the proof
of~\cite[Theorem~6.4]{haesemeyer}. But some extra care is
needed, since for a map $\{ A^n(-) \} \to \{ B^n(-) \}$ of
pro-objects in the category of sheaves of abelian groups on the
category of schemes essentially of finite type over $k$ to be an
isomorphism, it does not suffice to show that for every such scheme
$X$ and every point $x \in X$, the map $\{ A^n(-)_{X,x} \} \to \{
B^n(-)_{X,x} \}$ of the pro-abelian groups of stalks is an
isomorphism. Instead, one must show that for every scheme $X$ 
essentially of finite type over $k$ and every point $x \in X$, there
exists a Zariski open neighborhood $x \in U \subset X$ such that the
map $\{ A^n(U) \} \to \{ B^n(U) \}$ is an isomorphism of pro-abelian
groups. We point out the necessary changes in the proof of loc.~cit.

First, it follows
from~\cite[Corollary~3.9]{cortinashaesemeyerschlichtingweibel} that
for every scheme $X$ smooth over $k$ and positive integer $n$, 
the canonical map defines a weak equivalence of spectra
$$F^n(X) \xrightarrow{\sim} 
\mathbb{H}_{\cdh}^{\boldsymbol{\cdot}}(X,F^n(-)).$$
In particular, the canonical map defines a weak equivalence of
pro-spectra
$$\{ F^n(X) \} \xrightarrow{\sim} 
\{ \mathbb{H}_{\cdh}^{\boldsymbol{\cdot}}(X,F^n(-)) \}.$$
Now, it follows verbatim from the proof
of~\cite[Proposition~3.12]{haesemeyer} that the statement holds for
every scheme $X$ which is a normal crossing scheme over $k$ in the
sense of loc.~cit., Definition~3.10. 

Next, let $X$ be a Cohen-Macaulay scheme over $k$ and let $D \subset
X$ be an integral subscheme along which $X$ is normally flat. Let
$X'$ be the blow-up of $X$ along $D$ and let $D'$ be the
exceptional fiber. We indicate the changes necessary to the proof 
of op.~cit., Theorem~5.7, in order to show that the square of
pro-spectra
$$\xymatrix{
{ \{ F^n(D') \} } &
{ \{ F^n(X') \} } \ar[l] \cr
{ \{ F^n(D) \} } \ar[u] &
{ \{ F^n(X) \} } \ar[l] \ar[u] \cr
}$$
is homotopy cartesian. It follows from loc.~cit., Proposition~5.4,
that after replacing $X$ by a Zariski open neighborhood of a given
point $x \in X$, there exists a reduction $\tilde{D}$ of $D$ in the
sense of loc.~cit., Definition~5.1, such that $\tilde{D}$ is regularly
embedded in $X$. Let $X_{\tilde{D}}$ be the blow-up of $X$ along
$\tilde{D}$ and let $\tilde{D}'$ be the exceptional fiber. We consider
the following diagram of schemes, where every square is cartesian, and
the induced diagram of pro-spectra.
$$\xymatrix{
{ \,D'\, } \ar@{^{(}->}[r] \ar[dd] &
{ \,\tilde{D}''\, } \ar@{^{(}->}[r] \ar[d] &
{ X' } \ar[d] &
{ \{F^n(D')\} } &
{ \{F^n(\tilde{D}'')\} } \ar[l] &
{ \{F^n(X')\} } \ar[l] \cr
{ } &
{ \,\tilde{D}'\, } \ar@{^{(}->}[r] \ar[d] &
{ X_{\tilde{D}} } \ar[d] &
{ } &
{ \{F^n(\tilde{D}')\} } \ar[u] &
{ \{F^n(X_{\tilde{D}})\} } \ar[l] \ar[u] \cr
{ \,D\, } \ar@{^{(}->}[r] &
{ \,\tilde{D}\, } \ar@{^{(}->}[r] &
{ X } &
{ \{F^n(D)\} } \ar[uu] &
{ \{F^n(\tilde{D})\} } \ar[l] \ar[u] &
{ \{F^n(X)\} } \ar[l] \ar[u] \cr
}$$
The lower right-hand square in the left-hand diagram is a blow-up
along a regular embedding, and therefore, the lower right-hand square
in the right-hand diagram is homotopy cartesian by
assumption. Similarly, the upper right-hand square in the left-hand
diagram is a finite abstract blow-up square, and therefore, the upper
right-hand square in the right-hand diagram also is homotopy
cartesian. Moreover, the left-hand horizontal maps in the left-hand
diagram are infinitesimal thickenings, and therefore, the left-hand
horizontal maps in the right-hand diagram are weak equivalences. It
follows that the outer square in the right-hand diagram is homotopy
cartesian as desired.

Now, it follows verbatim from the proof of op.~cit., Theorem~6.1, that
the theorem holds, if $X$ is a hypersurface in a scheme essentially
smooth over $k$. Similarly, the proof of op.~cit., Corollary~6.2,
shows that the theorem holds, if $X$ is a local complete intersection
in a scheme essentially smooth over $k$.

Finally, in the general case, we proceed as in the proof of op.~cit.,
Theorem~6.4. We may assume that $X$ is integral, since the presheaf
$\{ F^n(-) \}$ is invariant under infinitesimal thickenings and
satisfies descent for finite closed coverings. We now argue by
induction on the dimension $d$ of $X$. The case $d = 0$ follows from
what was proved earlier since $X$ is smooth over $k$, $k$ being
perfect. So we let $d > 0$ and assume the theorem has been proved for
schemes of smaller dimension. Replacing $X$ by an affine open
neighborhood of a given point $x \in X$, we may embed $X$ as a closed
subscheme $X = Z(\mathfrak{p}) \subset U$ of an affine scheme $U =
\Spec A$ essentially smooth over $k$. Since $U$ is regular, there
exists a regular sequence in $\mathfrak{p}$ of length 
$\operatorname{ht}(\mathfrak{p})$. This sequence defines a closed 
subscheme $\tilde{X} \subset U$ which contains $X$ as an irreducible
component and which, after possibly replacing $U$ by a smaller open
neighborhood of $x$, is a local complete intersection. Hence,
the theorem holds for $\tilde{X}$ by what was proved above. Let $X^c
\subset \tilde{X}$ be the union of the components other than $X$. Then
the theorem also holds for the intersection $X \cap X^c$ by
induction. Therefore, it holds for $X$ (and for $X^c$) by the
Mayer-Vietoris sequence associated with the closed covering of
$\tilde{X}$ by $X$ and $X^c$.
\end{proof}

\begin{remark}\label{affinesuffices}In the situation of
Theorem~\ref{haesemeyer}, suppose that $\{ F^n(-) \}$ takes elementary
Nisnevich squares of schemes essentially of finite type over $k$ to
homotopy cartesian squares. Then for every scheme $X$ essentially of
finite type over $k$, the canonical map defines a weak equivalence
$$\{ F^n(X) \} \xrightarrow{\sim} \{
\mathbb{H}_{\Nis}^{\boldsymbol{\cdot}}(X, F^n(-)) \}.$$
Therefore, if $\{ F^n(-) \}$ takes infinitesimal thickenings
(resp.~finite blow-up squares) of affine schemes essentially of finite
type over $k$ to weak equivalences (resp.~homotopy cartesian squares)
then $\{ F^n(-) \}$ takes infinitesimal thickenings (resp.~finite
blow-up squares) of all schemes essentially of finite type over $k$ to
weak equivalences (resp.~homotopy cartesian squares).
\end{remark}

\begin{proof}[Proof of Theorem~\ref{haesemeyer}]We show that the presheaf
of pro-spectra $\{ F^n(-) \}$, where $F^n(X)$ is the mapping fiber of
the cyclotomic trace map
$$\tr \colon K(X) \to \TC^n(X;p),$$
satisfies the hypothesis of Theorem~\ref{haesemeyertheorem}. The functors
$K(-)$ and $\TC^n(-;p)$ both take elementary Nisnevich squares to
homotopy cartesian squares. Indeed, this is proved for $K(-)$
in~\cite[Theorem~10.8]{thomasontrobaugh} and for $\TC^n(-;p)$
in~\cite[Proposition~3.2.1]{gh}. Therefore, the functor $F^n(-)$ takes
elementary Nisnevich squares to homotopy cartesian squares. Next, it
follows from Remark~\ref{affinesuffices} and from~\cite[Theorem~B]{gh5}
that $\{ F^n(-) \}$ takes infinitesimal thickenings to weak equivalences.
Similarly, Remarks~\ref{affinesuffices} and~\cite[Theorem~D]{gh5} show that 
$\{ F^n(-) \}$ takes finite abstract blow-up squares to homotopy
cartesian squares. Finally, two theorems of
Thomason~\cite[Theorem~2.1]{thomason} and
Blumberg and Mandell~\cite[Theorem~1.4]{blumbergmandell} show that the
functors $K(-)$ and $\TC^n(-,;p)$ take squares associated with
blow-ups along regular embeddings to homotopy cartesian
squares. Hence, the same holds for the functor $F^n(-)$. Now,
Theorem~\ref{haesemeyer} follows from Theorem~\ref{haesemeyertheorem}.
\end{proof}

\section{The $\eh$-topology}

Let $X$ be a noetherian scheme. We recall
from~\cite[Definition~2.1]{geisser2} that the $\eh$-topology on the category
$\Sch/X$ of schemes separated and of finite type over $X$ is
defined to be the smallest Grothendieck topology for which both
\'{e}tale coverings and $\cdh$-coverings are coverings. In this
section, we estimate the cohomological dimension of the
$\eh$-topology. Our arguments closely follow 
Suslin-Voevodsky~\cite[Section~12]{suslinvoevodsky}. 

\begin{lemma}\label{reduced}Let $X$ be a noetherian scheme, and let
$F$ be an $\eh$-sheaf of abelian groups on
$\Sch/X$. Then the canonical closed immersion induces an
isomorphism
$$H_{\eh}^*(X,F) \xrightarrow{\sim}
H_{\eh}^*(X^{\red},F).$$
\end{lemma}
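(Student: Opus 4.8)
The plan is to reduce the statement to a statement about the sites themselves, namely that the closed immersion $\iota\colon X^{\red}\to X$ induces an equivalence of the associated $\eh$-topoi, $(\Sch/X^{\red})^{\sim}_{\eh}\simeq (\Sch/X)^{\sim}_{\eh}$, after which the isomorphism on cohomology with coefficients in any $\eh$-sheaf $F$ is formal. First I would recall that $\iota\colon X^{\red}\to X$ is a $\cdh$-covering, hence an $\eh$-covering; more importantly, for any $Y$ in $\Sch/X$ the canonical map $Y^{\red}\to Y$ is also a $\cdh$-covering, so on the level of $\eh$-sheaves there is no difference between a presheaf and its restriction along the "reduction" endofunctor. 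The key structural input is that pullback along $\iota$ gives a functor $\Sch/X\to \Sch/X^{\red}$, $Y\mapsto Y\times_X X^{\red} = Y^{\red}$ (the last equality because $Y\to X$ is of finite type and $X$ is noetherian, so $Y\times_X X^{\red}$ has the same underlying space as $Y$ and is reduced wherever... in fact $Y\times_X X^{\red}=(Y_{\mathrm{red}})$ need not hold on the nose, but the two have the same topological space and the same étale and $\cdh$ covers), and conversely the forgetful functor $\Sch/X^{\red}\to\Sch/X$ via $\iota$. These two functors are mutually inverse up to the operation $Y\rightsquigarrow Y^{\red}$, which, as just noted, is invisible to the $\eh$-topology.

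Concretely, the key steps are: (1) Show that the morphism of sites induced by $\iota$ induces an adjunction $(\iota^*,\iota_*)$ between $\eh$-sheaves on $\Sch/X$ and on $\Sch/X^{\red}$; this is standard once one checks $\iota$ is "continuous and cocontinuous", i.e. sends $\eh$-covers to $\eh$-covers and reflects them — which follows because an $\eh$-cover is built from étale and $\cdh$ covers and base change along a nilimmersion preserves and reflects both (étale morphisms are stable under base change and reflect nothing is lost since $\iota$ is a universal homeomorphism; $\cdh$-covers likewise, as $\cdh$-coverings are detected on underlying reduced schemes). (2) Prove the unit and counit of this adjunction are isomorphisms on $\eh$-sheaves: for a sheaf $F$ on $\Sch/X$ and $Y\in\Sch/X$ one has $(\iota_*\iota^*F)(Y)=F(Y^{\red})$ canonically, and the map $F(Y)\to F(Y^{\red})$ is an isomorphism because $Y^{\red}\to Y$ is a covering for which the Čech nerve is split up to universal homeomorphisms — more simply, because $\{Y^{\red}\to Y\}$ is an $\eh$-cover with $Y^{\red}\times_Y Y^{\red}=Y^{\red}$, the sheaf condition forces $F(Y)\xrightarrow{\sim}F(Y^{\red})$. (3) Conclude that $\iota^*$ is an equivalence of categories of $\eh$-sheaves, hence exact and an equivalence of topoi, so $H^*_{\eh}(X,F)=H^*_{\eh}(X,\iota_*\iota^*F)=H^*_{\eh}(X^{\red},\iota^*F)$, and the composite with the restriction map is the asserted isomorphism. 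Alternatively, one bypasses the topos equivalence: since $F$ is already a sheaf, $H^0_{\eh}(X,F)=F(X)=F(X^{\red})=H^0_{\eh}(X^{\red},\iota^*F)$ by the sheaf condition for the cover $X^{\red}\to X$, and an injective resolution $F\to I^\bullet$ on $\Sch/X$ pulls back to an injective resolution on $\Sch/X^{\red}$ (injectivity being preserved because $\iota^*$ has an exact left adjoint, namely $\iota_!$, by the cocontinuity in step (1)), giving the isomorphism in all degrees.

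The main obstacle — and the reason the proof needs "some care" rather than being a one-line citation — is exactly the subtlety flagged in the proof of Theorem~\ref{haesemeyertheorem}: the $\eh$-topology on $\Sch/X$ does not have enough points in the naive sense adapted to schemes essentially of finite type, so one cannot argue stalk-wise; one must work with the sheaf condition on honest covers or with the topos-theoretic adjunction directly. Concretely, the delicate point is verifying that $\iota^*$ preserves injectives, i.e. that the left adjoint $\iota_!$ (extension by $0$, or rather left Kan extension along the inclusion $\Sch/X^{\red}\hookrightarrow\Sch/X$ followed by $\eh$-sheafification) is exact; this is where the cocontinuity of $\iota$ as a morphism of sites is used, together with the fact that $\eh$-sheafification is exact. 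Once that is in hand, the rest is the formal machinery of morphisms of sites applied to a universal homeomorphism, and the two mutually inverse constructions $Y\mapsto Y\times_X X^{\red}$ and "view a $X^{\red}$-scheme as an $X$-scheme" assemble into the claimed cohomological isomorphism. I would close by remarking that this is the exact analogue, for the $\eh$-topology, of the classical fact that $\iota\colon X^{\red}\to X$ induces an equivalence of small étale topoi, and that Suslin-Voevodsky~\cite[Section~12]{suslinvoevodsky} establish the corresponding statement for the $\cdh$- and $h$-topologies by the same mechanism.
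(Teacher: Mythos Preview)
Your proposal is correct, but it is far more elaborate than what the paper actually does, and your diagnosis of the ``main obstacle'' is off. The paper's proof is three lines: take an injective resolution $F\to I^{\boldsymbol{\cdot}}$ in $\eh$-sheaves on $\Sch/X$; since $i\colon X^{\red}\to X$ is a $\cdh$-covering and hence an $\eh$-covering, each $I^q$ satisfies the equalizer condition $I^q(X)\to I^q(X^{\red})\rightrightarrows I^q(X^{\red}\times_X X^{\red})$; since $i$ is a closed immersion and hence a monomorphism, the diagonal $X^{\red}\to X^{\red}\times_X X^{\red}$ is an isomorphism and the two projections coincide, so $i^*\colon I^{\boldsymbol{\cdot}}(X)\to I^{\boldsymbol{\cdot}}(X^{\red})$ is an isomorphism of complexes. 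That is the entire argument: no change of site, no adjunction $(\iota^*,\iota_*)$, no need to check that $\iota^*$ preserves injectives, and certainly no issue about points of the topos. You actually state the key observation yourself in step~(2) --- ``$\{Y^{\red}\to Y\}$ is an $\eh$-cover with $Y^{\red}\times_Y Y^{\red}=Y^{\red}$, the sheaf condition forces $F(Y)\xrightarrow{\sim}F(Y^{\red})$'' --- but you then embed it in an unnecessary topos-equivalence framework. The advantage of your route is that it yields the stronger statement that the $\eh$-topoi of $X$ and $X^{\red}$ are equivalent; the advantage of the paper's route is that it extracts exactly what is needed for the lemma with no overhead and no subtle verifications.
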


\begin{proof}We choose an injective resolution $F \to
I^{\boldsymbol{\cdot}}$ in the category of $\eh$-sheaves
of abelian groups on $\Sch/X$. Since the closed immersion $i \colon
X^{\red} \to X$ is a $\cdh$-covering, we have the equalizer diagram
$$\xymatrix{
{ I^{\boldsymbol{\cdot}}(X) } \ar[r]^(.46){i^*} &
{ I^{\boldsymbol{\cdot}}(X^{\red}) } 
\ar[r]<.7ex>^(.4){\pr_1^*} \ar[r]<-1ex>_(.4){\pr_2^*} &
{ I^{\boldsymbol{\cdot}}(X^{\red} \times_X X^{\red}).
} \cr
}$$
But the closed immersion $i \colon X^{\red} \to X$ is
also a universal homeomorphism so the two projection $\pr_1$ and
$\pr_2$ are equal. Hence, the map $i^*$ is an isomorphism of complexes
of abelian groups. The lemma follows.
\end{proof}

For every morphism $f \colon Y \to X$ between noetherian schemes, we
have the induced functor $f^{-1} \colon \Sch/X \to \Sch/Y$ defined by
$f^{-1}(X'/X) = X'\times_XY/Y$. This functor, in turn, gives rise to
the adjoint pair of functors
$$\xymatrix{
{ (\Sch/X)^{\wedge} } \ar[r]<.7ex>^{f^p} &
{ (\Sch/Y)^{\wedge} } \ar[l]<.7ex>^{f_p} \cr
}$$
where $f_p$ is the restriction along $f^{-1}$ and $f^p$ the left Kan
extension along $f^{-1}$. Since the functor $f_p$ preserves
$\eh$-sheaves, we obtain a morphism of sites
$$f \colon (\Sch/Y)_{\eh} \to (\Sch/X)_{\eh}$$
with the direct image functor $f_*$ given by the restriction of the
functor $f_p$ and with the inverse image functor given by $f^* =
a_{\eh}f^pi_{\eh}$.

We consider two special cases. First, if $f \colon Y \to X$ is
separated and of finite type, the inverse image functor is given by
$f^*F(Y'/Y) = F(Y'/X)$ and has an exact left adjoint functor
$f_{!}$. Hence, in this case, we see that $f^*$ preserves injectives
and that the canonical map defines an isomorphism
$$H_{\eh}^*(Y,F) \xrightarrow{\sim} H_{\eh}^*(Y,f^*F).$$
Second, suppose that $f \colon Y \to X$ is the limit of a cofiltered
diagram $\{X_i\}$ with affine transition maps of schemes separated and
of finite type over $X$. It then follows
from~\cite[Theorem~IV.8.8.2]{EGA} that for every scheme $Y'$ separated
and of finite type over $Y$, there exists an index $i$ and a scheme
$X_i'$ separated and of finite type over $X_i$ such that $Y' =
X_i'\times_{X_i}Y$. Moreover, in this situation, loc.~cit.~implies
that
$$f^pF(Y'/Y) = \colim_{j \to i}F(X_i'\times_{X_i}X_j/X).$$
It is proved in~\cite[Section~12]{suslinvoevodsky} that, in this case,
the functor $f^p$ preserves $\eh$-sheaves, and hence,
that the inverse image functor $f^*$ is given by the same formula.

\begin{prop}\label{limitprop}Let $X$ be a noetherian scheme, and let
$f \colon Y \to X$ be the limit of a cofiltered diagram $\{ X_i \}$
with affine transition maps of schemes separated and of finite type
over $X$. Then for every $\eh$-sheaf $F$ of abelian groups on
$\Sch/X$, the canonical map defines an isomorphism 
$$\colim_i H_{\eh}^*(X_i,F) \xrightarrow{\sim}
H_{\eh}^*(Y,f^*F).$$
\end{prop}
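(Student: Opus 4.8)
The plan is to reduce the statement to a known continuity property of étale cohomology combined with the behaviour of $\cdh$-cohomology under cofiltered limits, using the fact that the $\eh$-topology is generated by the étale and $\cdh$-topologies. First I would choose an injective resolution $F \to I^{\boldsymbol{\cdot}}$ in the category of $\eh$-sheaves of abelian groups on $\Sch/X$, so that $H_{\eh}^*(X_i,F)$ is computed by the complex $I^{\boldsymbol{\cdot}}(X_i) = f_i^*I^{\boldsymbol{\cdot}}(X_i)$ for the structure map $f_i\colon X_i \to X$, and $H_{\eh}^*(Y,f^*F)$ by $(f^*I^{\boldsymbol{\cdot}})(Y)$. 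Since cohomology and filtered colimits of abelian groups commute, it suffices to prove that the canonical map
$$\colim_i I^{\boldsymbol{\cdot}}(X_i) \longrightarrow (f^*I^{\boldsymbol{\cdot}})(Y)$$
is an isomorphism of complexes; and because $f^*$ is computed section-wise by the colimit formula $f^pF(Y'/Y) = \colim_{j\to i}F(X_i'\times_{X_i}X_j/X)$ recalled just before the statement — valid on $\eh$-sheaves by \cite[Section~12]{suslinvoevodsky} — this comes down to checking that each $I^n$, being an injective $\eh$-sheaf, is \emph{continuous} along the diagram $\{X_i\}$, i.e. that $\colim_i I^n(X_i) \to I^n(Y)$ is an isomorphism, where on the right $I^n(Y)$ denotes the value of the $\eh$-sheafification of the presheaf colimit.

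The key step is therefore to establish continuity for a general injective $\eh$-sheaf. Here I would follow the strategy of Suslin–Voevodsky \cite[Section~12]{suslinvoevodsky} as adapted in \cite{geisser2}: exploit that the $\eh$-topos is, locally, built from the étale and $\cdh$ topoi. For the étale part, continuity of étale cohomology under cofiltered limits of schemes with affine transition maps is \cite[Theorem~IV.8.8.2]{EGA} together with the standard colimit argument of \cite[VII]{SGA4}; for the $\cdh$ part, one uses that the $\cdh$-topology has enough points given by the henselizations of the schemes at points, and that these points are themselves continuous along the limit. Concretely, I would show that the class of $\eh$-sheaves $G$ for which $\colim_i G(X_i) \to G(Y)$ is an isomorphism for all such limit diagrams contains the representable sheaves $a_{\eh}(h_{X'})$ (again by \cite[Theorem~IV.8.8.2]{EGA}, which produces the required $X_i'$), is closed under finite limits and filtered colimits of sheaves, and — crucially — is closed under the formation of $\eh$-descent-type kernels and cokernels, so that it contains all $\eh$-sheaves of the form $\operatorname{coker}(a_{\eh}h_{X''} \to a_{\eh}h_{X'})$; a standard resolution argument then propagates continuity to an arbitrary $\eh$-sheaf, and in particular to every injective one.

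The main obstacle I expect is not the formal colimit bookkeeping but the passage through $\eh$-sheafification: the functor $a_{\eh}$ is a left adjoint and need not commute with the relevant filtered colimits \emph{a priori}, so one must argue that sheafification along the $\eh$-topology is compatible with continuity. This is exactly the point where one needs the hypothesis that all schemes in sight are separated and of finite type over the noetherian base $X$, so that the $\eh$-covers of $X_i'$ are pulled back from $\eh$-covers at some finite stage — this is where \cite[Theorem~IV.8.8.2]{EGA} does the real work, guaranteeing that every $\eh$-cover of $Y'$, every refinement, and every Čech nerve descends to a finite index. Once that descent of covers to a finite stage is in hand, the $\eh$-sheafification of the presheaf colimit agrees with the colimit of the $\eh$-sheafifications, and the result follows. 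I would also record, as in loc.~cit., that the colimit formula for $f^*$ on $\eh$-sheaves stated before the proposition is precisely what makes the identification of $(f^*I^{\boldsymbol{\cdot}})(Y)$ with $\colim_i I^{\boldsymbol{\cdot}}(X_i)$ legitimate, closing the argument.
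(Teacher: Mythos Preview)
Your opening move---take an injective resolution $F \to I^{\boldsymbol{\cdot}}$ and use the explicit colimit formula for $f^*$---matches the paper, but you then assume the wrong thing and prove the trivial thing.

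The sentence ``$H_{\eh}^*(Y,f^*F)$ [is computed] by $(f^*I^{\boldsymbol{\cdot}})(Y)$'' is the whole content of the proposition and cannot be asserted at the outset. The functor $f^*$ is exact, so $f^*F \to f^*I^{\boldsymbol{\cdot}}$ is a resolution, but $f^*I^n$ is not in general injective; you must show it is $\Gamma(Y,-)$-acyclic before $H^*\Gamma(Y,f^*I^{\boldsymbol{\cdot}})$ computes $H_{\eh}^*(Y,f^*F)$. The paper does exactly this: it proves the \v{C}ech cohomology $\check{H}^*(Y,f^*I)$ vanishes by noting that $Y$ is noetherian (so every $\eh$-covering has a finite refinement) and that the colimit formula for $f^*$, combined with the fact that filtered colimits commute with finite limits, lets one pull the \v{C}ech complex of $f^*I$ for a finite cover of $Y$ back to a colimit of \v{C}ech complexes for $I$ over the $X_i$, each of which is already acyclic since $I$ is injective. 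An appeal to \cite[Proposition~V.4.3]{SGA4} then gives $\Gamma(Y,-)$-acyclicity.

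By contrast, the ``continuity'' step you spend two paragraphs on---that $\colim_i I^n(X_i) \to (f^*I^n)(Y)$ is an isomorphism---is immediate and holds for \emph{every} $\eh$-sheaf, not just injectives: the discussion before the proposition (citing \cite[Section~12]{suslinvoevodsky}) already says $f^p$ preserves $\eh$-sheaves, so $f^* = f^p$ on sheaves and the formula $f^*G(Y) = \colim_i G(X_i)$ is literal, with no sheafification to worry about. Your concern that $a_{\eh}$ might not commute with the filtered colimit is therefore moot, and the elaborate programme of propagating continuity from representables via closure properties is aimed at a non-problem. The genuine gap is the missing acyclicity argument for $f^*I$.
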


\begin{proof}Let $F \to I^{\boldsymbol{\cdot}}$ be an injective
resolution in the category of $\eh$-sheaves of abelian groups on
$\Sch/X$. Then, since filtered colimits and finite limits of diagrams
of sets commute, the explicit formula for the functor $f^*$ gives an
isomorphism 
$$\colim_i H_{\eh}^*(X_i,F) = \colim_i
H^*\Gamma(X_i,I^{\boldsymbol{\cdot}}) \xrightarrow{\sim}
H^*(\colim_i\Gamma(X_i,I^{\boldsymbol{\cdot}})) =
H^*\Gamma(Y,f^*I^{\boldsymbol{\cdot}}).$$
We claim that for every injective $\eh$-sheaf $I$ of abelian groups
on  $\Sch/X$, the $\eh$-sheaf $f^*I$ of abelian groups on $\Sch/Y$ is
$\Gamma(Y,-)$-acyclic. To show this, it suffices to show that the
\v{C}ech cohomology groups of $\check{H}^*(Y,f^*I)$
vanish~\cite[Proposition~V.4.3]{SGA4}. Since $Y$ is noetherian, every 
$\eh$-covering of $Y$ admits a refinement by a finite
$\eh$-covering. Therefore, it suffices to show that the \v{C}ech
cohomology groups of $f^*I$ with respect to every finite $\eh$-covering
of $Y$ vanish. But this follows immediately from the explicit formula
for the functor $f^*$, since filtered colimits and finite limits of
diagrams of sets commute. This proves the claim. Since $f^*$ is
exact, we conclude that $f^*F \to f^*I^{\boldsymbol{\cdot}}$ is a
resolution by $\Gamma(Y,-)$-acyclic objects in the category of
$\eh$-sheaves of abelian groups on $\Sch/Y$. Therefore, the canonical
map
$$H_{\eh}^*(Y,f^*F) \to H^*\Gamma(Y,f^*I^{\boldsymbol{\cdot}})$$
is an isomorphism. This shows that the map of the statement is an
isomorphism.
\end{proof}

\begin{theorem}\label{ehpcd}Let $X$ be a scheme separated and of
finite type over a separably closed field $k$ of positive
characteristic $p$. Then the $p$-cohomological dimension of $X$ with
respect to the $\eh$-topology is less than or equal to $\dim(X)$.
\end{theorem}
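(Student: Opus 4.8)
The plan is to adapt the argument of Suslin--Voevodsky~\cite[Section~12]{suslinvoevodsky} from the $\cdh$-topology to the $\eh$-topology, the two external inputs being de Jong's theorem on alterations and Artin's bound on the $p$-cohomological dimension of a scheme of finite type over a separably closed field of characteristic $p$. First I would make the standard reductions. Since $X$ is noetherian, every $\eh$-covering admits a finite refinement, as in the proof of Proposition~\ref{limitprop}, so $\eh$-cohomology of $X$ commutes with filtered colimits of $\eh$-sheaves; as a $p$-torsion $\eh$-sheaf is the filtered colimit of its $p^n$-torsion subsheaves and these carry finite filtrations with subquotients killed by $p$, it suffices to prove that $H_{\eh}^q(X,F)$ vanishes for $q>\dim(X)$ when $F$ is an $\eh$-sheaf of $\mathbb{Z}/p$-modules. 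By Lemma~\ref{reduced} we may assume $X$ reduced, and then induct on $d=\dim(X)$. When $d=0$ the scheme $X$ is a finite disjoint union of spectra of separably closed fields; every étale and every $\cdh$-covering of such a scheme admits a section, and hence — the $\eh$-topology being generated by these — so does every $\eh$-covering, so $H_{\eh}^q(X,F)=0$ for $q>0$. For the inductive step I would reduce to $X$ integral: writing a reduced $X$ of dimension $d$ as $X=X_1\cup X'$ with $X_1$ an irreducible component of dimension $d$ and $X'$ the union of the others, the family $\{X_1\hookrightarrow X,\,X'\hookrightarrow X\}$ is an abstract blow-up square, hence an $\eh$-covering, with $X_1\cap X'$ of dimension $<d$; the Mayer--Vietoris sequence, together with the inductive hypothesis applied to $X_1\cap X'$, a subsidiary induction on the number of components applied to $X'$, and the integral case applied to $X_1$, then yields the vanishing for $X$.

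For the integral case, let $X$ be integral of dimension $d$ with the theorem known in dimensions $<d$. By de Jong's theorem there is a generically étale alteration $\pi\colon X'\to X$ with $X'$ smooth over a finite — hence purely inseparable, hence again separably closed — extension $k'$ of $k$; let $V\subseteq X$ be the dense open over which $\pi$ is finite étale, and set $Z=(X\smallsetminus V)_{\red}$ and $Z'=\pi^{-1}(Z)_{\red}$, both of dimension $<d$, the latter because it is a proper closed subscheme of the integral scheme $X'$. On $X'$, being smooth over the separably closed field $k'$ of characteristic $p$, one has $H_{\eh}^q(X',F)=H_{\et}^q(X',F)=0$ for $q>d$: the comparison holds because an $\eh$-sheaf restricts to an étale sheaf and, on smooth schemes, $\cdh$-cohomology agrees with Nisnevich cohomology~\cite[Corollary~3.9]{cortinashaesemeyerschlichtingweibel}, and the vanishing is Artin's theorem~\cite[Exp.~X]{SGA4}; on $Z$ and $Z'$ the theorem holds by induction. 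It remains to combine these via descent along the $\eh$-covering $\{\pi,\,Z\hookrightarrow X\}$, which is where alterations play the role of resolution of singularities for the $\eh$-topology (cf.~\cite{geisser2}): using that $\pi$ is finite étale over $V$, the higher fibre products occurring in the descent are controlled over $V$ by finite étale descent and over $Z$ by the inductive hypothesis, and one concludes $H_{\eh}^q(X,F)=0$ for $q>d$. The general statement~\ref{ehpcdtheorem} then follows from this one by a Hochschild--Serre argument for the absolute Galois group of $k$, whose $p$-cohomological dimension is at most $1$.

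The main obstacle is the last step. Because a de Jong alteration is generically finite rather than birational, the covering $\{\pi,\,Z\hookrightarrow X\}$ is \emph{not} an abstract blow-up square, and the associated descent spectral sequence involves the iterated self-fibre products $X'\times_X\cdots\times_X X'$, which are of dimension $d$ and in general singular, rather than only $X'$, $Z$ and $Z'$; a naive Čech argument therefore gives no control of the cohomological amplitude. The substance of the proof — exactly the part of Suslin--Voevodsky~\cite[Section~12]{suslinvoevodsky} that must be transported to the present setting — is to organise the dévissage so that the discrepancy between $X'$ and these fibre products, being finite étale over the dense open $V$, is concentrated over the lower-dimensional closed subscheme $Z$ and is thus resolved at strictly smaller dimension, and then to propagate the resulting bounds back through the Mayer--Vietoris sequences. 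Carrying out this bookkeeping, together with the verification that $\eh$-cohomology agrees with étale cohomology on smooth schemes — which is what allows Artin's theorem to enter, and is precisely where separable closedness of $k$ produces the sharp bound $\dim(X)$ rather than $\dim(X)+1$ — is where essentially all the work lies; no individual cohomology computation is difficult.
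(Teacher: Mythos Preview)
Your approach diverges from the paper's, and the divergence is at a fatal point. The step you lean on hardest --- that for $X'$ smooth over a separably closed field one has $H_{\eh}^q(X',F)=H_{\et}^q(X',F)$ --- is not true for an arbitrary $\eh$-sheaf $F$ of $p$-torsion abelian groups. The citation of~\cite[Corollary~3.9]{cortinashaesemeyerschlichtingweibel} does not help: that result concerns presheaves of spectra satisfying Nisnevich and blow-up excision, not arbitrary abelian sheaves, and in any case its proof uses resolution of singularities, which Theorem~\ref{ehpcd} does not assume. Concretely, the stalk of $R^t\alpha_*F$ at a geometric point $\bar{x}$ of $X'$ lying over a point of codimension $c$ is $H_{\eh}^t(\Spec\mathscr{O}_{X',\bar{x}}^{\sh},f^*F)$, and there is no reason for this to vanish when $0<t\leqslant c$; the Leray spectral sequence does not collapse and the comparison fails. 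Without it, $X'$ is just another $d$-dimensional scheme and the alteration buys you nothing over what you started with. The second difficulty you flag --- controlling the iterated fibre products in the descent along $\{\pi,\,Z\hookrightarrow X\}$ --- is real as well, and you have not actually carried it out; but it is moot given the first gap.

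The paper's proof avoids both problems by never passing to a smooth model. It runs the Leray spectral sequence for $\alpha\colon(\Sch/X)_{\eh}\to(\Sch/X)_{\et}$ and uses the inductive hypothesis, together with the dimension-reducing device of~\cite[X~Lemma~3.3~(i)]{SGA4}, to show that $R^t\alpha_*F$ is supported in dimension $\max\{0,d-t\}$; Artin's bound then forces the edge homomorphism $H_{\eh}^q(X,F)\to H_{\et}^0(X,R^q\alpha_*F)$ to be an isomorphism for all $q>d$. Given a class $h\in H_{\eh}^q(X,F)$ with $q>d$, one refines an $\eh$-cover killing $h$ to the form $U'\to X'\to X$ with $U'\to X'$ \'etale and $X'\to X$ a proper \emph{birational} $\cdh$-cover (not an alteration), passes to the closure $X''\subset X'$ of the generic fibre to keep the dimension at most $d$, uses the edge isomorphism over $X''$ to see that $h|_{X''}=0$, and finishes with the Mayer--Vietoris sequence for the abstract blow-up $X''\to X$ along a proper closed subscheme, where the inductive hypothesis applies. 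No smoothness, no alterations, no $\eh$-versus-\'etale comparison on a resolution.
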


\begin{proof}We must show that for every $\eh$-sheaf $F$ of
$p$-primary torsion abelian groups on $\Sch/X$, the 
cohomology group $H_{\eh}^q(X,F)$ vanishes for $q > d = \dim(X)$. We
follow the proof of~\cite[Theorem~12.5]{suslinvoevodsky} and proceed by
induction on $d$. Suppose first that $d = 0$. By Lemma~\ref{reduced}
we may further assume that $X$ is reduced. It follows
from~\cite[Proposition~2.3]{geisser2} that every $\eh$-covering of $X$
admits a refinement by an \'{e}tale covering of $X$. Therefore, we
conclude from~\cite[Theorem~III.4.1]{SGA4} that the change-of-topology
map defines an isomorphism
$$H_{\et}^q(X,\alpha_*F) \xrightarrow{\sim} 
H_{\eh}^q(X,F).$$
The statement now follows from~\cite[X~Corollary~5.2]{SGA4}.

We next let $d > 0$ and assume that the statement has been proved for
schemes of smaller dimension. By Lemma~\ref{reduced} we may assume
that $X$ is reduced, and by a descending induction on the number of
connected components we may further assume that $X$ is
integral~\cite[Prop~2.3]{geisser2}. We consider the Leray spectral
sequence
$$E_2^{s,t} = H_{\et}^s(X, R^t\alpha_*F)
\Rightarrow H_{\eh}^{s+t}(X, F).$$
Let $x \in X$ be a point of codimension $c < d$, let $\bar{x}$ be a
geometric point lying above $x$, and let $f \colon
\Spec\mathscr{O}_{X,\bar{x}}^{\sh} \to X$ be the
canonical map. It follows from Proposition~\ref{limitprop} that the stalk of
$R^t\alpha_*F$ at $\bar{x}$ is given by
$$(R^t\alpha_*F)_{\bar{x}} 
= H_{\eh}^t(\Spec\mathscr{O}_{X,\bar{x}}^{\sh},f^*F).$$ 
By~\cite[X~Lemma~3.3~(i)]{SGA4} there exists a morphism $g \colon
X' \to X$ from a $c$-dimensional scheme separated and of finite type
over a separably closed field $k'$ and a point $x' \in X'$ lying under
$\bar{x}$ such that $g(x') = x$ and such that the induced map
$$g' \colon \Spec\mathscr{O}_{X',\bar{x}}^{\sh} \to
\Spec\mathscr{O}_{X,\bar{x}}^{\sh}$$
is an isomorphism. Let $f' \colon
\Spec\mathscr{O}_{X',\bar{x}}^{\sh} \to X'$ be the
canonical map. Then 
$$(R^t\alpha_*F)_{\bar{x}} 
= H_{\eh}^t(
\Spec\mathscr{O}_{X',\bar{x}}^{\sh},g'{}^*f^*F)
= H_{\eh}^t(
\Spec\mathscr{O}_{X',\bar{x}}^{\sh},f'{}^*g^*F).$$
Hence, we conclude from Proposition~\ref{limitprop} that
$$(R^t\alpha_*F)_{\bar{x}} = \colim H_{\eh}^t(U',g^*F)$$
where the colimit ranges over all \'{e}tale neighborhoods $U' \to X'$
of $\bar{x}$. Each $U'$ is a $c$-dimensional scheme separated and of
finite type over the separably closed field $k'$. Therefore, the
inductive hypothesis shows that $(R^t\alpha_*F)_{\bar{x}}$ is zero for 
$t > c$. It follows that the sheaf $R^t\alpha_*F$ is supported in
dimension $\max\{0,d-t\}$. We recall from~\cite[X~Corollary~5.2]{SGA4} 
that if $Z$ is a scheme of finite type over a separably closed field
of characteristic $p > 0$, then the \'{e}tale $p$-cohomological
dimension of $Z$ is less than or equal to $\dim(Z)$. This shows that
in the Leray spectral sequence, $E_2^{s,t}$ is zero for $s >
\max\{0,d-t\}$, and hence, the edge-homomorphism 
$$H_{\eh}^q(X, F) \to
H_{\et}^0(X, R^q\alpha_*F)$$
is an isomorphism for $q > d$.

We now fix a cohomology class $h \in H_{\eh}^q(X,F)$ with $q > d$ and
proceed to show that $h$ is zero. There exists an $\eh$-covering $Y
\to X$ such that the restriction of $h$ to $Y$ is zero. Moreover,
by~\cite[Proposition~2.3]{geisser2}, the covering $Y \to X$ admits a
refinement of the form $U' \to X' \to X$, where $U' \to X'$ is an
\'{e}tale covering and $X' \to X$ a proper bi-rational
$\cdh$-covering. We let $X'' \subset X'$ be the closure of the inverse
image of the generic point of $X$, and let $U'' = U \times_{X'}X'' \to
X''$. It follows from~\cite[Lemma~12.4]{suslinvoevodsky} that $X''$ is
a scheme separated and of finite type over $k$ of dimension at most $d$
and that the morphism $X'' \to X$ is proper and bi-rational. We claim
that the restriction $h''$ of the class $h$ to $X''$ is zero. Indeed,
the restriction of $h''$ to $U''$ is zero, and therefore, the image of 
$h''$ by the edge homomorphism of the Leray spectral sequence
$$H_{\eh}^q(X'', F) \to
H_{\et}^0(X'', R^q\alpha_*F)$$
is zero. But we proved above that the edge homomorphism is an
isomorphism, so we find that $h''$ is zero as claimed. To conclude
that $h$ is zero, we choose a proper closed subscheme $Z \subset X$
such that the morphism $X'' \to X$ is an isomorphism outside $Z$ and
define $Z'' = X'' \times_X Z$. Then by~\cite[Proposition~3.2]{geisser2}, we
have a long exact cohomology sequence
$$\cdots \to H_{\eh}^{q-1}(Z'', F) \to 
H_{\eh}^q(X,F) \to
H_{\eh}^q(Z,F) \oplus 
H_{\eh}^q(X'',F) \to \cdots$$
The schemes $Z$ and $Z''$ are of finite type over $k$ and their
dimensions are strictly smaller than $d$. Therefore, by the inductive
hypothesis, the restriction map 
$$H_{\eh}^q(X,F) \to
H_{\eh}^q(X'',F)$$
is an isomorphism for $q > d$. Since the image $h''$ of $h$ by this
map is zero, we conclude that $h$ is zero as stated. This completes
the proof.
\end{proof}

\begin{proof}[Proof of Theorem~\ref{ehpcdtheorem}]We consider the Leray
spectral sequence 
$$E_2^{s,t} = H_{\et}^s(X, R^t\alpha_*F)
\Rightarrow H_{\eh}^{s+t}(X, F).$$
Let $x \in X$ be a point of codimension $c \leqslant d$, and let
$\bar{x}$ be a geometric point lying above $x$. We claim that stalk of
$R^t\alpha_*F$ at $\bar{x}$ vanishes for $t > c$. To prove the claim,
we may assume that $X$ is affine. We write $X$ as a localization
$j \colon X \to X_1$ of a scheme $X_1$ separated and of finite type
over $k$. Then $x_1 = j(x) \subset X_1$ is a point of codimension
$c$. Hence, we find as in the proof of Theorem~\ref{ehpcd}, that the
stalk may be rewritten as a filtered colimit
$$(R^t\alpha_*F)_{\bar{x}} = 
\colim H_{\eh}^t(U',g^*F)$$
where the $U'$ are $c$-dimensional schemes separated and of finite
type over a separably closed field $k'$. The claim now follows
from Theorem~\ref{ehpcd}. We conclude that the sheaf $R^t\alpha_*F$ is
zero for $t > d$, and is supported in dimension $d-t$ for $t
\leqslant d$. Finally, we recall from~\cite[X~Theorem~5.1]{SGA4} that
the \'{e}tale $p$-cohomological dimension of a noetherian $\Fp$-scheme
$Z$ is less than or equal to $\dim(Z) + 1$. Therefore, $E_2^{s,t}$ is
zero for $s+t > d+1$. This completes the proof.
\end{proof}

\section{The de~Rham-Witt sheaves}

We say that the scheme $X$ is essentially smooth over the field $k$
if it can be covered by finitely many affine open subsets of the form
$\smash{ \Spec S^{-1}A }$ with $A$ a smooth $k$-algebra and $S \subset
A$ a multiplicative set. If $X$ is a scheme essentially smooth over a
field $k$, we let $\Sm/X$ be the full subcategory of $\Sch/X$ whose
objects are the schemes smooth over $X$. We define the Zariski,
\'{e}tale, $\cdh$, and $\eh$-topology on $\Sm/X$ to be the
Grothendieck topology induced  by the Zariski, \'{e}tale, $\cdh$, and
$\eh$-topology on $\Sch/X$, respectively, in the sense
of~\cite[III.3.1]{SGA4}. We denote by $\phi_p$ the restriction functor
from the category of presheaves on $\Sch/X$ to the category of
presheaves on $\Sm/X$.

\begin{lemma}\label{smoothenough}Let $k$ be a perfect field of positive
characteristic $p$ such that resolution of singularities holds
over $k$, and let $X$ be a scheme essentially smooth over $k$. Then
for every presheaf $F$ of abelian groups on $\Sch/X$, the canonical
map
$$H_{\tau}^*(X,a_{\tau}\phi_pF) \to 
H_{\tau}^*(X,a_{\tau}F)$$
is an isomorphism for $\tau$ the Zariski, \'{e}tale, $\cdh$, and
$\eh$-topology.
\end{lemma}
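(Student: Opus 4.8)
My plan is to identify, for each topology $\tau$, the cohomology of $X$ computed on the big site with cohomology computed on a subsite on which the presheaves $F$ and $\phi_p F$ agree tautologically. Write $\iota\colon\Sm/X\to\Sch/X$ for the fully faithful, continuous inclusion along which $\phi_p$ is restriction. Two facts will be used throughout: $X$ is the terminal object of both $\Sch/X$ and $\Sm/X$, so that $H_\tau^*(X,-)$ is the right derived functor of the global sections functor; and, since $X$ is essentially smooth over $k$, every object of the small $\tau$-site $X_\tau$ of $X$ is smooth over $k$ and hence belongs to $\Sm/X$, so that $F$ and $\phi_p F$ restrict to one and the same presheaf on $X_\tau$. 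I will treat the ``small'' topologies $\tau\in\{\Zar,\et\}$ and the ``large'' topologies $\tau\in\{\cdh,\eh\}$ separately.

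For $\tau\in\{\Zar,\et\}$ I would pass to the small site of $X$. The category $X_\tau$ is a full subcategory of both $\Sch/X$ and $\Sm/X$ that is stable under $\tau$-coverings, so that restriction to $X_\tau$ is exact and commutes with sheafification, and by the standard comparison between the big and small $\tau$-sites one obtains a canonical isomorphism $H_\tau^*(X,\mathcal{G})\cong H^*(X_\tau,\mathcal{G}|_{X_\tau})$ for every $\tau$-sheaf $\mathcal{G}$ on $\Sch/X$ or on $\Sm/X$. Applying this with $\mathcal{G}=a_\tau F$ and with $\mathcal{G}=a_\tau\phi_p F$, and using that $F|_{X_\tau}=(\phi_p F)|_{X_\tau}$, one finds that both $H_\tau^*(X,a_\tau F)$ and $H_\tau^*(X,a_\tau\phi_p F)$ are canonically isomorphic to $H^*(X_\tau,a_\tau(F|_{X_\tau}))$, so the map of the statement is an isomorphism. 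This step uses no hypothesis on $k$.

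For $\tau\in\{\cdh,\eh\}$ the plan is to apply the comparison lemma \cite[Theorem~III.4.1]{SGA4} to the inclusion $\iota$. Since the $\tau$-topology on $\Sm/X$ is by definition the one induced from $\Sch/X$, the only substantial hypothesis to verify is that $\Sm/X$ is a dense subcategory of $(\Sch/X)_\tau$; that is, that every scheme $Y$ in $\Sch/X$ admits a $\tau$-covering by schemes smooth over $k$. A standard limit argument based on \cite[Theorem~IV.8.8.2]{EGA}, in the spirit of Proposition~\ref{limitprop} and of the reductions in the proof of Theorem~\ref{haesemeyertheorem}, reduces this to the case that $Y$ is of finite type over $k$, and one then argues by induction on $\dim Y$. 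Since $Y^{\red}\to Y$ and the closed covering of $Y^{\red}$ by its reduced irreducible components are $\cdh$-coverings, one may assume that $Y$ is integral; the case $\dim Y=0$ is immediate, because then $Y$ is the spectrum of a finite extension of the perfect field $k$ and hence is smooth over $k$. If $\dim Y>0$, resolution of singularities furnishes a proper bi-rational morphism $\tilde{Y}\to Y$ with $\tilde{Y}$ smooth over $k$; choosing a nowhere dense closed subscheme $Z\subset Y$ away from which $\tilde{Y}\to Y$ is an isomorphism, the family $\{\tilde{Y}\to Y,\,Z\to Y\}$ is the covering family of an abstract blow-up square, hence a $\cdh$-covering, and $Z$ admits a $\cdh$-covering by smooth $k$-schemes by the inductive hypothesis. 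Composing gives the claim for $\tau=\cdh$, and hence for $\tau=\eh$, since every $\cdh$-covering is an $\eh$-covering.

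Granting the density claim, \cite[Theorem~III.4.1]{SGA4} shows that restriction along $\iota$ is an equivalence of topoi $(\Sch/X)_\tau^{\sim}\xrightarrow{\sim}(\Sm/X)_\tau^{\sim}$. Under it the terminal object $X$ corresponds to the terminal object $X$, and, because $\iota$ is moreover cocontinuous for these topologies by the density claim, $\phi_p$ commutes with sheafification, so that $a_\tau F$ corresponds to $a_\tau\phi_p F$. An equivalence of topoi commutes with the global sections functors and preserves injectives, so it induces an isomorphism $H_\tau^*(X,a_\tau F)\cong H_\tau^*(X,a_\tau\phi_p F)$ for $\tau=\cdh$ and $\tau=\eh$. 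I expect the one real obstacle to be this density claim in the $\cdh$/$\eh$ case --- the single place where resolution of singularities enters --- and I would establish it by the very dimension induction through abstract blow-up squares that underlies the proofs of Theorems~\ref{haesemeyertheorem} and \ref{ehpcd}; the remainder of the argument is formal.
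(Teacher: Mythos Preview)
Your treatment of $\tau\in\{\Zar,\et\}$ by passing to the small site of $X$ is correct, and pleasantly more direct than the paper's uniform argument described below. For $\tau\in\{\cdh,\eh\}$, however, there is a real gap. The comparison lemma \cite[Theorem~III.4.1]{SGA4} requires that every object of $\Sch/X$ be $\tau$-coverable by objects of $\Sm/X$; but by the paper's definition $\Sm/X$ consists of schemes \emph{smooth over $X$}, not merely smooth over $k$, and your resolution-of-singularities induction only furnishes coverings by the latter. The density you actually need is false: if $X=\mathbb{A}^1_k$ and $Y\hookrightarrow X$ is a closed point, then any nonempty $X$-scheme $W$ with an $X$-morphism $W\to Y$ has image contained in a single fibre of the structure map to $X$, so $W\to X$ is not flat and \emph{a fortiori} not smooth; hence $Y$ admits no $\cdh$-covering by objects of $\Sm/X$, and $\iota\colon\Sm/X\hookrightarrow\Sch/X$ does not induce an equivalence of $\cdh$- or $\eh$-topoi. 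Your identification of $a_\tau F$ with $a_\tau\phi_pF$ therefore does not go through.

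The paper argues differently, and uniformly in $\tau$. It shows that both the restriction $\phi_p$ and its right adjoint $\phi^r$ take $\tau$-sheaves to $\tau$-sheaves, so that on sheaves one has a triple of adjoints $\phi^*\dashv\phi_*\dashv\phi^!$ and in particular $\phi_*$ is exact; from the commuting squares of adjoints one reads off a natural isomorphism $a_\tau\phi_pF\xrightarrow{\sim}\phi_*a_\tau F$, and the Leray spectral sequence for $\phi_*$ then collapses to give the statement. For $\Zar$ and $\et$ the sheaf-preservation of $\phi^r$ comes from \cite[III.3.3]{SGA4}, using that Zariski and \'etale covers of an $X$-smooth scheme are again $X$-smooth; for $\cdh$ and $\eh$ the paper cites \cite[III.4.1]{SGA4}, so at that single step it too is invoking the density hypothesis you attempted to verify---but the paper's framework requires only that $\phi_*$ be exact, not that it be an equivalence.
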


\begin{proof}Let $\phi^l$ and $\phi^r$ be the left and right
adjoint functors of $\phi_p$ given by the two Kan extensions. The
functor $\phi_p$ preserves $\tau$-sheaves by the definition of the
induced Grothendieck topology~\cite[III.3.1]{SGA4}. We claim that
also $\phi^r$ preserves $\tau$-sheaves. Indeed, for the
Zariski topology and \'{e}tale topology this follows
from~\cite[Corollary~III.3.3]{SGA4}, and for the $\cdh$-topology and
$\eh$-topology it follows from~\cite[Theorem~III.4.1]{SGA4}. Hence, we
get the diagrams of adjunctions
$$\xymatrix{
{ (\Sch/X)_{\phantom{\tau}}^{\wedge} }
\ar[r]<.7ex>^{a_{\tau}} \ar[d]<.7ex>^{\phi_p} &
{ (\Sch/X)_{\tau}^{\sim} }
\ar[l]<.7ex>^{i_{\tau}} \ar[d]<.7ex>^{\phi_*} &
{ (\Sm/X)_{\phantom{\tau}}^{\wedge} }
\ar[r]<.7ex>^{a_{\tau}} \ar[d]<.7ex>^{\phi^r} &
{ (\Sm/X)_{\tau}^{\sim} }
\ar[l]<.7ex>^{i_{\tau}} \ar[d]<.7ex>^{\phi^!} \cr
{ (\Sm/X)_{\phantom{\tau}}^{\wedge} }
\ar[r]<.7ex>^{a_{\tau}} \ar[u]<.7ex>^{\phi^l} &
{ (\Sm/X)_{\tau}^{\sim} }
\ar[l]<.7ex>^{i_{\tau}} \ar[u]<.7ex>^{\phi^*} &
{ (\Sch/X)_{\phantom{\tau}}^{\wedge} }
\ar[r]<.7ex>^{a_{\tau}} \ar[u]<.7ex>^{\phi_p} &
{ (\Sch/X)_{\tau}^{\sim} }
\ar[l]<.7ex>^{i_{\tau}} \ar[u]<.7ex>^{\phi_*} \cr
}$$
where $\phi_*$ and $\phi^!$ are the restrictions of $\phi_p$ and
$\phi^r$, respectively, to the subcategory of $\tau$-sheaves, and
where $\phi^* = a_{\tau}\phi^li_{\tau}$. It follows that the functor
$\phi_*$ preserves all limits and colimits. Moreover, since the two
diagrams of right adjoint functors commute, the two diagrams of left
adjoint functors commute up to natural isomorphism. In particular,
we have a natural isomorphism
$$a_{\tau}\phi_pF \xrightarrow{\sim} \phi_*a_{\tau}F.$$
Now, the map of the statement is equal to the composition
$$H_{\tau}^*(X,a_{\tau}\phi_pF) \to
H_{\tau}^*(X,\phi_*a_{\tau}F) \to
H_{\tau}^*(X,a_{\tau}F)$$
of the induced isomorphism of cohomology groups and the
edge-homomorphism of the Leray spectral sequence
$$E_2^{s,t} =
H_{\tau}^s(X,R^t\phi_*a_{\tau}F) \Rightarrow
H_{\tau}^{s+t}(X,a_{\tau}F).$$
Since $\phi_*$ is exact the spectral sequence collapses and the
edge-homomorphism is an isomorphism. This completes the proof.
\end{proof}

We let $X$ be a noetherian $\Fp$-scheme and let $\mathscr{O}_X$ be the
presheaf on $\Sch/X$ that to the $X$-scheme $X'$ assigns
the $\Fp$-algebra $\Gamma(X',\mathscr{O}_{X'})$. It is a sheaf for the
\'{e}tale topology, but not for the $\cdh$-topology. We recall the
presheaf
$$W_n\Omega_X^q = W_n\Omega_{\mathscr{O}_X}^q$$
of de~Rham-Witt forms of
Bloch-Deligne-Illusie~\cite[Definition~I.1.4]{illusie}. It follows from  
Proposition~I.1.14 of op.~cit.~that the associated sheaves
$a_{\Zar}W_n\Omega_X^q$ and $a_{\et}W_n\Omega_X^q$ agree and are
quasi-coherent sheaves of $W_n(\mathscr{O}_X)$-modules on $\Sch/X$.

Now, suppose that $X$ is a scheme essentially smooth over a perfect
field $k$ of characteristic $p > 0$. We recall the structure of
the sheaf $a_{\et}W_n\Omega_X^q$ from~\cite{illusie}. We will abuse
notation and write $W_n\Omega_X^q$ for the \'{e}tale sheaf
$a_{\et}W_n\Omega_X^q$ on $\Sch/X$. There is a short exact sequence of
sheaves of abelian groups on $\Sm/X$ for the \'{e}tale topology
\begin{equation}\label{restrictionexactsequence}
0 \to \gr^{n-1}W_n\Omega_X^q \to W_n\Omega_X^q \xrightarrow{R}
W_{n-1}\Omega_X^q \to 0
\end{equation}
where $\gr^{n-1}W_n\Omega_X^q$ is the subsheaf generated by
the images of $V^{n-1} \colon \Omega_X^q \to W_n\Omega_X^q$
and $dV^{n-1} \colon \Omega_X^{q-1} \to W_n\Omega_X^q$. Let
$Z\Omega_X^q$ and $B\Omega_X^{q+1}$ be the kernel and
image sheaves of the differential $d \colon \Omega_X^q \to
\Omega_X^{q+1}$. The inverse Cartier operator
$$C^{-1} \colon \Omega_X^q \to Z\Omega_X^q/B\Omega_X^q$$
is defined as follows. Let $F \colon W_2\Omega_X^q \to \Omega_X^q$ be
the Frobenius map. It satisfies the relations $dF = pFd$, $FV = p$,
and $FdV = d$. The first relation shows that $F$ factors through the
inclusion of the subsheaf $Z\Omega_X^q$ in $\Omega_X^q$, and the two
remaining relations show that the composition $W_2\Omega_X^q \to
Z\Omega_X^q \to Z\Omega_X^q/B\Omega_X^q$ of the Frobenius map and the
canonical projection annihilates the subsheaf
$\gr^1W_2\Omega_X^q$. The inverse Cartier operator is now defined to
be the composition
$$\Omega_X^q \xleftarrow{\sim} W_2\Omega_X^q/\gr^1W_2\Omega_X^q
\xrightarrow{\bar{F}} Z\Omega_X^q/B\Omega_X^q.$$
It is an isomorphism of sheaves of abelian groups on
$\Sm/X$ for the \'{e}tale
topology;~see~\cite[Theorem~7.1]{katz}. The inverse isomorphism
$$C \colon Z\Omega_X^q/B\Omega_X^q \xrightarrow{\sim} \Omega_X^q$$
is the Cartier operator. It gives rise to a chain of subsheaves of
abelian groups
$$0 = B_0\Omega_X^q \subset B_1\Omega_X^1 \subset \cdots \subset
B_s\Omega_X^q \subset \cdots \subset Z_s\Omega_X^q \subset \cdots
Z_1\Omega_X^q \subset Z_0\Omega_X^q = \Omega_X^q$$
where $B_0\Omega_X^q = 0$, $Z_0\Omega_X^q = \Omega_X^q$,
$B_1\Omega_X^q = B\Omega_X^q$, $Z_1\Omega_X^q = Z\Omega_X^q$, and where for
$s \geqslant 2$, $B_s\Omega_X^q$ and $Z_s\Omega_X^q$ are defined to be
the subsheaves of abelian groups of $\Omega_X^q$ characterized by the 
short exact sequences
\begin{equation}\label{definingexactsequences}
\begin{aligned}
{} & 0 \to B_1\Omega_X^q \to B_s\Omega_X^q \xrightarrow{C} B_{s-1}\Omega_X^q \to 0 \cr
{} & 0 \to B_1\Omega_X^q \to Z_s\Omega_X^q \xrightarrow{C} Z_{s-1}\Omega_X^q \to 0 \cr
\end{aligned}
\end{equation}
It then follows from~\cite[Corollary~I.3.9]{illusie} that there is a
short exact sequence
\begin{equation}\label{exactsequenceone}
0 \to
\Omega_X^q/B_{n-1}\Omega_X^q \to
\gr^{n-1}W_n\Omega_X^q \to
\Omega_X^{q-1}/Z_{n-1}\Omega_X^{q-1} \to 0,
\end{equation}
of sheaves of abelian groups on $\Sm/X$ for the
\'{e}tale topology.

\begin{prop}\label{drwprop}Let $k$ be a perfect field of positive
characteristic $p$ and assume that resolution of singularities holds
over $k$. Then for all schemes $X$ essentially smooth over $k$ and
all integers $n \geqslant 1$ and $q \geqslant 0$, the
change-of-topology maps
$$\xymatrix{
{ H_{\Zar}^*(X,a_{\Zar}W_n\Omega_X^q) } \ar[r] \ar[d] &
{ H_{\et}^*(X,a_{\et}W_n\Omega_X^q) } \ar[d] \cr
{ H_{\cdh}^*(X,a_{\cdh}W_n\Omega_X^q) } \ar[r] &
{ H_{\eh}^*(X,a_{\eh}W_n\Omega_X^q) } \cr
}$$
are isomorphisms.
\end{prop}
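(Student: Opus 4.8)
The plan is to reduce, via Lemma~\ref{smoothenough}, to the category $\Sm/X$ of schemes smooth over $X$, and there to show that $W_n\Omega_X^q$ has the same cohomology for all four topologies. By Lemma~\ref{smoothenough}, each group $H_\tau^*(X,a_\tau W_n\Omega_X^q)$ is identified with $H_\tau^*(X,a_\tau\phi_pW_n\Omega_X^q)$, so it suffices to treat the presheaf $W_n\Omega_X^q$ on $\Sm/X$, which I continue to denote by the same symbol. By~\cite[Proposition~I.1.14]{illusie}, recalled above, its associated Zariski and \'etale sheaves coincide and are quasi-coherent; hence its restriction to the small Zariski, Nisnevich, or \'etale site of any object of $\Sm/X$ is quasi-coherent, so the maps $H_{\Zar}^*(U,W_n\Omega_X^q)\to H_{\Nis}^*(U,W_n\Omega_X^q)\to H_{\et}^*(U,W_n\Omega_X^q)$ are isomorphisms and vanish in positive degrees for $U$ affine. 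This settles the top horizontal map, so that it remains to show the maps $H_{\Nis}^*(X,W_n\Omega_X^q)\to H_{\cdh}^*(X,a_{\cdh}W_n\Omega_X^q)$ and $H_{\et}^*(X,W_n\Omega_X^q)\to H_{\eh}^*(X,a_{\eh}W_n\Omega_X^q)$ are isomorphisms.

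Since resolution of singularities holds over $k$, every $\cdh$-covering, respectively $\eh$-covering, of a scheme in $\Sm/X$ admits a refinement assembled from Nisnevich, respectively \'etale, coverings and blow-ups along smooth centers; this is the $\Sm/X$-variant of the arguments of~\cite{cortinashaesemeyerschlichtingweibel} and~\cite{geisser2}. By the cd-structure formalism it follows that, for a quasi-coherent sheaf $\mathscr F$ on $\Sm/X$, the presheaf $U\mapsto R\Gamma(U,\mathscr F)$ of quasi-coherent cohomology satisfies $\cdh$-descent, and hence $H_{\Nis}^*(X,\mathscr F)=H_{\cdh}^*(X,a_{\cdh}\mathscr F)$, as soon as this presheaf carries the square associated with the blow-up $\pi\colon Y'\to Y$ of an object $Y$ of $\Sm/X$ along a smooth center $Z$, with exceptional divisor $Z'$, to a homotopy cartesian square of complexes. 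The same presheaf also computes \'etale hypercohomology, so under the same hypothesis it satisfies both \'etale and $\cdh$-descent, hence $\eh$-descent, and therefore $H_{\et}^*(X,\mathscr F)=H_{\eh}^*(X,a_{\eh}\mathscr F)$ as well. I will apply this with $\mathscr F=W_n\Omega_X^q$, which is quasi-coherent by the first step.

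It then remains to verify the blow-up descent condition for $W_n\Omega_X^q$. The homotopy-fiber ``defect'' of such a square depends functorially on $\mathscr F$ and fits into an exact triangle for each short exact sequence of quasi-coherent sheaves, so the class of sheaves satisfying blow-up descent is closed under extensions. Applying this to the restriction sequence~\eqref{restrictionexactsequence}, to the sequence~\eqref{exactsequenceone}, to the defining sequences~\eqref{definingexactsequences}, and to the Cartier sequences $0\to Z_1\Omega_X^j\to\Omega_X^j\xrightarrow{d}B_1\Omega_X^{j+1}\to0$ and $0\to B_1\Omega_X^j\to Z_1\Omega_X^j\xrightarrow{C}\Omega_X^j\to0$, an induction on $n$, on the index $s$, and a descending induction on $j$ reduces the verification, in finitely many steps, to the case of the K\"ahler differentials $\Omega_X^j$, the induction bottoming out at $B_1\Omega_X^0=0$ and $Z_1\Omega_X^0\cong\Omega_X^0$. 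For $\Omega_X^j$ the claim is the classical blow-up formula for Hodge sheaves: via the projection formula and the computation of $R\pi_*$ on the twists of $\mathscr O$ along the exceptional projective bundle, the homotopy fibers of $\Omega_Y^j\to R\pi_*\Omega_{Y'}^j$ and of $\Omega_Z^j\to R\pi_*\Omega_{Z'}^j$ are identified under the canonical map; compare~\cite{cortinashaesemeyerschlichtingweibel}.

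The main obstacle is this last step. The d\'evissage is formal once one checks that the structure maps $R$, $V$, $d$, $C$ entering~\eqref{restrictionexactsequence}--\eqref{definingexactsequences} are compatible with pullback along $\pi$ and along the closed immersions $Z\hookrightarrow Y$ and $Z'\hookrightarrow Y'$, so that the exact-triangle argument applies to the defects; but one must also be sure that the blow-up formula for $\Omega^j$ holds over an arbitrary field of characteristic $p$, and that the reduction to $\Sm/X$ in the second step is valid, which is where resolution of singularities and the perfectness of $k$ --- the latter through the \'etale-local structure of $W_n\Omega_X^q$ recalled before the statement --- are used.
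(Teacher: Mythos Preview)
Your proposal is correct and follows essentially the same approach as the paper: reduce to $\Sm/X$ via Lemma~\ref{smoothenough}, use quasi-coherence for the Zariski/\'etale comparison, invoke the criterion of~\cite[Corollary~3.9]{cortinashaesemeyerschlichtingweibel} to reduce the $\cdh$/$\eh$ comparison to blow-up descent, and then d\'evisse via the exact sequences~\eqref{restrictionexactsequence}, \eqref{exactsequenceone}, \eqref{definingexactsequences} and the Cartier sequences down to $\Omega_X^j$. Two small remarks: the induction on $j$ for $B_1\Omega_X^j$ and $Z_1\Omega_X^j$ is ascending (base case $j=0$), not descending; and the blow-up formula for $\Omega^j$ in positive characteristic that you flag as the main obstacle is supplied in the paper by~\cite[Chapter~IV, Theorem~1.2.1]{gros} rather than by~\cite{cortinashaesemeyerschlichtingweibel}, which treats characteristic zero.
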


\begin{proof}Since $a_{\Zar}W_n\Omega_X^q$ and $a_{\et}W_n\Omega_X^q$
are quasi-coherent $W_n(\mathscr{O}_X)$-modules, the top horizontal
map is an isomorphism. We show that the right-hand vertical map is an
isomorphism; the proof for the left-hand vertical map is analogous. By
Lemma~\ref{smoothenough}, it suffices to show that the
change-of-topology map
$$H_{\et}^*(X,a_{\et}\phi_pW_n\Omega_X^q) \to
H_{\eh}^*(X,a_{\eh}\phi_pW_n\Omega_X^q)$$
is an isomorphism. We again abuse notation and write $W_n\Omega_X^q$
for the \'{e}tale sheaf $a_{\et}\phi_pW_n\Omega_X^q$ on $\Sm/X$.

We proceed by induction on $n \geqslant 1$ as
in~\cite[Proposition~6.3]{cortinashaesemeyerschlichtingweibel}. In the case
$n = 1$, we let   
$R_{\et}\Gamma(-, \Omega_X^q)$ be the presheaf of chain complexes
given by a functorial model for the total right derived functor for
the \'{e}tale topology of the functor $\Gamma(-,\Omega_X^q)$ from
$\Sm/X$ to the category of abelian groups. We must show
that the presheaf $R_{\et}\Gamma(-,\Omega_X^q)$ satisfies
descent for the $\eh$-topology. To prove this, it
suffices  by~\cite[Corollary~3.9]{cortinashaesemeyerschlichtingweibel} to
show that for every blow-up square of smooth $X$-schemes
$$\xymatrix{
{ Z' } \ar[r] \ar[d] &
{ Y' } \ar[d] \cr
{ Z } \ar[r] &
{ Y, } \cr
}$$
the induced square of complexes of abelian groups
$$\xymatrix{
{ R_{\et}\Gamma(Z',\Omega_X^q) } &
{ R_{\et}\Gamma(Y',\Omega_X^q) } \ar[l] \cr
{ R_{\et}\Gamma(Z,\Omega_X^q) } \ar[u] &
{ R_{\et}\Gamma(Y,\Omega_X^q) } \ar[l] \ar[u] \cr
}$$
is homotopy cartesian. But this is proved
in~\cite[Chapter~IV,~Theorem~1.2.1]{gros}. 

We next assume the statement for $n-1$ and prove it for $n$. By a
five-lemma argument based on the short exact sequence of
sheaves~(\ref{restrictionexactsequence}), it suffices to show that the
change-of-topology map
$$H_{\et}^*(X,\gr^{n-1}W_n\Omega_X^q) \to
H_{\eh}^*(X,\alpha^*\gr^{n-1}W_n\Omega_X^q)$$
is an isomorphism. Furthermore, by a five-lemma argument based on the
exact sequence~(\ref{exactsequenceone}), it suffices to show that for
all $s \geqslant 0$, the change-of-topology maps
\begin{equation}\label{BZchangeoftopologymaps}
\begin{aligned}
H_{\et}^*(X, B_s\Omega_X^q) & \to
H_{\eh}^*(X, \alpha^*B_s\Omega_X^q) \cr
H_{\et}^*(X, Z_s\Omega_X^q) & \to
H_{\eh}^*(X, \alpha^*Z_s\Omega_X^q) \cr
\end{aligned}
\end{equation}
are isomorphisms. The case $s = 0$ was proved above. To prove the case
$s = 1$, we argue by induction on $q \geqslant 0$: The basic case
$q = 0$ has already been proved, since $B_1\Omega_X^0 = 0$ and $C \colon
Z_1\Omega_X^0 = Z_1\Omega_X^0/B_1\Omega_X^0 \xrightarrow{\sim} \Omega_X^0$, and the
induction step follows by a five-lemma argument based on the exact
sequences of sheaves
$$\begin{aligned}
0 \to Z_1\Omega_X^{q-1} \to \Omega_X^{q-1} & \xrightarrow{d} B_1\Omega_X^q \to
0 \cr
0 \to B_1 \Omega_X^q \to Z_1\Omega_X^q & \xrightarrow{C} \Omega_X^q \to 0. \cr
\end{aligned}$$
Finally, we let $s \geqslant 2$ and assume, inductively, that the
maps~(\ref{BZchangeoftopologymaps}) have been proved to be
isomorphisms for $s-1$. Then a five-lemma argument based on the
short exact sequence of sheaves~(\ref{definingexactsequences}) show
that the maps~(\ref{BZchangeoftopologymaps}) are isomorphisms for
$s$. This completes the proof.
\end{proof}

\begin{lemma}\label{drwcontinuity}Let $X$ be a noetherian $\Fp$-scheme
and let $x \in X$ be a point. Then the canonical map $f \colon
\Spec\mathscr{O}_{X,x} \to X$ induces an isomorphism
$$f^*a_{\Zar}W_n\Omega_X^q \to 
a_{\Zar}W_n\Omega_{\Spec\mathscr{O}_{X,x}}^q$$
of sheaves of abelian groups on $\Sch/\Spec\mathscr{O}_{X,x}$ for the
Zariski topology.
\end{lemma}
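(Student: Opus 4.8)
The statement to prove, Lemma~\ref{drwcontinuity}, asserts that de Rham–Witt forms commute with localization at a point, in the sense that for the canonical map $f\colon \Spec\mathscr{O}_{X,x}\to X$ the pullback $f^*a_{\Zar}W_n\Omega_X^q$ agrees with $a_{\Zar}W_n\Omega_{\Spec\mathscr{O}_{X,x}}^q$. Since everything is Zariski-local, the plan is to reduce immediately to the affine case: cover $X$ by affine opens and use that all the constructions in sight are compatible with restriction to opens. So I may assume $X = \Spec A$ with $A$ a noetherian $\Fp$-algebra, and $x$ corresponds to a prime $\mathfrak p\subset A$, so that $\mathscr{O}_{X,x} = A_{\mathfrak p}$.

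**The key algebraic input.** The heart of the matter is the purely algebraic statement that the big de Rham–Witt complex (or rather the $p$-typical $W_n\Omega^{\boldsymbol\cdot}$ of Bloch–Deligne–Illusie) commutes with localization: for a multiplicative set $S\subset A$ one has a canonical isomorphism $S^{-1}W_n\Omega_A^q \xrightarrow{\sim} W_n\Omega_{S^{-1}A}^q$. This is a standard fact — it is implicit in Illusie~\cite{illusie} and follows from the fact that $W_n\Omega^q_{(-)}$ is built functorially out of the Witt vectors $W_n(-)$ together with differentials, the Frobenius $F$, the Verschiebung $V$, and $d$, all of which are compatible with étale (in particular, localization) base change; cf.~also the quasi-coherence statement recalled in the excerpt, namely that $a_{\Zar}W_n\Omega_X^q = a_{\et}W_n\Omega_X^q$ is a quasi-coherent $W_n(\mathscr{O}_X)$-module. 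First I would invoke precisely this quasi-coherence: for a quasi-coherent sheaf $\mathscr F$ on $X=\Spec A$, one has $f^*\mathscr F$ on $\Spec A_{\mathfrak p}$ equal to the quasi-coherent sheaf associated to $A_{\mathfrak p}\otimes_A \Gamma(X,\mathscr F)$, and this is the content of the lemma once one identifies $A_{\mathfrak p}\otimes_A W_n\Omega_A^q$ with $W_n\Omega_{A_{\mathfrak p}}^q$.

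**Assembling the proof.** Concretely, the steps are: (i) reduce to $X=\Spec A$ affine by Zariski-locality; (ii) recall that $a_{\Zar}W_n\Omega_X^q$ is the quasi-coherent sheaf on $\Spec A$ associated to the $W_n(A)$-module $W_n\Omega_A^q$, using Illusie, Proposition~I.1.14, as cited in the excerpt; (iii) observe that pullback of quasi-coherent sheaves along $f\colon \Spec A_{\mathfrak p}\to \Spec A$ sends $\widetilde{M}$ to $\widetilde{A_{\mathfrak p}\otimes_A M}$; (iv) invoke the algebraic localization identity $A_{\mathfrak p}\otimes_A W_n\Omega_A^q \cong W_n\Omega_{A_{\mathfrak p}}^q$ — equivalently $S^{-1}W_n\Omega_A^q \cong W_n\Omega_{S^{-1}A}^q$ for $S = A\smallsetminus\mathfrak p$ — which one can extract from the étale base-change property of the de Rham–Witt complex, or prove directly by noting that a localization map is étale and that $W_n\Omega^q$ is compatible with étale localization (Illusie, loc.~cit.); (v) check that under these identifications the canonical comparison map of the lemma is the one induced by the identity, hence an isomorphism; and finally (vi) sheafify the identification over the cover of $X$ by affine opens to conclude the general case. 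The comparison map is canonical because $W_n\Omega^q_{(-)}$ is a functor on $\Fp$-algebras and $f$ is a map of schemes, so naturality gives the map, and steps (ii)–(iv) show it is an isomorphism on sections over each affine open of $\Spec A_{\mathfrak p}$.

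**Main obstacle.** The only non-formal point is step~(iv), the algebraic localization property of $W_n\Omega^q$. One has to be a little careful because $W_n(-)$ itself does \emph{not} commute with arbitrary colimits, but it \emph{does} commute with localization (more generally with étale base change): the Witt vectors of a localization $S^{-1}A$ are the localization of $W_n(A)$ at the multiplicative set of Teichmüller lifts $[s]$ for $s\in S$. Granting that, the differential, Frobenius, and Verschiebung are all $W_n$-linear or semilinear in the appropriate sense and descend to the localization, so the universal property characterizing $W_n\Omega^q$ (as the quotient of the de Rham complex of $W_n(A)$ by the Illusie relations) localizes as well. I expect this to be the part of the argument that requires the most care to state cleanly, but it is a known result and can be cited from Illusie~\cite{illusie}; everything else is formal manipulation of quasi-coherent sheaves and the induced-topology formalism already set up in the paper.
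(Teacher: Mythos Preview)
Your argument proves the statement on the \emph{small} Zariski site of $\Spec\mathscr{O}_{X,x}$, but the lemma is about sheaves on the \emph{big} site $\Sch/\Spec\mathscr{O}_{X,x}$. The presheaf $W_n\Omega_X^q$ in this paper is defined on $\Sch/X$: it assigns to an $X$-scheme $X'$ the group $\Gamma(X',W_n\Omega_{X'}^q)$, not the sections of a fixed quasi-coherent module pulled back from $X$. Your steps (ii)--(iii), which identify $a_{\Zar}W_n\Omega_X^q$ with $\widetilde{W_n\Omega_A^q}$ and then pull back via $A_{\mathfrak p}\otimes_A(-)$, are valid on the small site but say nothing about sections over an arbitrary affine $Y'\to Y=\Spec A_{\mathfrak p}$ of finite type. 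The ``quasi-coherence'' recalled from Illusie means only that the restriction of the big-site sheaf to each small site is quasi-coherent; it does not mean that the big-site sheaf is $\widetilde{M}$ for a single module $M$.

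Concretely, write $Y'=Y\times_U U'$ with $x\in U\subset X$ affine open and $U'\to U$ of finite type, say $U'=\Spec C$. What must be shown is that the canonical map
\[
\colim_{x\in V\subset U}\, W_n\Omega^q_{\Gamma(V\times_U U',\mathscr{O}_X)}
\;\longrightarrow\;
W_n\Omega^q_{\Gamma(Y',\mathscr{O}_Y)}
\]
is an isomorphism, where the left side is the value of $f^p a_{\Zar}W_n\Omega_X^q$ at $Y'$ computed via the formula preceding Proposition~\ref{limitprop}. Your localization identity (iv) is precisely the case $U'=U$; the general case needs that $W_n\Omega^q_{(-)}$ commutes with this filtered colimit of $\Fp$-algebras. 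The paper obtains this cleanly by observing that $W_n\Omega^{\boldsymbol{\cdot}}_{(-)}$ is \emph{left adjoint} to the forgetful functor from $V$-pro-complexes to $\Fp$-algebras, hence preserves colimits. Your \'etale-base-change argument for (iv) can be pushed through (apply it to $C$ rather than to $A$, and unwind $f^p$ explicitly), but as written your outline never leaves the small site and therefore does not establish the lemma as stated.
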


\begin{proof}Let $\mathscr{X}$ be a topos. We recall
from~\cite[Definition~I.1.4]{illusie} that, by definition, the de~Rham-Witt
complex is the left adjoint of the forgetful functor $g$ from the
category of $V$-pro-complexes in $\mathscr{X}$ to the category of
$\Fp$-algebras in $\mathscr{X}$. We let $x \in U \subset X$ be an open
neighborhood, and let $i_U$ be the corresponding point of the topos
$(\Sch/X)^{\wedge}$ given by $i_U^*(F) = \Gamma(U,F)$ and 
$i_{U*}(E)(X') = E^{\operatorname{Hom}_X(U,X')}$. Since $g \circ i_{U*} =
i_{U*}\circ g$, we conclude that there is a natural isomorphism
$$\Gamma(U,W_n\Omega_X^q) = \Gamma(U,W_n\Omega_{\mathscr{O}_X}^q)
\xrightarrow{\sim} W_n\Omega_{\Gamma(U,\mathscr{O}_X)}^q.$$
Now, let $Y = \Spec\mathscr{O}_{X,x}$. To prove the lemma, it suffices
to show that for every scheme $Y'$ affine and of finite type over
$Y$, the canonical map
$$\Gamma(Y',f^pW_n\Omega_X^q) \to \Gamma(Y',W_n\Omega_Y^q)$$
is an isomorphism. The discussion preceeding Proposition~\ref{limitprop}
shows that there exists an affine open neighborhood $x \in U \subset
X$ and a scheme $U'$ affine and of finite type over $U$ with $Y' = Y
\times_UU'$ such that the map in question is the canonical map
$$\colim_{x \in V \subset U} \Gamma(V\times_UU',W_n\Omega_X^q)
\to \Gamma(Y\times_UU',W_n\Omega_Y^q).$$
Here the colimit ranges over the affine open neighborhoods
$x \in V \subset U$. Now, by what was proved above, we may identify
this map with the canonical map
$$\colim_{x \in V \subset U} W_n\Omega_{\Gamma(V\times_UU',\mathscr{O}_X)}^q
\to W_n\Omega_{\Gamma(Y\times_UU',\mathscr{O}_Y)}^q.$$
Here the left-hand side is the colimit in the category of
sets. However, since the index category for the colimit is filtered,
the left-hand side is also equal to the colimit in the category of
$V$-pro-complexes in the category of sets. Therefore, the canonical map
in question is an isomorphism. Indeed, being a left adjoint, the
de~Rham-Witt complex preserves colimits, and the canonical map of
$\Fp$-algebras 
$$\colim_{x \in V \subset U}\Gamma(V \times_UU',\mathscr{O}_X) \to
\Gamma(Y\times_UU',\mathscr{O}_Y)$$
is an isomorphism.
\end{proof}

\begin{theorem}\label{drwsheavestheorem}Let $k$ be a perfect field of 
characteristic $p > 0$ such that resolution of singularities holds
over $k$, and let $X$ be a $d$-dimensional scheme essentially of
finite type over $k$. Then the change-of-topology map
$$H_{\et}^i(X,a_{\et}W_n\Omega_X^q) \to
H_{\eh}^i(X,a_{\eh}W_n\Omega_X^q)$$
is a surjection if $i = d$, and both groups vanish if $i > d$. 
\end{theorem}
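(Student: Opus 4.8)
The plan is to treat the two assertions separately, the étale one being essentially formal and the $\eh$ one requiring an induction on dimension.

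\emph{The étale statement.} By~\cite[Proposition~I.1.14]{illusie} the sheaf $a_{\et}W_n\Omega_X^q=a_{\Zar}W_n\Omega_X^q$ is a quasi-coherent sheaf of $W_n(\mathscr{O}_X)$-modules on $\Sch/X$, so $H_{\et}^i(X,a_{\et}W_n\Omega_X^q)=H_{\Zar}^i(X,a_{\Zar}W_n\Omega_X^q)$, which vanishes for $i>d$ by Grothendieck's vanishing theorem on the $d$-dimensional noetherian space $X$. This disposes of the étale half at once; moreover $H_{\Zar}^d(X,-)$ is then right exact on all sheaves of abelian groups on $X$, a fact used repeatedly below.

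\emph{The $\eh$ statement: set-up of the induction.} We argue by induction on $d=\dim X$, proving simultaneously the vanishing for $i>d$ and the surjectivity of the change-of-topology map for $i=d$. Lemma~\ref{reduced}, together with the naturality of the change-of-topology map under the closed immersion $X^{\red}\hookrightarrow X$ and the identification of its pullback of $a_{\eh}W_n\Omega_X^q$ with $a_{\eh}W_n\Omega_{X^{\red}}^q$, reduces both assertions to the case $X$ reduced. For $d=0$ a reduced $X$ is smooth over the perfect field $k$, so Proposition~\ref{drwprop} gives the change-of-topology isomorphisms and there is nothing in positive degrees. For the inductive step, apply resolution of singularities to each irreducible component of the reduced scheme $X$ and use that the closed covering of $X$ by its components is a $\cdh$-covering; this produces a proper morphism $\pi\colon X'\to X$ with $X'$ smooth over $k$ and $\dim X'=d$, together with a closed subscheme $Z\subset X$ with $\dim Z<d$ over which $\pi$ is an isomorphism. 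Setting $Z'=\pi^{-1}(Z)$ (so $\dim Z'\le d-1$), the square $(Z',X',Z,X)$ is an abstract blow-up square, hence there is a distinguished triangle exhibiting the total complex $R\Gamma_{\eh}(X,a_{\eh}W_n\Omega_X^q)$ as the homotopy fibre of $R\Gamma_{\eh}(X',a_{\eh}W_n\Omega_{X'}^q)\oplus R\Gamma_{\eh}(Z,a_{\eh}W_n\Omega_Z^q)\to R\Gamma_{\eh}(Z',a_{\eh}W_n\Omega_{Z'}^q)$, and likewise a Mayer--Vietoris long exact sequence~\cite[Proposition~3.2]{geisser2}.

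\emph{Vanishing and surjectivity.} For $i>d$ the inductive hypothesis gives $H_{\eh}^i(Z,-)=H_{\eh}^i(Z',-)=0$, while Proposition~\ref{drwprop} identifies $H_{\eh}^i(X',-)$ with $H_{\Zar}^i(X',a_{\Zar}W_n\Omega_{X'}^q)=0$; the Mayer--Vietoris sequence then kills $H_{\eh}^i(X,a_{\eh}W_n\Omega_X^q)$. For the surjectivity in degree $d$, let $\mathscr{G}^{\boldsymbol{\cdot}}$ be the complex of quasi-coherent $\mathscr{O}_X$-modules obtained as the fibre of $R\pi_*a_{\Zar}W_n\Omega_{X'}^q\oplus (i_Z)_*a_{\Zar}W_n\Omega_Z^q\to Rg_*a_{\Zar}W_n\Omega_{Z'}^q$, where $g\colon Z'\to X$; since all the sheaves involved are quasi-coherent, étale hypercohomology of the right-hand side of the $\eh$-descent triangle equals $H^{\ast}(X,\mathscr{G}^{\boldsymbol{\cdot}})$, and the change-of-topology map factors in degree $d$ through $H_{\Zar}^d(X,a_{\Zar}W_n\Omega_X^q)\to H^d(X,\mathscr{G}^{\boldsymbol{\cdot}})\to H_{\eh}^d(X,a_{\eh}W_n\Omega_X^q)$. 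The second map is surjective: in degree $d$ both groups are extensions of $H_{\Zar}^d(X',a_{\Zar}W_n\Omega_{X'}^q)$ (using $\dim Z,\dim Z'<d$) by the cokernel of a map between the degree-$(d-1)$ cohomology of $X'$, $Z$ and $Z'$, and on these the change-of-topology maps are an isomorphism for the smooth $X'$ by Proposition~\ref{drwprop} and surjections in degree $d-1$ for $Z$ and $Z'$ by the inductive hypothesis, so a diagram chase gives surjectivity. The first map is surjective because the cone $\mathscr{C}^{\boldsymbol{\cdot}}$ of $a_{\Zar}W_n\Omega_X^q\to\mathscr{G}^{\boldsymbol{\cdot}}$ is supported on $Z$ (as $\pi$ is an isomorphism off $Z$) with cohomology sheaves $H^i(\mathscr{C}^{\boldsymbol{\cdot}})$ supported in dimension $\le d-1-i$ for $i\ge 1$: here the essential input is that $\pi$ is \emph{birational}, so that the fibre of $\pi$ over a point of codimension $c$ has dimension $\le c-1$, which improves the naive bound $\dim\le d-i$ on the support of $R^i\pi_*a_{\Zar}W_n\Omega_{X'}^q$ by one, the contributions of $Rg_*$ being controlled analogously through the connecting maps of the sequence $0\to\mathscr{K}\to a_{\Zar}W_n\Omega_{X'}^q\to i'_*a_{\Zar}W_n\Omega_{Z'}^q\to 0$ on $X'$ (with $i'\colon Z'\hookrightarrow X'$). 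The hypercohomology spectral sequence together with Grothendieck vanishing then forces $H^d(X,\mathscr{C}^{\boldsymbol{\cdot}})=0$, hence $H_{\Zar}^d(X,a_{\Zar}W_n\Omega_X^q)\to H^d(X,\mathscr{G}^{\boldsymbol{\cdot}})$ is onto. Combining the two surjectivities completes the induction. The place where I expect the real work to lie is this last dimension count: propagating the birationality improvement through $\mathscr{G}^{\boldsymbol{\cdot}}$ so as to obtain the sharp support bounds $\dim H^i(\mathscr{C}^{\boldsymbol{\cdot}})\le d-1-i$ is precisely what makes the top-degree obstruction $H^d(X,\mathscr{C}^{\boldsymbol{\cdot}})$ vanish, and it is the only step that is not bookkeeping once Proposition~\ref{drwprop} is in hand.
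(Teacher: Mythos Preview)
Your argument is correct and takes a genuinely different route from the paper. The paper first reduces to \emph{affine} $X$ and there proves the stronger statement $H_{\eh}^d(X,a_{\eh}W_n\Omega_X^q)=0$: affineness enters through the theorem of formal functions, which gives $R^d\pi_*=0$ and makes $R^{d-1}\pi_*$ right exact, so the surjection $W_n\Omega_{X'}^q\twoheadrightarrow i'_*W_n\Omega_{Z'}^q$ yields surjectivity of $H^{d-1}_{\Zar}(X',-)\to H^{d-1}_{\Zar}(Z',-)$ and hence (with induction and Proposition~\ref{drwprop}) of the $\eh$-version, forcing the Mayer--Vietoris boundary into $H^d_{\eh}(X,-)$ to be zero. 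General $X$ is then handled via the Leray spectral sequence for $\epsilon\colon(\Sch/X)_{\eh}\to(\Sch/X)_{\Zar}$, computing stalks of $R^t\epsilon_*$ as colimits over affine neighbourhoods. You instead work with arbitrary $X$ directly: the decisive input is the sharp support estimate $\dim\operatorname{supp}R^i\pi_*F\le d-1-i$ for every $i\ge1$ and every quasi-coherent $F$ on $X'$ (valid because $\pi^{-1}(Z)=Z'$ has dimension $\le d-1$), together with the observation that the a~priori worse contribution of $R^{i-1}g_*W_n\Omega_{Z'}^q$ to $H^i(\mathscr{C}^{\boldsymbol{\cdot}})$ enters only through $\operatorname{coker}(R^{i-1}\pi_*\to R^{i-1}g_*)\hookrightarrow R^i\pi_*\mathscr{K}$, which again enjoys the good bound. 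This avoids both the affine reduction and the second Leray spectral sequence; on the other hand the paper's route yields as a byproduct the affine vanishing $H^d_{\eh}=0$, which is reused in the proof of Corollary~\ref{trcohomology}.

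One small point you pass over: the hypothesis ``resolution of singularities holds over $k$'' is formulated only for integral \emph{separated} $k$-schemes \emph{of finite type}, so producing the abstract blow-up square directly for a reduced $X$ that is merely essentially of finite type (and a~priori non-separated) needs a word. The paper is careful here, passing first to affine $X$ (hence separated, and a localization of a finite-type scheme whose resolution one base-changes); your argument can be patched the same way. Likewise, the reduction to $X^{\red}$ on the \'etale side uses not only Lemma~\ref{reduced} but also that $a_{\Zar}W_n\Omega_X^q\to i_*a_{\Zar}W_n\Omega_{X^{\red}}^q$ is surjective and $H^d_{\Zar}(X,-)$ is right exact, which the paper spells out.
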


\begin{proof}We follow the proof
of~\cite[Theorem~6.1]{cortinashaesemeyerschlichtingweibel}. Since the
sheaves $a_{\Zar}W_n\Omega_X^q$ and $a_{\et}W_n\Omega_X^q$ are
quasi-coherent $W_n(\mathscr{O}_X)$-modules, the change-of-topology
map defines an isomorphism
$$H_{\Zar}^i(X,a_{\Zar}W_n\Omega_X^q) \xrightarrow{\sim}
H_{\et}^i(X,a_{\et}W_n\Omega_X^q).$$
In particular, the common group vanishes for $i > d$. We consider
the following commutative diagram, where the horizontal maps are the
change-of-topology maps, and where the vertical maps are induced from
the canonical closed immersion.
$$\xymatrix{
{ H_{\Zar}^i(X,a_{\Zar}W_n\Omega_X^q) } \ar[r] \ar[d] &
{ H_{\eh}^i(X,a_{\eh}W_n\Omega_X^q) } \ar[d] \cr
{ H_{\Zar}^i(X^{\red},a_{\Zar}W_n\Omega_{X^{\red}}^q) }
\ar[r] &
{ H_{\eh}^i(X^{\red},a_{\eh}W_n\Omega_{X^{\red}}^q). }\cr
}$$
By Lemma~\ref{reduced}, the right-hand vertical map is an isomorphism
for all $i \geqslant 0$. Moreover, the cohomology groups on the
left-hand side may be calculated on the small Zariski sites and $X$
and $X^{\red}$, respectively, and the left-hand vertical map may be
identified with the map of sheaf cohomology groups of the small
Zariski site of $X$ induced by the surjective map of sheaves
$$a_{\Zar}W_n\Omega_X^q \to i_*a_{\Zar}W_n\Omega_{X^{\red}}^q$$
induced by the closed immersion $i \colon X^{\red} \to X$. It follows
that the left-hand vertical map is a surjection for $i \geqslant
d$. Therefore, we may assume that $X$ is reduced. 

We proceed by induction on $d$. The case $d = 0$ follows from
Proposition~\ref{drwprop}, since every reduced $0$-dimensional scheme
$X$ of finite type over the perfect field $k$ is smooth over $k$. So
we let $d > 0$ and assume that the statement for schemes of smaller
dimension. We must show that the statement holds for every reduced
$d$-dimensional scheme $X$ essentially of finite type over
$k$. Suppose first that $X$ is affine, and hence, separated. By
resolution of singularities, there exists an abstract blow-up square
$$\xymatrix{
{ Y' } \ar[r]^{i'} \ar[d]^{p'} &
{ X' } \ar[d]^{p} \cr
{ Y } \ar[r]^{i} &
{ X } \cr
}$$
where $X'$ is essentially smooth over $k$ and where the dimensions of
$Y$ and $Y'$ are smaller than $d$. The group
$\smash{ H_{\eh}^i(X',a_{\eh}W_n\Omega_{X'}^q) }$
vanishes for $i > d$ by Proposition~\ref{drwprop} and the groups 
$\smash{ H_{\eh}^i(Y,a_{\eh}W_n\Omega_Y^q) }$ 
and $\smash{ H_{\eh}^i(Y',a_{\eh}W_n\Omega_{Y'}^q) }$
vanish for $i > d-1$ by the induction. Therefore, the Mayer-Vietoris
long exact sequence of $\eh$-cohomology groups associated with the
abstract blow-up square above
$$\cdots \to 
H_{\eh}^{i-1}(Y',a_{\eh}W_n\Omega_{Y'}^q) \to
H_{\eh}^i(X,a_{\eh}W_n\Omega_X^q) \to
\overset{ \displaystyle{
    H_{\eh}^i(X',a_{\eh}W_n\Omega_{X'}^q) }}{ \underset{
    \displaystyle{ H_{\eh}^i(Y,a_{\eh}W_n\Omega_Y^q)
    }}{ \oplus }} \to \cdots$$
shows that the group
$H_{\eh}^i(X,a_{\eh}W_n\Omega_X^q)$ vanishes for $i > d$ as stated. We
must also show that $H_{\eh}^d(X,a_{\eh}W_n\Omega_X^q)$ is zero. We
first show that the common group
$$H_{\Zar}^d(X',a_{\Zar}W_n\Omega_{X'}^q) \xrightarrow{\sim}
H_{\eh}^d(X',a_{\eh}W_n\Omega_{X'}^q)$$
is zero. The left-hand group may be evaluated on the small Zariski
site of $X'$. Now, the theorem of formal
functions~\cite[Corollary~III.4.2.2]{EGA} shows that for every
quasi-coherent $W_n(\mathscr{O}_X')$-module $F$ on the small Zariski
site of $X'$, the higher direct image sheaf $R^dp_*F$ on the small
Zariski site of $X$ is zero. Since $X$ is affine, we conclude from the
Leray spectral sequence that the
$H_{\Zar}^d(X',a_{\Zar}W_n\Omega_{X'}^q)$ is zero as desired. We next
show that the lower horizontal map in the following diagram is
surjective.
$$\xymatrix{
{ H_{\Zar}^{d-1}(X',a_{\Zar}W_n\Omega_{X'}^q) }
\ar[r]^{i'{}^*} \ar[d] &
{ H_{\Zar}^{d-1}(Y',a_{\Zar}W_n\Omega_{Y'}^q) } \ar[d] \cr
{ H_{\eh}^{d-1}(X',a_{\eh}W_n\Omega_{X'}^q) }
\ar[r]^{i'{}^*} &
{ H_{\eh}^{d-1}(Y',a_{\eh}W_n\Omega_{Y'}^q) } \cr
}$$
Here the vertical maps are the change-of-topology maps. By induction,
the right-hand vertical map is surjective, so we may instead show that
the upper horizontal map is surjective. The cohomology groups in the
upper row may be evaluated on small Zariski sites of $X'$ and $Y'$,
respectively. Moreover, the theorem of formal functions shows that the
$R^{d-1}p_*$ is a right-exact functor from the category of
quasi-coherent $W_n(\mathscr{O}_{X'})$-modules to the category of
quasi-coherent $W_n(\mathscr{O}_X)$-modules. Since the closed
immersion $i'$ gives rise to a surjection
$$W_n\Omega_{X'}^q \to i_*'W_n\Omega_{Y'}^q$$
of quasi-coherent $W_n(\mathscr{O}_{X'})$-modules on the small Zariski
site of $X'$, we conclude that the induced map
$$R^{d-1}p_*W_n\Omega_{X'}^q \to R^{d-1}p_*i_*'W_n\Omega_{Y'}^q$$
is a surjection of $W_n(\mathscr{O}_X)$-modules on the small Zariski
site of $X$. As $X$ is assumed to be affine, the Leray  spectral
sequence shows that
$$i'{}^* \colon H_{\Zar}^{d-1}(X',a_{\Zar}W_n\Omega_{X'}^q) \to
H_{\Zar}^{d-1}(Y',a_{\Zar}W_n\Omega_{Y'}^q)$$
is surjective as desired. We conclude from the Mayer-Vietoris exact
sequence that the group $H_{\eh}^d(X,a_{\eh}W_n\Omega_X^q)$ is
zero. This proves the statement of the theorem for $X$ a
$d$-dimensional reduced affine scheme essentially of finite type over
$k$.

It remains to prove the statement for $X$ a general $d$-dimensional
reduced scheme essentially of finite type over $k$. To this end, we
let $\epsilon \colon (\Sch/X)_{\eh} \to (\Sch/X)_{\Zar}$ be the
canonical morphism of sites and consider the Leray spectral sequence
$$E_2^{s,t} = H_{\Zar}^s(X,R^t\epsilon_*a_{\eh}W_n\Omega_X^q) 
\Rightarrow H_{\eh}^{s+t}(X,a_{\eh}W_n\Omega_X^q).$$
Let $x \in X$ be a point of codimension $c$. Then
Proposition~\ref{limitprop} and Lemma~\ref{drwcontinuity} show that the
stalk of $R^t\epsilon_*a_{\eh}W_n\Omega_X^q$ at $x$ is given by 
$$(R^t\epsilon_*a_{\eh}W_n\Omega_X^q)_x^{\phantom{j}} =
H_{\eh}^t(\Spec\mathscr{O}_{X,x},a_{\eh}W_n\Omega_{\Spec\mathscr{O}_{X,x}}^q).$$
We have proved that this group vanishes if either $c > 0$
and $t \geqslant c$ or $c = 0$ and $t > 0$, or equivalently, if $t >
0$ and $c \leqslant t$. It follows that for $t > 0$, the higher
direct image sheaf $\smash{ R^t\epsilon_*a_{\eh}W_n\Omega_X^q }$ is
supported in dimension $<d-t$. Hence, $E_2^{s,t}$ vanishes if 
$t > 0$ and $s+t \geqslant d$. This shows that
$H_{\eh}^i(X,a_{\eh}W_n\Omega_X^q)$ is zero for $i > d$ and that the
edge homomorphism defines a surjection
$$H_{\Zar}^d(X,\epsilon_*a_{\eh}W_n\Omega_X^q) \twoheadrightarrow
H_{\eh}^d(X,a_{\eh}W_n\Omega_X^q).$$
Finally, it follows from Proposition~\ref{drwprop} that the cokernel
of the unit map
$$a_{\Zar}W_n\Omega_X^q \to \epsilon_*a_{\eh}W_n\Omega_X^q$$
is supported on the singular set of $X$ which has dimension strictly
less that $d$. Since the functor $H_{\Zar}^d(X,-)$ is
right-exact, we conclude that the induced map
$$H_{\Zar}^d(X,a_{\Zar}W_n\Omega_X^q) \to
H_{\Zar}^d(X,\epsilon_*a_{\eh}W_n\Omega_X^q)$$
is surjective. This proves the induction step and the theorem.
\end{proof}

\section{The sheaves $a_{\eh}\TR_q^n(-;p)$ and $a_{\eh}\TC_q^n(-;p)$}

Let $X$ be a noetherian $\Fp$-scheme. We briefly recall the
presheaves of fibrant symmetric spectra $\TR^n(-;p)$ and $\TC^n(-;p)$
on $\Sch/X$ and refer to~\cite{gh}
and~\cite{blumbergmandell} for a detailed discussion. Topological
Hochschild homology gives a presheaf $\THH(-)$ of fibrant symmetric
spectra with an action by the multiplicative group $\mathbb{T}$ of
complex numbers of modulus $1$. We let $C_{p^{n-1}} \subset
\mathbb{T}$ be the subgroup of order $p^{n-1}$ and define 
$$\TR^n(-;p) = \THH(-)^{C_{p^{n-1}}}$$
to be the presheaf of fibrant symmetric spectra given by the
$C_{p^{n-1}}$-fixed points. There are two maps of presheaves of
fibrant symmetric spectra
$$R, F \colon \TR^n(-;p) \to \TR^{n-1}(-;p)$$
called the restriction map and the Frobenius map, respectively, and
the presheaf of fibrant symmetric spectra $\TC^n(-;p)$ is defined to
be their homotopy equalizer. It follows immediately from the
definition that the associated presheaves of homotopy groups are
related by a long exact sequence
$$\xymatrix{
{ \cdots } \ar[r]<.2ex> &
{ \TC_q^n(X;p) } \ar[r]<.2ex> &
{ \TR_q^n(X;p) } \ar[r]<.2ex>^(.47){R-F} &
{ \TR_q^{n-1}(X;p) } \ar[r]<.2ex> &
{ \cdots } \cr
}$$
Moreover, by~\cite[Proposition~6.2.4]{hm3}, the sheaves
$a_{\Zar}\TR_q^n(-;p)$ and $a_{\et}\TR_q^n(-;p)$ agree and are
quasi-coherent $W_n(\mathscr{O}_X)$-modules on $\Sch/X$.

\begin{prop}\label{trsmoothisomorphism}Let $k$ be a perfect field of
positive characteristic $p$ such that resolution of singularities
holds over $k$, and let $X$ be a scheme essentially smooth over
$k$. Then for all integers $q$ and $n \geqslant 1$, the
change-of-topology maps
$$\xymatrix{
{ H_{\Zar}^*(X,a_{\Zar}\TR_q^n(-;p)) } \ar[r] \ar[d] &
{ H_{\et}^*(X,a_{\et}\TR_q^n(-;p)) } \ar[d] \cr
{ H_{\cdh}^*(X,a_{\cdh}\TR_q^n(-;p)) } \ar[r] &
{ H_{\eh}^*(X,a_{\eh}\TR_q^n(-;p)) } \cr
}$$
are isomorphisms.
\end{prop}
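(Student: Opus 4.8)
The plan is to deduce this from Proposition~\ref{drwprop} by way of the Hesselholt--Madsen description of topological restriction homology of smooth schemes over a perfect field in terms of de~Rham--Witt forms. Three of the four maps in the diagram reduce to one: the top horizontal map is an isomorphism because $a_{\Zar}\TR_q^n(-;p)$ and $a_{\et}\TR_q^n(-;p)$ coincide and are quasi-coherent $W_n(\mathscr{O}_X)$-modules, as recalled above from~\cite{hm3}; once both vertical maps are known to be isomorphisms the bottom horizontal map is one by commutativity of the square; and the argument for the left-hand vertical map is word for word that for the right-hand one, with the left-hand vertical map of Proposition~\ref{drwprop} used in place of the right-hand one. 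So it suffices to show that the change-of-topology map $H_{\et}^*(X,a_{\et}\TR_q^n(-;p)) \to H_{\eh}^*(X,a_{\eh}\TR_q^n(-;p))$ is an isomorphism.

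First I would apply Lemma~\ref{smoothenough} with $F = \TR_q^n(-;p)$ to replace, on both sides, the sheaf on $\Sch/X$ associated with $\TR_q^n(-;p)$ by the sheaf associated with its restriction $\phi_p\TR_q^n(-;p)$ to $\Sm/X$; this is the same reduction made at the start of the proof of Proposition~\ref{drwprop}, and it puts every scheme occurring into the category of schemes smooth over $k$. Next I would invoke the computation, due to Hesselholt and Madsen~\cite{hm3}, of $\TR_q^n$ of smooth $\Fp$-algebras: for $X$ essentially smooth over the perfect field $k$ there is a natural, restriction-compatible, finite filtration of the presheaf $\phi_p\TR_q^n(-;p)$ on $\Sm/X$ whose successive quotients are among the de~Rham--Witt presheaves $\phi_p W_m\Omega_X^{q'}$ with $1 \leqslant m \leqslant n$ and $q' \geqslant 0$. (Concretely, such a filtration can be manufactured from the tower of restriction maps $\TR^n(-;p) \to \TR^{n-1}(-;p) \to \dots \to \TR^1(-;p) = \THH(-)$, whose layers are the homotopy orbit spectra $\THH(-)_{hC_{p^{m-1}}}$ and whose homotopy sheaves over a smooth base are, by the homotopy-orbit spectral sequence with its Connes-operator differentials, finite iterated extensions of de~Rham--Witt sheaves.) Applying the exact functors $a_{\et}$ and $a_{\eh}$ term by term turns this into finite filtrations of $a_{\et}\TR_q^n(-;p)$ and $a_{\eh}\TR_q^n(-;p)$ with successive quotients $a_{\et}W_m\Omega_X^{q'}$ and $a_{\eh}W_m\Omega_X^{q'}$ respectively, and the change-of-topology map is filtered.

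It then remains to run a five-lemma argument up this finite filtration: the change-of-topology maps fit into a morphism between the long exact cohomology sequences attached to the short exact sequences of successive quotients, and Proposition~\ref{drwprop} --- applied precisely in the range $1 \leqslant m \leqslant n$, $q' \geqslant 0$ in which it is stated --- says the map is an isomorphism on each quotient $W_m\Omega_X^{q'}$. Hence it is an isomorphism on $\TR_q^n(-;p)$, and repeating the argument with $\cdh$ in place of $\eh$ handles the left-hand vertical map. I expect the only real obstacle to be the structural input itself, namely the Hesselholt--Madsen computation of $\TR^n$ of smooth schemes over a perfect field and the resulting natural de~Rham--Witt filtration; granting that, the proposition is a formal consequence of Lemma~\ref{smoothenough}, Proposition~\ref{drwprop}, and diagram chasing, and it uses resolution of singularities only through those two results.
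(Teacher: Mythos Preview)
Your reductions---the top map via quasi-coherence, then Lemma~\ref{smoothenough} to pass to $\Sm/X$, then treat the left vertical map analogously---match the paper exactly, and the overall strategy of expressing $\TR_q^n$ in terms of de~Rham--Witt sheaves and invoking Proposition~\ref{drwprop} is the same. The paper's structural input, however, is sharper and simpler than yours: rather than a filtration it cites Theorem~B of~\cite{h} (Hesselholt, not~\cite{hm3}), which gives a canonical \emph{direct sum decomposition}
\[
\bigoplus_{m \geqslant 0} a_{\Zar}\phi_pW_n\Omega_X^{q-2m} \xrightarrow{\sim} a_{\Zar}\phi_p\TR_q^n(-;p)
\]
of Zariski sheaves on $\Sm/X$, all summands with the \emph{same} level $n$; sheafifying for $\et$ and $\eh$ and applying Proposition~\ref{drwprop} to each summand finishes at once, with no five-lemma induction. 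Your route through the restriction tower and the homotopy-orbit spectral sequence could in principle be made to work, but the description of the successive quotients as $W_m\Omega_X^{q'}$ for varying $m$ is not what that tower actually produces: the layers $\THH_{hC_{p^{m-1}}}$ have homotopy sheaves that are iterated extensions of \emph{subquotients} of de~Rham sheaves $\Omega_X^{q'}$ coming from the $E_{\infty}$-page, and pinning those down precisely enough to invoke Proposition~\ref{drwprop} already amounts to most of the work in~\cite{h}. So your argument is in the right spirit but gains nothing over the paper's; the direct-sum decomposition is both the correct citation and the shortest path.
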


\begin{proof}The sheaves $a_{\Zar}\TR_q^n(-;p)$ and
$a_{\et}\TR_q^n(-;p)$ are sheaves of quasi-coherent
$W_n(\mathscr{O}_X)$-modules. Therefore, the top horizontal map is an
isomorphism. We show that the right-hand vertical map is an
isomorphism; the proof for the left-hand vertical map is analogous. 
It suffices by Lemma~\ref{smoothenough}, to show that the
change-of-topology map
$$H_{\et}^*(X,a_{\et}\phi_p\TR_q^n(-;p)) \to
H_{\eh}^*(X,a_{\eh}\phi_p\TR_q^n(-;p))$$
is an isomorphism. We recall from~\cite[Theorem~B]{h} that there is a
canonical isomorphism of sheaves of abelian groups on
$\Sm/X$ for the Zariski topology
$$\bigoplus_{m \geqslant 0} a_{\Zar}\phi_pW_n\Omega_X^{q-2m} \xrightarrow{\sim}
a_{\Zar}\phi_p\TR_q^n(-;p).$$
It induces an isomorphism of associated sheaves for the \'{e}tale
topology and for the $\eh$-topology. The proposition now follows from 
Proposition~\ref{drwprop} above.
\end{proof}

\begin{cor}\label{trcohomology}Let $k$ be a perfect field of positive
characteristic $p$ and assume that resolution of singularities holds
over $k$. Let $X$ be a $d$-dimensional scheme essentially of finite
type over $k$. Then for all integers $q$ and $n \geqslant 1$,
$$H_{\eh}^{d+1}(X,a_{\eh}\TR_q^n(-;p)) = 0.$$
\end{cor}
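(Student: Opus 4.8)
The plan is to follow the proof of Theorem~\ref{drwsheavestheorem}, with the de~Rham--Witt sheaf $W_n\Omega_X^q$ replaced by $\TR_q^n(-;p)$ and Proposition~\ref{drwprop} replaced by Proposition~\ref{trsmoothisomorphism}, but working only in the single degree $d+1$. By Lemma~\ref{reduced} we may assume that $X$ is reduced, and we argue by induction on $d = \dim(X)$. Since $a_{\et}\TR_q^n(-;p)$ is a sheaf of $W_n(\mathscr{O}_X)$-modules, it is $p^n$-torsion, so Theorem~\ref{ehpcdtheorem} already gives $H_{\eh}^i(X,a_{\eh}\TR_q^n(-;p)) = 0$ for $i > d+1$; thus only the degree $i = d+1$ remains. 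For $d = 0$ the reduced scheme $X$ is smooth over the perfect field $k$, and Proposition~\ref{trsmoothisomorphism} identifies $H_{\eh}^1(X,a_{\eh}\TR_q^n(-;p))$ with $H_{\Zar}^1(X,a_{\Zar}\TR_q^n(-;p))$, which vanishes as $X$ is a noetherian space of dimension zero.

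For the inductive step we assume $d \geqslant 1$ and that the corollary holds in dimensions $< d$. I first treat the affine case. If $X$ is affine of dimension $< d$, then $H_{\eh}^{d+1}(X,a_{\eh}\TR_q^n(-;p)) = 0$ already by Theorem~\ref{ehpcdtheorem}. If $X$ is affine, reduced and of dimension $d$, a descending induction on the number of irreducible components, using the Mayer--Vietoris sequence attached to the closed covering of $X$ by one component and the union of the others (whose pairwise intersection has dimension $< d$), reduces us to the case that $X$ is integral. For $X$ integral, resolution of singularities provides an abstract blow-up square with $X'$ essentially smooth over $k$ and $Y,Y'$ of dimension $< d$, exactly as in the affine case of the proof of Theorem~\ref{drwsheavestheorem}; the associated Mayer--Vietoris sequence of $\eh$-cohomology groups presents $H_{\eh}^{d+1}(X,a_{\eh}\TR_q^n(-;p))$ between $H_{\eh}^d(Y',a_{\eh}\TR_q^n(-;p))$ and $H_{\eh}^{d+1}(X',a_{\eh}\TR_q^n(-;p)) \oplus H_{\eh}^{d+1}(Y,a_{\eh}\TR_q^n(-;p))$. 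The outer two groups vanish by the inductive hypothesis together with Theorem~\ref{ehpcdtheorem}, and the middle one vanishes because Proposition~\ref{trsmoothisomorphism} identifies it with $H_{\Zar}^{d+1}(X',a_{\Zar}\TR_q^n(-;p))$, which is zero since $X'$ has dimension $d$. This settles the affine case in dimension $d$. Consequently, if $X$ is the spectrum of the local ring of a point on a scheme essentially of finite type over $k$ and $\dim(X) \leqslant d$, then writing $X$ as the cofiltered limit of its affine open subschemes and applying Proposition~\ref{limitprop} together with the affine case and the inductive hypothesis gives $H_{\eh}^{d+1}(X,a_{\eh}\TR_q^n(-;p)) = 0$.

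Finally, for general reduced $X$ of dimension $d$, I would use the Leray spectral sequence
$$E_2^{s,t} = H_{\Zar}^s(X,R^t\epsilon_*a_{\eh}\TR_q^n(-;p)) \Rightarrow H_{\eh}^{s+t}(X,a_{\eh}\TR_q^n(-;p))$$
for the morphism of sites $\epsilon \colon (\Sch/X)_{\eh} \to (\Sch/X)_{\Zar}$. By Proposition~\ref{limitprop}, together with the continuity of $\TR^n(-;p)$ --- the analogue of Lemma~\ref{drwcontinuity}, which holds because $\THH(-)$, and hence $\TR^n(-;p) = \THH(-)^{C_{p^{n-1}}}$, commutes with filtered colimits of rings --- the stalk of $R^t\epsilon_*a_{\eh}\TR_q^n(-;p)$ at a point $x$ of codimension $c$ is $H_{\eh}^t(\Spec\mathscr{O}_{X,x},a_{\eh}\TR_q^n(-;p))$. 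This vanishes for $t > c+1$ by Theorem~\ref{ehpcdtheorem}, for $t = c+1$ with $c < d$ by the inductive hypothesis, and for $t = c+1 = d+1$ by the local case just established. Hence $R^{d+1}\epsilon_*a_{\eh}\TR_q^n(-;p) = 0$, while for $1 \leqslant t \leqslant d$ the sheaf $R^t\epsilon_*a_{\eh}\TR_q^n(-;p)$ is supported in dimension $\leqslant d-t$. Combined with Grothendieck vanishing for the Zariski sheaf $\epsilon_*a_{\eh}\TR_q^n(-;p)$ on the $d$-dimensional space $X$, this forces $E_2^{s,t} = 0$ whenever $s+t = d+1$, and the corollary follows.

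The step I expect to be the main obstacle is the continuity input for $\TR^n(-;p)$, that is, the $\TR$-analogue of Lemma~\ref{drwcontinuity} used to compute the stalks of the higher direct images $R^t\epsilon_*$; granting this, the remaining argument is a bookkeeping exercise running parallel to the proof of Theorem~\ref{drwsheavestheorem}, truncated to degree $d+1$ and using Proposition~\ref{trsmoothisomorphism} in place of Proposition~\ref{drwprop}.
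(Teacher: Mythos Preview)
Your proposal is correct and follows essentially the same approach the paper indicates: the paper's proof is the single sentence ``by induction on $d$ and analogous to the first part of the proof of Theorem~\ref{drwsheavestheorem},'' and you have spelled out exactly that argument, replacing Proposition~\ref{drwprop} by Proposition~\ref{trsmoothisomorphism} and Lemma~\ref{drwcontinuity} by the continuity of $\TR^n(-;p)$. Your use of Theorem~\ref{ehpcdtheorem} as a shortcut for the vanishing in degrees above $\dim+1$ is a harmless simplification, and your Leray spectral sequence bookkeeping (stalks vanish for $t \geqslant c+1$, hence $R^t\epsilon_*$ supported in dimension $\leqslant d-t$) is the correct weakening of the corresponding step in Theorem~\ref{drwsheavestheorem}, which there used the stronger affine vanishing in degree $d$ coming from the formal functions argument that is not available here.
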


\begin{proof}The proof is by induction on $d$ and is analogous to the
first part of the proof of Theorem~\ref{drwsheavestheorem} above.
\end{proof}

Let $k$ be a perfect field of characteristic $p > 0$ and let $X$
be a scheme essentially of finite type over $k$. We consider the long
exact sequence
$$\xymatrix{
{ \cdots } \ar[r] &
{ \{ a_{\et}\TC_q^n(-;p) \} } \ar[r] &
{ \{ a_{\et}\TR_q^n(-;p) \} } \ar[r]^{\id-F} &
{ \{ a_{\et}\TR_q^n(-;p) \} } \ar[r] &
{ \cdots } \cr
}$$
of \'{e}tale sheaves of pro-abelian groups on $\Sch/X$. Here the
structure maps in the pro-abelian groups are the restriction maps
$R$. We recall from~\cite{h8} that there is a canonical map compatible
with restriction and Frobenius operators
\begin{equation}\label{derhamwitttr}
a_{\et}W_n\Omega_X^q \to a_{\et}\TR_q^n(-;p)
\end{equation}
and that this map is an isomorphism, for $q \leqslant 1$. We remark
that the assumption in op.~cit.~that $p$ be odd is
unnecessary. Indeed,~\cite[Theorem~4.3]{costeanu} shows that the result
is valid also for $p = 2$. We examine the map $\id - F$ in degrees
$q \leqslant 1$. 

\begin{lemma}\label{wittfrobenius}Let $X$ be a noetherian scheme over
$\Fp$. Then there is an exact sequence of sheaves of pro-abelian
groups on $\Sch/X$ for the \'{e}tale topology:
$$\xymatrix{
{ 0 } \ar[r] &
{ \{ \Z/p^n\Z \} } \ar[r] &
{ \{ a_{\et}W_n(\mathscr{O}_X) \} } \ar[r]^{\id - F} &
{ \{ a_{\et}W_n(\mathscr{O}_X) \} } \ar[r] &
{ 0. } \cr
}$$
\end{lemma}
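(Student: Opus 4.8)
The plan is to reduce the statement, which is about pro-sheaves, to the classical Artin--Schreier--Witt sequence of \'etale sheaves for each fixed $n$, after first unwinding the pro-endomorphism $\id - F$. Under the identification $\TR_0^n(-;p) = W_n(\mathscr{O}_X)$, the operator $F$ is the Witt-vector Frobenius $F \colon W_n(\mathscr{O}_X) \to W_{n-1}(\mathscr{O}_X)$, which on an $\Fp$-algebra is given in Witt components by $(a_0, \dots, a_{n-1}) \mapsto (a_0^p, \dots, a_{n-2}^p)$, while the structure map of the pro-system is the restriction $R \colon W_n(\mathscr{O}_X) \to W_{n-1}(\mathscr{O}_X)$, $(a_0, \dots, a_{n-1}) \mapsto (a_0, \dots, a_{n-2})$. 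Since $\{R\}$ represents the identity morphism of the pro-sheaf $\{a_{\et}W_n(\mathscr{O}_X)\}$, the pro-endomorphism $\id - F$ is represented at level $n-1$ by $R - F \colon W_n(\mathscr{O}_X) \to W_{n-1}(\mathscr{O}_X)$; and the component formulas give $R - F = (\id - \varphi_{n-1}) \circ R$, where $\varphi_m \colon W_m(\mathscr{O}_X) \to W_m(\mathscr{O}_X)$ is the Witt-vector Frobenius endomorphism, that is, the coordinate-wise $p$th power map. Hence $\id - F$ is equal, as a morphism of pro-sheaves, to the level-wise morphism $\{\id - \varphi_n\}$. Since kernels and cokernels of level-wise morphisms of pro-sheaves are computed level-wise, it therefore suffices to show that for every $n \geqslant 1$ the sequence
$$0 \to \underline{\Z/p^n\Z} \to a_{\et}W_n(\mathscr{O}_X) \xrightarrow{\,\varphi_n - \id\,} a_{\et}W_n(\mathscr{O}_X) \to 0$$
is a short exact sequence of \'etale sheaves on $\Sch/X$, compatibly with the restriction maps.

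For the exactness on the left and in the middle, I would use that $a_{\et}W_n(\mathscr{O}_X)$ is represented by the $X$-group scheme $W_{n,\Fp} \times_{\Fp} X$ and identify $\ker(\varphi_n - \id)$ with $G_n \times_{\Fp} X$, where $G_n = \ker(\varphi_n - \id \colon W_{n,\Fp} \to W_{n,\Fp})$; this $G_n$ is finite \'etale of order $p^n$ because the differential of $\varphi_n$ vanishes, so that of $\varphi_n - \id$ is invertible, and it is the constant group scheme $\underline{\Z/p^n\Z}$, since on $W_n(\overline{\Fp})$ the Galois group of $\Fp$ acts through $\varphi_n$ and hence trivially on $W_n(\overline{\Fp})^{\varphi_n = \id} = \Z/p^n\Z$. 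For surjectivity of $\varphi_n - \id$ as an \'etale sheaf, given an $X$-scheme $Y$, which we may take affine, say $Y = \Spec B$, and a section $a \in W_n(B)$, I would observe that the $\Z/p^n\Z$-torsor $\{ x : \varphi_n(x) - x = a \}$ has coordinate ring built from $B$ by successively adjoining roots of Artin--Schreier equations $t^p - t = c$: writing $\varphi_n(x) - x = a$ in Witt components, the first equation is $x_0^p - x_0 = a_0$, and the $i$th has the form $x_i^p - x_i = a_i + P_i(x_0, \dots, x_{i-1})$ for a universal polynomial $P_i$. Each such extension is finite \'etale of degree $p$ since $t^p - t - c$ has invertible derivative, so the torsor is finite \'etale and faithfully flat over $B$, and over it $a$ lies in the image of $\varphi_n - \id$. (Alternatively, the displayed sequence is the Artin--Schreier--Witt sequence and may be quoted from the literature.)

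Finally, to pass from the level-wise sequences to the asserted short exact sequence of pro-\'etale-sheaves, I would check compatibility with the restriction maps $R \colon W_n(\mathscr{O}_X) \to W_{n-1}(\mathscr{O}_X)$: these commute with $\varphi_\bullet$, as is clear from the component description, and they carry $\underline{\Z/p^n\Z} = \ker(\varphi_n - \id)$ onto $\underline{\Z/p^{n-1}\Z} = \ker(\varphi_{n-1} - \id)$ via the canonical surjection $\Z/p^n\Z \to \Z/p^{n-1}\Z$, which is the prescribed structure map of the pro-abelian group $\{\Z/p^n\Z\}$. Hence the level-wise sequences assemble into a short exact sequence of pro-objects, which by the first paragraph is precisely the sequence in the statement. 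I expect the only delicate point to be the bookkeeping of the first paragraph --- keeping the de~Rham--Witt Frobenius $F \colon W_n \to W_{n-1}$, the restriction $R \colon W_n \to W_{n-1}$, and the Witt-vector Frobenius endomorphism $\varphi_n \colon W_n \to W_n$ straight, and verifying that $\id - F$ agrees with the level-wise $\{\id - \varphi_n\}$; the rest is the standard Artin--Schreier--Witt argument together with a routine compatibility check.
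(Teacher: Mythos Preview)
Your proof is correct and takes a genuinely different route from the paper's.

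Both arguments begin with the same observation: over an $\Fp$-scheme the de~Rham--Witt Frobenius $F$ on $W_n(\mathscr{O}_X)$ coincides, as a map of pro-objects, with the level-wise Witt-vector Frobenius $\{W_n(\varphi)\}$, so that $\id - F$ is the strict morphism $\{\id - W_n(\varphi)\}$. From there the two proofs diverge. The paper verifies exactness on stalks: for a strictly henselian noetherian local $\Fp$-algebra $(A,\mathfrak m,\kappa)$ it proves surjectivity of $\id - W_n(\varphi)$ by induction on $n$ via the filtration $0 \to A \xrightarrow{V^{n-1}} W_n(A) \xrightarrow{R} W_{n-1}(A) \to 0$ and the surjectivity of $\id-\varphi$ on a strictly henselian ring; it identifies the kernel by passing to the residue field (where $\kappa$ being a domain forces $\ker(\id - W_n(\varphi)) = W_n(\Fp)$) and then showing $\id - W_n(\varphi)$ is injective on $W_n(\mathfrak m)$, which it reduces to $n=1$ and proves using Krull's intersection theorem. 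You instead argue globally with group schemes, identifying $\ker(\varphi_n - \id)$ as the base change of a finite \'etale constant group scheme over $\Fp$, and establish surjectivity by exhibiting the fibre as an explicit iterated Artin--Schreier torsor; this is essentially the classical Artin--Schreier--Witt sequence. Your approach has the pleasant feature that it never uses the noetherian hypothesis (which the paper needs for Krull's theorem), while the paper's argument is self-contained and avoids any appeal to the Artin--Schreier--Witt literature.
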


\begin{proof}Since $X$ is a scheme over $\Fp$, the Frobenius map $F$ 
agrees with the map $\{ W_n(\varphi) \}$ induced by the Frobenius
endomorphism of $X$. We prove that for every strictly henselian
noetherian local $\Fp$-algebra $(A,\mathfrak{m},\kappa)$, the
sequence
$$\xy
(0,0)*{W_n(A)};
(22,0)*{W_n(A)};
{\ar (16,0)*{};(6,0)*{};};
(-22,0)*{\Z/p^n\Z};
{ \ar (-6,0)*{};(-16,0)*{};};
(11,3)*{\scriptstyle{\id - W_n(\varphi)}};
(40,0)*{0};
{\ar (38,0)*{};(28,0)*{};};
(-40,0)*{0};
{ \ar (-28,0)*{};(-38,0)*{};};
\endxy$$
is exact. Since $A$ is strictly henselian, the map $\id - \varphi
\colon A \to A$ is surjective. An induction argument based on the
diagram with exact rows
$$\xymatrix{
{ 0 } \ar[r] &
{ A } \ar[r]^(.4){V^{n-1}} \ar[d]^{\id - \varphi} &
{ W_n(A) } \ar[r]^(.45){R} \ar[d]^{\id - W_n(\varphi)} &
{ W_{n-1}(A) } \ar[r] \ar[d]^{\id - W_{n-1}(\varphi)} &
{ 0 } \cr
{ 0 } \ar[r] &
{ A } \ar[r]^(.4){V^{n-1}} &
{ W_n(A) } \ar[r]^(.45){R} &
{ W_{n-1}(A) } \ar[r] &
{ 0 } \cr
}$$
shows that the map $\id - F$ is surjective as stated. To identify the
kernel of $\id - F$, we consider the following diagram
$$\xymatrix{
{ 0 } \ar[r] &
{ W_n(\mathfrak{m}) } \ar[r] \ar[d]^{\id - W_n(\varphi)} &
{ W_n(A) } \ar[r] \ar[d]^{\id - W_n(\varphi) } &
{ W_n(\kappa) } \ar[r] \ar[d]^{\id - W_n(\varphi) } &
{ 0 } \cr
{ 0 } \ar[r] &
{ W_n(\mathfrak{m}) } \ar[r] &
{ W_n(A) } \ar[r] &
{ W_n(\kappa) } \ar[r] &
{ 0 } \cr
}$$
We wish to show that the unit map $\eta \colon W_n(\Fp) \to W_n(A)$ is
an isomorphism onto the kernel of the middle vertical map. We have
$\\operatorname{im}(\eta) \subset \ker(\id - W_n(\varphi))$. Moreover, since $\kappa$
is a domain, the composition
$$W_n(\Fp) \to W_n(A) \to W_n(\kappa)$$
of $\eta$ and the canonical projection is an isomorphism of $W_n(\Fp)$
onto the kernel of the right-hand vertical map in the diagram
above. Hence, it will suffice to show that, in the diagram above, the
left-hand vertical map is injective. Moreover, an induction argument
based on the diagram
$$\xymatrix{
{ 0 } \ar[r] &
{ W_{n-1}(\mathfrak{m}) } \ar[r]^(.52){V} \ar[d]^{\id - W_{n-1}(\varphi)} &
{ W_n(\mathfrak{m}) } \ar[r]^(.6){R^{n-1}} \ar[d]^{\id - W_n(\varphi)} &
{ \mathfrak{m} } \ar[r] \ar[d]^{\id - \varphi} &
{ 0 } \cr
{ 0 } \ar[r] &
{ W_{n-1}(\mathfrak{m}) } \ar[r]^(.52){V} &
{ W_n(\mathfrak{m}) } \ar[r]^(.6){R^{n-1}} &
{ \mathfrak{m} } \ar[r] &
{ 0 } \cr
}$$
shows that it suffices to consider the case $n = 1$. In this case, we
recall that by a theorem of Krull~\cite[Corollary~0.7.3.6]{EGA}, the
$\mathfrak{m}$-adic topology on $A$ is separated. It follows that the 
map $\id - \varphi \colon \mathfrak{m} \to \mathfrak{m}$ is injective
as desired.
\end{proof}

\begin{lemma}\label{derhamwittfrobenius}Let $X$ be a noetherian scheme
over $\Fp$. Then the map of sheaves of pro-abelian groups on $\Sch/X$
for the \'{e}tale topology
$$\id - F \colon \{ a_{\et}W_n\Omega_X^1 \} \to
\{ a_{\et}W_n\Omega_X^1 \}$$
is an epimorphism.
\end{lemma}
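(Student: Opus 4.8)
Following the method of the proof of Lemma~\ref{wittfrobenius}, the plan is to reduce the statement to an explicit calculation in the de~Rham--Witt complex of a strictly henselian local ring. The pro-map $\id - F$ is represented by the maps of étale sheaves $R - F \colon a_{\et}W_{n+1}\Omega^1_X \to a_{\et}W_n\Omega^1_X$, and since every restriction map $R$ is an epimorphism of sheaves, the cokernel pro-sheaf has epimorphic transition maps and so is pro-zero as soon as each $R - F$ is an epimorphism. Thus it is enough to show that $R - F \colon a_{\et}W_{n+1}\Omega^1_X \to a_{\et}W_n\Omega^1_X$ is an epimorphism of étale sheaves for every $n$. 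This may be checked on stalks, and since the de~Rham--Witt complex is a left adjoint, hence commutes with filtered colimits of rings (as recalled in the proof of Lemma~\ref{drwcontinuity}), the stalk of this map at a geometric point $\bar x$ is $R - F \colon W_{n+1}\Omega^1_A \to W_n\Omega^1_A$ with $A = \mathscr{O}^{\sh}_{X,\bar x}$ a strictly henselian noetherian local $\Fp$-algebra. It remains to prove: for such $A$ and every $n \geqslant 0$, the map $R - F \colon W_{n+1}\Omega^1_A \to W_n\Omega^1_A$ is surjective.

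For $n = 0$ this is vacuous. For $n \geqslant 1$ I would use that $W_n\Omega^1_A$ is generated as a $W_n(A)$-module by the exact forms $dV^j[a]$ with $a \in A$ and $0 \leqslant j \leqslant n - 1$. Since $[1 + a] - 1 - [a]$ lies in the image of $V$ and $dV^j(1) = 0$, replacing $[a]$ by $[1 + a]$ alters $dV^j[a]$ only by a $W_n(A)$-combination of forms $dV^{j'}[\,\cdot\,]$ with $j' > j$; as $1 + a$ is a unit whenever $a$ is not, it follows that $W_n\Omega^1_A$ is already generated over $W_n(A)$ by the forms $dV^j[u]$ with $0 \leqslant j \leqslant n - 1$ and $u \in A^\times$. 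Hence it suffices to show $W_n(A)\, dV^j[u] \subseteq \operatorname{im}(R - F)$ for every unit $u$ and every $j$. For $j = 0$ one uses the relation $F\,d\log[u] = d\log[u]$, which follows from $F\,d[a] = [a]^{p-1}d[a]$ and the multiplicativity of $F$: for $\alpha \in W_{n+1}(A)$ one gets $(R - F)(\alpha \cdot d\log[u]) = (R\alpha - F\alpha)\,d\log[u]$, and the map $R - F = R \circ (\id - W_{n+1}(\varphi)) \colon W_{n+1}(A) \to W_n(A)$ is surjective by (the proof of) Lemma~\ref{wittfrobenius}, so that $W_n(A)\, d\log[u] = W_n(A)\, d[u] \subseteq \operatorname{im}(R - F)$. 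For $j \geqslant 1$ one argues by induction on $j$, using $F\, dV^j[u] = dV^{j-1}[u]$ (a consequence of $F\,dV = d$), the identities $R(\alpha\omega) = (R\alpha)(R\omega)$ and $F(\alpha\omega) = (F\alpha)(F\omega)$, and the fact that $R$ commutes with $V$ and with Teichmüller representatives: then $(R - F)(\alpha \cdot dV^j[u]) = (R\alpha)\, dV^j[u] - (F\alpha)\, dV^{j-1}[u]$, the last summand lies in $W_n(A)\, dV^{j-1}[u] \subseteq \operatorname{im}(R - F)$ by the inductive hypothesis, and $R\alpha$ runs through all of $W_n(A)$; hence $W_n(A)\, dV^j[u] \subseteq \operatorname{im}(R - F)$. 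Since these elements generate $W_n\Omega^1_A$, the map $R - F$ is surjective, as desired.

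The step I expect to require the most care is the generation statement at the start of the second paragraph. That $W_n\Omega^1_A$ is generated over $W_n(A)$ by the exact forms $dV^j[a]$ is immediate when $A$ is essentially smooth over $k$, from the exact sequences~(\ref{restrictionexactsequence})--(\ref{exactsequenceone}) of~\cite{illusie}; but here $A$ is merely noetherian and may be singular, so one needs the analogous structural fact for the de~Rham--Witt complex of an arbitrary $\Fp$-algebra --- for instance by writing $A$ as a filtered colimit of finitely generated $\Fp$-algebras, each of which is a quotient of a polynomial ring, and using that $W_n\Omega^\bullet$ of such a quotient is a quotient of $W_n\Omega^\bullet$ of the polynomial ring by a differential graded ideal. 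If one only wished to treat schemes $X$ admitting resolution of singularities, one could instead reduce the étale-sheaf statement to the essentially smooth case first; but the argument above, granted the generation statement, applies to every noetherian $\Fp$-scheme, and its only input beyond that is the case $q = 0$ already contained in Lemma~\ref{wittfrobenius}.
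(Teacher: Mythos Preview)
Your proof is correct and follows the same strategy as the paper's: reduce to showing that $R-F \colon W_{n+1}\Omega_A^1 \to W_n\Omega_A^1$ is surjective for every strictly henselian noetherian local $\Fp$-algebra $(A,\mathfrak{m},\kappa)$, and then exploit the $F$-invariance of $d\log[u]$ together with the surjectivity of $R-F$ on Witt vectors established in Lemma~\ref{wittfrobenius}. The paper's argument is shorter only because it asserts directly that $W_n\Omega_A^1$ is generated as an abelian group by elements $a\,d\log[x]_n$ with $a \in W_n(A)$ and $x \in 1+\mathfrak{m}$, after which the identity $(R-F)(a\,d\log[x]) = (R-F)(a)\,d\log[x]$ finishes at once; your induction on $j$ over the generators $dV^j[u]$ is an explicit substitute for that assertion, and arguably the more transparent route.

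Your concern in the final paragraph is unnecessary. The statement you actually use --- that $W_n\Omega_A^1$ is generated over $W_n(A)$ by the elements $dV^j[a]$ --- holds for \emph{every} $\Fp$-algebra $A$, smooth or not: the canonical map $\Omega_{W_n(A)}^{\bullet} \to W_n\Omega_A^{\bullet}$ is a surjection of differential graded algebras~\cite[I.1.3]{illusie}, and $\Omega_{W_n(A)}^1$ is generated over $W_n(A)$ by $d(W_n(A)) = \sum_j d\,V^j[A]$. Your subsequent reduction from $dV^j[a]$ to $dV^j[u]$ with $u \in A^{\times}$, using that $[1+a]-1-[a] \in VW_{n-1}(A)$ and that one of $a,\,1+a$ is a unit in a local ring, is clean and needs no smoothness hypothesis either. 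So the argument goes through for an arbitrary noetherian $\Fp$-scheme $X$, exactly as you intended.
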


\begin{proof}It will suffice to show that for every strictly henselian
noetherian local $\Fp$-algebra $(A,\mathfrak{m},\kappa)$, the map
$$R - F \colon W_n\Omega_A^1 \to W_{n-1}\Omega_A^1$$
is surjective. Since $A$ is local, the abelian group $W_n\Omega_A^1$ is
generated by elements of the form $ad\log[x]_n$ with $a \in W_n(A)$ and
$x \in 1+\mathfrak{m}$. Moreover, we have
$$(R-F)(ad\log[x]_n) = (R-F)(a)d\log[x]_{n-1}.$$
Hence, the proof of Lemma~\ref{wittfrobenius} shows that $R - F$ is an
epimorphism. The lemma follows.
\end{proof}

\begin{prop}\label{tcvanishing}Let $X$ be a noetherian scheme over $\Fp$. Then:
\begin{enumerate}
\item[(i)] The sheaf of pro-abelian groups
$\{ a_{\et}\TC_q^n(-;p) \}$ is zero for $q < 0$. 
\item[(ii)]The sheaf of pro-abelian groups
$\{ a_{\et}\TC_0^n(-;p) \}$ is canonically isomorphic to the sheaf
of pro-abelian groups $\{ \Z/p^n\Z \}$. 
\item[(iii)] There is a long exact sequence of sheaves of pro-abelian
groups
$$\xymatrix{
{ \cdots } \ar[r] &
{ \{ a_{\et}\TC_1^n(-;p) \} } \ar[r] &
{ \{ a_{\et}\TR_1^n(-;p) \} } \ar[r]^{\id-F} &
{ \{ a_{\et}\TR_1^n(-;p) \} } \ar[r] &
{ 0 } \cr
}$$
\end{enumerate}
\end{prop}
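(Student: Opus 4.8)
The plan is to deduce all three statements from the long exact sequence of \'{e}tale sheaves of pro-abelian groups
$$\xymatrix{
{ \cdots } \ar[r] &
{ \{ a_{\et}\TC_q^n(-;p) \} } \ar[r] &
{ \{ a_{\et}\TR_q^n(-;p) \} } \ar[r]^{\id-F} &
{ \{ a_{\et}\TR_q^n(-;p) \} } \ar[r] &
{ \{ a_{\et}\TC_{q-1}^n(-;p) \} } \ar[r] &
{ \cdots }
}$$
recalled at the beginning of the section, combined with the identification~(\ref{derhamwitttr}) of $a_{\et}\TR_q^n(-;p)$ with $a_{\et}W_n\Omega_X^q$ for $q \leqslant 1$, and with vanishing of $a_{\et}\TR_q^n(-;p)$ for $q < 0$. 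The first point I would establish is that $a_{\et}\TR_q^n(-;p) = 0$ for $q < 0$: this is a stalkwise statement, so it suffices to check it on strictly henselian local $\Fp$-algebras, where $\TR_q^n$ is concentrated in non-negative degrees; alternatively it follows from the Zariski-local structure results cited (the sheaf is a quasi-coherent $W_n(\mathscr{O}_X)$-module built from de~Rham-Witt forms $W_n\Omega^{q-2m}$ with $q-2m \geqslant 0$). Granting this, the long exact sequence immediately gives $a_{\et}\TC_q^n(-;p) = 0$ for $q < 0$, which is~(i), except that one must also check the map $\id - F$ in degree $0$ is surjective to kill the contribution of $\TR_0^n = W_n(\mathscr{O}_X)$ into $\TC_{-1}^n$; but that surjectivity is exactly the content of Lemma~\ref{wittfrobenius}.

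For~(ii), the relevant portion of the long exact sequence, using $a_{\et}\TR_{-1}^n(-;p) = 0$ and the identification $a_{\et}\TR_0^n(-;p) \cong a_{\et}W_n(\mathscr{O}_X)$ from~(\ref{derhamwitttr}), reads
$$0 \to \{ a_{\et}\TC_0^n(-;p) \} \to \{ a_{\et}W_n(\mathscr{O}_X) \} \xrightarrow{\id - F} \{ a_{\et}W_n(\mathscr{O}_X) \} \to 0 ,$$
where I have used the degree~$0$ surjectivity just discussed to truncate on the right. Lemma~\ref{wittfrobenius} identifies the kernel of $\id - F$ here with $\{ \Z/p^n\Z \}$, and this identification is canonical and compatible with the restriction maps $R$, which are the structure maps of the pro-system. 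Hence $\{ a_{\et}\TC_0^n(-;p) \} \cong \{ \Z/p^n\Z \}$, proving~(ii). One subtlety worth a sentence: the Frobenius $F$ on $\TR_0^n$ must be shown to correspond under~(\ref{derhamwitttr}) to the Witt-vector Frobenius (equivalently, since we are over $\Fp$, to $W_n(\varphi)$), so that Lemma~\ref{wittfrobenius} applies verbatim; this compatibility is part of the cited structure in~\cite{h8}.

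Finally,~(iii) is obtained by looking at the segment of the long exact sequence around degree~$1$: using $a_{\et}\TR_q^n(-;p) \cong a_{\et}W_n\Omega_X^q$ for $q = 0, 1$ and $a_{\et}\TC_0^n(-;p) \cong \{\Z/p^n\Z\}$ from~(ii), the sequence continues
$$\cdots \to \{ a_{\et}\TC_1^n(-;p) \} \to \{ a_{\et}W_n\Omega_X^1 \} \xrightarrow{\id - F} \{ a_{\et}W_n\Omega_X^1 \} \to \{ \Z/p^n\Z \} \to \cdots$$
and it remains to see that the map $\id - F$ on $\{ a_{\et}W_n\Omega_X^1 \}$ is an epimorphism, so that the sequence terminates with a $0$ as stated. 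That epimorphism is precisely Lemma~\ref{derhamwittfrobenius}. The main obstacle is not any one of these steps individually — each is a short exactness argument — but rather keeping careful track of the identifications between the topological objects $\TR_q^n$, $\TC_q^n$ and the algebraic de~Rham-Witt objects in low degrees, and in particular verifying that the operators $R$ and $F$ match up on both sides; once that bookkeeping is in place, (i), (ii), (iii) fall out of the long exact sequence together with Lemmas~\ref{wittfrobenius} and~\ref{derhamwittfrobenius}.
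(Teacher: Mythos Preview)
Your proposal is correct and follows exactly the approach of the paper: the paper's proof simply says the result follows from the isomorphism~(\ref{derhamwitttr}) for $q \leqslant 1$ together with Lemmas~\ref{wittfrobenius} and~\ref{derhamwittfrobenius}, and you have unpacked precisely how these ingredients combine via the long exact sequence relating $\TC$ and $\TR$. Your remark that the vanishing $a_{\et}\TR_q^n(-;p)=0$ for $q<0$ follows from~(\ref{derhamwitttr}) (since $W_n\Omega_X^q=0$ for $q<0$) is the right way to read the paper's terse argument.
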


\begin{proof}This follows immediately from the fact that the
map~(\ref{derhamwitttr}) is an isomorphism, for $q \leqslant 1$, and
from Lemmas~\ref{wittfrobenius} and~\ref{derhamwittfrobenius}.
\end{proof}

\begin{question}\label{remark}We do not know whether or not the
sequence of sheaves
$$0 \to \{ a_{\et}\TC_q^n(-;p) \} \to
\{ a_{\et}\TR_q^n(-;p) \} \xrightarrow{1-F}
\{ a_{\et}\TR_q^n(-;p) \} \to 0$$
on $\Sch/X$ for the \'{e}tale topology is exact for $q \geqslant 1$.
\end{question}

\begin{theorem}\label{tcsheavestheorem}Let $k$ be a field of positive
characteristic $p$ and assume that resolution of singularities holds
over $k$. Let $X$ be a $d$-dimensional scheme essentially of finite
type over $k$. Then the map of pro-abelian groups
$$\{ H_{\et}^i(X,a_{\et}\TC_q^n(-;p)) \} \to
\{ H_{\eh}^i(X,a_{\eh}\TC_q^n(-;p)) \}$$
induced by the change-of-topology maps is an epimorphism if $q = 1$
and $i = d+1$, an isomorphism if $q = 0$, and the two pro-abelian
groups are zero if $q \leqslant -1$.
\end{theorem}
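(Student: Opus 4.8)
The plan is to treat the three ranges of $q$ separately using the long exact sequences of Proposition~\ref{tcvanishing}, and throughout to exploit that $a_\eh$ factors as $a_\eh\circ a_\et$; that the $\eh$-sheaves $a_\eh\TR_q^n(-;p)$ are $p^n$-torsion, so their $\eh$-cohomology vanishes above degree $d+1$ by Theorem~\ref{ehpcdtheorem} and vanishes in degree $d+1$ by Corollary~\ref{trcohomology}; and that $a_\et\TR_q^n(-;p)\cong a_\Zar\TR_q^n(-;p)$ is a quasi-coherent $W_n(\mathscr{O}_X)$-module whose cohomology may be computed on the small Zariski site, hence vanishes above degree $d$. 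The case $q\leqslant -1$ is then immediate: the pro-sheaf $\{a_\et\TC_q^n(-;p)\}$ vanishes by Proposition~\ref{tcvanishing}(i), so its $\eh$-sheafification $\{a_\eh\TC_q^n(-;p)\}$ vanishes as well, and both pro-abelian groups in the statement are zero.

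For $q=1$ and $i=d+1$ I use the long exact sequence of Proposition~\ref{tcvanishing}(iii). Since $1-F$ is an epimorphism on $\{a_\et\TR_1^n(-;p)\}$ by Lemma~\ref{derhamwittfrobenius}, this sequence splits off a short exact sequence $0\to\mathscr{K}\to\{a_\et\TR_1^n(-;p)\}\xrightarrow{1-F}\{a_\et\TR_1^n(-;p)\}\to 0$ with $\mathscr{K}=\ker(1-F)$, together with an epimorphism $\{a_\et\TC_1^n(-;p)\}\twoheadrightarrow\mathscr{K}$ whose kernel is a quotient of $\{a_\et\TR_2^n(-;p)\}$, hence again a quasi-coherent $W_n(\mathscr{O}_X)$-module and a $p^n$-torsion $\eh$-sheaf. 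Passing to cohomology and using the vanishing statements above, one obtains on the étale side $H^{d+1}_\et(X,a_\et\TC_1^n(-;p))\cong H^{d+1}_\et(X,\mathscr{K})\cong\operatorname{coker}\bigl(1-F\colon H^d_\et(X,a_\et\TR_1^n(-;p))\to H^d_\et(X,a_\et\TR_1^n(-;p))\bigr)$, and on the $\eh$-side the analogous isomorphism $H^{d+1}_\eh(X,a_\eh\TC_1^n(-;p))\cong\operatorname{coker}\bigl(1-F\colon H^d_\eh(X,a_\eh\TR_1^n(-;p))\to H^d_\eh(X,a_\eh\TR_1^n(-;p))\bigr)$. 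Finally, by Theorem~\ref{drwsheavestheorem} together with the isomorphism $a_\et W_n\Omega_X^1\xrightarrow{\sim}a_\et\TR_1^n(-;p)$ of~(\ref{derhamwitttr}), the change-of-topology map $H^d_\et(X,a_\et\TR_1^n(-;p))\to H^d_\eh(X,a_\eh\TR_1^n(-;p))$ is surjective and commutes with $1-F$, hence induces a surjection on cokernels; this is the asserted epimorphism.

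For $q=0$, Proposition~\ref{tcvanishing}(ii) identifies $\{a_\et\TC_0^n(-;p)\}$ with $\{\Z/p^n\Z\}$ and $\{a_\eh\TC_0^n(-;p)\}$ with $\{a_\eh\Z/p^n\Z\}$, so, a levelwise isomorphism of pro-objects being an isomorphism, it suffices to show that $H^i_\et(X,\Z/p^n\Z)\to H^i_\eh(X,a_\eh\Z/p^n\Z)$ is an isomorphism for each fixed $i$ and $n$. Both sides are unchanged under passage to $X^{\red}$ (the étale site is insensitive to nilpotent thickenings, and the $\eh$-side by Lemma~\ref{reduced}), so we may assume $X$ reduced. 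The Artin--Schreier--Witt sequence of Lemma~\ref{wittfrobenius}, its $\eh$-sheafification, and the identification of étale with Zariski cohomology of $W_n(\mathscr{O}_X)$, present $R\Gamma_\et(X,\Z/p^n\Z)$ and $R\Gamma_\eh(X,a_\eh\Z/p^n\Z)$ as the homotopy fibres of $1-F$ acting on $R\Gamma_{\Zar}(X,a_\Zar W_n(\mathscr{O}_X))$ and on $R\Gamma_\eh(X,a_\eh W_n(\mathscr{O}_X))$, compatibly under change of topology. Hence the cofibre of the map in question is the homotopy fibre of $1-F$ acting on $C=\operatorname{cofib}\bigl(R\Gamma_{\Zar}(X,a_\Zar W_n(\mathscr{O}_X))\to R\Gamma_\eh(X,a_\eh W_n(\mathscr{O}_X))\bigr)$, and I must show that $1-F$ is an equivalence on $C$. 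By the local analysis in the proof of Theorem~\ref{drwsheavestheorem}, $C$ is the derived global sections of a complex on $X$ whose cohomology sheaves are supported on the nowhere-dense singular locus of $X$ and which, by the theorem of formal functions, is complete for the filtration by powers of the ideal of that locus; the key point is that the Witt--Frobenius strictly raises this filtration on the associated graded, so that $1-F$ is invertible with inverse $\sum_{j\geqslant 0}F^j$. I would prove this by induction on $\dim X$, using resolution of singularities to express $C$ through the coherent $W_n(\mathscr{O})$-cohomology of a resolution diagram (Proposition~\ref{drwprop} on the smooth part, the inductive hypothesis on the lower-dimensional centres), so that the filtration and the contracting property of Frobenius can be read off as in the proof of Theorem~\ref{drwsheavestheorem}. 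This topological nilpotence of Frobenius on the error complex $C$ is the main obstacle; granting it, the remaining steps are routine five-lemma and spectral-sequence comparisons around the exact sequences of Propositions~\ref{tcvanishing} and~\ref{drwprop}, Corollary~\ref{trcohomology}, and Lemmas~\ref{wittfrobenius} and~\ref{derhamwittfrobenius}.
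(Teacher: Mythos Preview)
Your treatment of $q\leqslant -1$ and of $q=1$, $i=d+1$ is correct and matches the paper. The paper organizes the $q=1$ argument via the hypercohomology spectral sequence of the exact complex of Proposition~\ref{tcvanishing}(iii) rather than by splitting it into short exact sequences, but the inputs are identical: vanishing of $H_{\et}^{d+1}$ and $H_{\eh}^{d+1}$ on the $\TR$-terms (quasi-coherence on the \'etale side, Corollary~\ref{trcohomology} on the $\eh$-side), the bound on $p$-cohomological dimension (Theorem~\ref{ehpcdtheorem}), and the surjectivity in degree~$d$ from Theorem~\ref{drwsheavestheorem} combined with the identification~(\ref{derhamwitttr}).

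For $q=0$ there is a genuine gap. The paper does not attempt what you propose; it simply invokes Proposition~\ref{tcvanishing}(ii) together with~\cite[Theorem~3.6]{geisser2}, which already establishes that the change-of-topology map $H_{\et}^*(X,\Z/p^n\Z)\to H_{\eh}^*(X,\Z/p^n\Z)$ is an isomorphism. Your plan to reprove this by showing that $1-F$ is an equivalence on the cofibre $C$ of $R\Gamma_{\Zar}(X,a_{\Zar}W_n(\mathscr{O}_X))\to R\Gamma_{\eh}(X,a_{\eh}W_n(\mathscr{O}_X))$ does not go through as stated. The theorem of formal functions does not furnish $I$-adic completeness of the global cofibre $C$, and nothing in the proof of Theorem~\ref{drwsheavestheorem} shows that the cohomology of $C$ lives in a formal neighbourhood of the singular locus in a sense that would make the Witt--Frobenius contractive there: the Mayer--Vietoris description of the $\eh$-cohomology involves the Zariski cohomology of the smooth resolution $X'$, on which $F$ is certainly not topologically nilpotent. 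So the ``main obstacle'' you flag is not a detail to be filled in but the entire content of the $q=0$ case, and the route you sketch is not the one by which it is actually proved. The fix is to cite~\cite[Theorem~3.6]{geisser2} (whose proof proceeds by induction on dimension using abstract blow-up squares and proper base change for \'etale cohomology, not a Frobenius-contraction argument).
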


\begin{proof}The statement for $q < 0$ follows from
Proposition~\ref{tcvanishing}~(i), and the statement for $q = 0$ follows
from Proposition~\ref{tcvanishing}~(ii) and from~\cite[Theorem~3.6]{geisser2}
which shows that for the constant sheaf $\Z/p^n\Z$, the
change-of-topology map
$$H_{\et}^*(X,\Z/p^n\Z) \to
H_{\eh}^*(X,\Z/p^n\Z)$$
is an isomorphism. To prove the statement for $q = 1$, we fix a
$d$-dimensional scheme $X$ essentially of finite type over $k$. The
$p$-cohomological dimension of $X$ with respect to both the \'{e}tale
topology and the $\eh$-topology is less than or equal to $d+1$; see
Theorem~\ref{ehpcdtheorem}. Let us write
$$\cdots \to \{ F_{-2}^n \} \to \{ F_{-1}^n \} \to \{ F_0^n \} \to 0$$ 
for the long exact sequence of \'{e}tale sheaves of pro-abelian groups on
$\Sch/X$ from Proposition~\ref{tcvanishing}~(iii). Then we have hypercohomology
spectral sequences
$$\begin{aligned}
E_1^{s,t} & = \{ H_{\et}^t(X,F_{-s}^n) \} \Rightarrow
\{ \mathbb{H}_{\et}^{s+t}(X,F_{\boldsymbol{\cdot}}^n) \}
\cr
E_1^{s,t} & = \{ H_{\eh}^t(X,\alpha^*F_{-s}^n) \} \Rightarrow
\{ \mathbb{H}_{\eh}^{s+t}(X,\alpha^*F_{\boldsymbol{\cdot}}^n) \} \cr
\end{aligned}$$
with the $d_1$-differentials induced by the differential in the
cochain complex $\{ F_{\boldsymbol{\cdot}}^n \}$. Since the complex
$\{ F_{\boldsymbol{\cdot}}^n \}$ is exact and since the cohomological
dimension of $X$ is bounded, the spectral sequences converge and their
abutment is zero. Moreover, the change-of-topology maps induce a map
of spectral sequences from the top spectral sequence to the bottom
spectral sequence. We proved in Corollary~\ref{trcohomology} that the
cohomology groups $\smash{ H_{\eh}^{d+1}(X,\alpha^*F_{-3}^n) }$ and
$\smash{ H_{\eh}^{d+1}(X,\alpha^*F_{-1}^n) }$ vanish. Similarly, the
cohomology groups $\smash{ H_{\et}^{d+1}(X,F_{-3}^n) }$ and $\smash{
H_{\et}^{d+1}(X,F_{-1}^n) }$ vanish, since the sheaves $F_{-3}^n$
and $F_{-1}^n$ are quasi-coherent $W_n(\mathscr{O}_X)$-modules. 
Hence, the map of hypercohomology spectral sequences gives rise to a
commutative diagram
$$\xymatrix{
{ \{ H_{\et}^d(X,F_{-1}^n) \} } \ar[r]^{\id-F} \ar[d] &
{ \{ H_{\et}^d(X,F_0^n) \} } \ar[r] \ar[d] &
{ \{ H_{\et}^{d+1}(X,F_{-2}^n) \} } \ar[r] \ar[d] &
{ 0 } \cr
{ \{ H_{\eh}^d(X,\alpha^*F_{-1}^n) \} } \ar[r]^{\id-F} &
{ \{ H_{\eh}^d(X,\alpha^*F_0^n) \} } \ar[r] &
{ \{ H_{\eh}^{d+1}(X,\alpha^*F_{-2}^n) \} } \ar[r] &
{ 0 } \cr
}$$
with exact rows. Here the middle and left-hand vertical maps are both
equal to the change-of-topology map
$$\{ H_{\et}^d(X,a_{\et}\TR_1^n(-;p)) \} \to
\{ H_{\eh}^d(X,a_{\eh}\TR_1^n(-;p)) \}$$
which by~\cite{h8} coincides with the change-of-topology map
$$\{ H_{\et}^d(X,a_{\et}W_n\Omega_X^1) \} \to
\{ H_{\eh}^d(X,a_{\eh}W_n\Omega_X^1) \}.$$
We proved in Theorem~\ref{drwsheavestheorem} that this map is an
epimorphism. Therefore, also the right-hand vertical map,
$$\{ H_{\et}^{d+1}(X,a_{\et}\TC_1^n(-;p)) \} \to
\{ H_{\eh}^{d+1}(X,a_{\eh}\TC_1^n(-;p)) \},$$
is an epimorphism. This completes the proof.
\end{proof}

\section{Proof of Theorems~\ref{vanishing} and~\ref{tctheorem}}

We first show that, in the statement of Theorem~\ref{tctheorem}, we may
replace the Zariski topology and $\cdh$-topology by the \'{e}tale
topology and $\eh$-topology, respectively.

\begin{theorem}\label{hyperchangeoftopology}Let $k$ be a perfect field
of positive characteristic $p$ and assume that resolution of
singularities holds over $k$. Then for every scheme $X$ essentially of
finite type over $k$, the horizontal maps in the diagram of
change-of-topology maps
$$\xymatrix{
{ \mathbb{H}_{\Zar}^{\boldsymbol{\cdot}}(X,\TC^n(-;p)) }
\ar[r] \ar[d] & 
{ \mathbb{H}_{\et}^{\boldsymbol{\cdot}}(X,\TC^n(-;p)) }
\ar[d] \cr
{ \mathbb{H}_{\cdh}^{\boldsymbol{\cdot}}(X,\TC^n(-;p)) }
\ar[r] &
{ \mathbb{H}_{\eh}^{\boldsymbol{\cdot}}(X,\TC^n(-;p)). }
\cr
}$$
are weak equivalences.
\end{theorem}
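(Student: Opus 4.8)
The plan is to reduce the statement for the presheaf of spectra $\TC^n(-;p)$ to the corresponding statements for the presheaves $\TR^m(-;p)$, $m \geqslant 1$, whose homotopy group sheaves are the quasi-coherent $W_m(\mathscr{O}_X)$-modules studied in Proposition~\ref{trsmoothisomorphism} and Corollary~\ref{trcohomology}. By definition, $\TC^n(-;p)$ is the homotopy equalizer of the restriction and Frobenius maps $R, F \colon \TR^n(-;p) \to \TR^{n-1}(-;p)$, and the Godement--Thomason construction $\mathbb{H}_{\tau}^{\boldsymbol{\cdot}}(X,-)$ preserves homotopy limits, in particular homotopy equalizers; so for every topology $\tau$ there is a natural homotopy fiber sequence
$$\mathbb{H}_{\tau}^{\boldsymbol{\cdot}}(X,\TC^n(-;p)) \to
\mathbb{H}_{\tau}^{\boldsymbol{\cdot}}(X,\TR^n(-;p)) \xrightarrow{R-F}
\mathbb{H}_{\tau}^{\boldsymbol{\cdot}}(X,\TR^{n-1}(-;p))$$
which is compatible with the change-of-topology maps. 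It therefore suffices to prove that
$$\mathbb{H}_{\Zar}^{\boldsymbol{\cdot}}(X,\TR^m(-;p)) \to \mathbb{H}_{\et}^{\boldsymbol{\cdot}}(X,\TR^m(-;p))
\quad\text{and}\quad
\mathbb{H}_{\cdh}^{\boldsymbol{\cdot}}(X,\TR^m(-;p)) \to \mathbb{H}_{\eh}^{\boldsymbol{\cdot}}(X,\TR^m(-;p))$$
are weak equivalences for every $m \geqslant 1$.

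For this I would compare the conditionally convergent hypercohomology spectral sequences $E_2^{s,t} = H_{\tau}^{-s}(X,a_{\tau}\TR_t^m(-;p)) \Rightarrow \mathbb{H}_{\tau}^{-s-t}(X,\TR^m(-;p))$ for the four topologies, which the change-of-topology maps organize into maps of spectral sequences. The sheaves $a_{\tau}\TR_t^m(-;p)$ are $p$-primary torsion, and the $p$-cohomological dimension of $X$ is finite for each of the four topologies — at most $d = \dim(X)$ for the Zariski topology by quasi-coherence, and at most $d+1$ for the \'{e}tale and $\eh$-topologies by Theorem~\ref{ehpcdtheorem}, refined in degree $d+1$ by Corollary~\ref{trcohomology}, with the $\cdh$-case following from the comparison proved below — so all four spectral sequences converge strongly, and it is enough to show that the induced maps on $E_2$-terms are isomorphisms. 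The map $H_{\Zar}^i(X,a_{\Zar}\TR_t^m(-;p)) \to H_{\et}^i(X,a_{\et}\TR_t^m(-;p))$ is the comparison of Zariski and \'{e}tale cohomology of the quasi-coherent $W_m(\mathscr{O}_X)$-module $a_{\Zar}\TR_t^m(-;p) = a_{\et}\TR_t^m(-;p)$, and hence is an isomorphism.

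The remaining map $H_{\cdh}^i(X,a_{\cdh}\TR_t^m(-;p)) \to H_{\eh}^i(X,a_{\eh}\TR_t^m(-;p))$ is the heart of the argument and the step I expect to demand the most care. Here I would use the canonical morphism of sites $\gamma \colon (\Sch/X)_{\eh} \to (\Sch/X)_{\cdh}$ and the associated Leray spectral sequence $H_{\cdh}^s(X,R^t\gamma_*a_{\eh}\TR_u^m(-;p)) \Rightarrow H_{\eh}^{s+t}(X,a_{\eh}\TR_u^m(-;p))$, and show it degenerates by proving that $R^t\gamma_*a_{\eh}\TR_u^m(-;p) = 0$ for $t > 0$ while $\gamma_*a_{\eh}\TR_u^m(-;p) = a_{\cdh}\TR_u^m(-;p)$. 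Since resolution of singularities holds over $k$, every $\cdh$-covering admits a refinement by one coming from a smooth scheme, so that the points of the $\cdh$-topos are henselizations of smooth $k$-schemes at their points. By the continuity statement of Proposition~\ref{limitprop} (applied as in the proofs of Proposition~\ref{limitprop} and Theorem~\ref{ehpcd}), the stalk of $R^t\gamma_*a_{\eh}\TR_u^m(-;p)$ at such a point is a filtered colimit, over the Nisnevich neighbourhoods of the point, of cohomology groups $H_{\eh}^t(V,a_{\eh}\TR_u^m(-;p))$ with $V$ smooth over $k$; by Proposition~\ref{trsmoothisomorphism} these agree with $H_{\Zar}^t(V,a_{\Zar}\TR_u^m(-;p))$, and their colimit is the Zariski cohomology of the corresponding henselian local ring, which vanishes for $t > 0$ and is the stalk of $a_{\cdh}\TR_u^m(-;p)$ for $t = 0$. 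This yields the required degeneration and the desired isomorphism on $E_2$-terms, completing the proof.

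Finally, this is consistent with the remark in the introduction that topological cyclic homology satisfies \'{e}tale descent: combined with Nisnevich descent~\cite[Proposition~3.2.1]{gh}, that descent identifies both upper corners of the square with $\TC^n(X;p)$ via the canonical maps, so that the content of the theorem lies in the bottom horizontal map, i.e. in the assertion that the failure of descent caused by singularities is the same for the $\cdh$- and $\eh$-topologies.
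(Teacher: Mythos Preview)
Your reduction from $\TC^n$ to $\TR^m$ via the homotopy equalizer, and your treatment of the top horizontal map via the descent spectral sequence and quasi-coherence of $a_{\Zar}\TR_t^m(-;p)$, coincide with the paper's proof. The divergence is in the bottom horizontal map.

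The paper does not attempt to compare $H_{\cdh}^i$ and $H_{\eh}^i$ of the sheaves $a_{\tau}\TR_t^m(-;p)$ on a general $X$. Instead it first treats $X$ essentially smooth over $k$, where Proposition~\ref{trsmoothisomorphism} identifies the $E^2$-terms of the two descent spectral sequences and hence the hypercohomology spectra. For general $X$ it then works directly at the level of the hypercohomology spectra: it passes to $X^{\red}$ via Lemma~\ref{reduced}, and argues by induction on $\dim(X)$ using an abstract blow-up square (provided by resolution) with $X'$ smooth and $Y,Y'$ of smaller dimension, together with the fact that both $\mathbb{H}_{\cdh}^{\boldsymbol{\cdot}}$ and $\mathbb{H}_{\eh}^{\boldsymbol{\cdot}}$ take such squares to homotopy cartesian squares. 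No Leray spectral sequence for $\gamma \colon (\Sch/X)_{\eh} \to (\Sch/X)_{\cdh}$ appears.

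Your alternative route contains a genuine gap in the description of the $\cdh$-points. The $\cdh$-topology is strictly finer than the Nisnevich topology even when restricted to $\Sm/k$: the blow-up of a smooth scheme along a smooth center is a $\cdh$-cover that is not a Nisnevich cover. Hence the $\cdh$-local rings are \emph{not} henselizations of points on smooth schemes (they are, roughly, henselian valuation rings), and the stalk of $R^t\gamma_*$ at a $\cdh$-point is a colimit over $\cdh$-neighborhoods, not Nisnevich neighborhoods; your identification of this stalk with ``the Zariski cohomology of the corresponding henselian local ring'' therefore does not go through as written. The underlying idea can be rescued without naming the points: to show that $R^t\gamma_*a_{\eh}\TR_u^m(-;p)=0$ for $t>0$ one must show that the presheaf $U \mapsto H_{\eh}^t(U,a_{\eh}\TR_u^m(-;p))$ has trivial $\cdh$-sheafification, and under resolution every $U$ admits a $\cdh$-cover by smooth affines, on which Proposition~\ref{trsmoothisomorphism} together with quasi-coherence gives the vanishing; the case $t=0$ is handled similarly using that smooth schemes are $\cdh$-locally cofinal. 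Repaired this way your argument is valid and in fact yields the sharper assertion $H_{\cdh}^i(X,a_{\cdh}\TR_t^m(-;p)) \cong H_{\eh}^i(X,a_{\eh}\TR_t^m(-;p))$ for every $X$, which the paper's induction on dimension does not isolate.
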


\begin{proof}The diagram in the statement agrees with the diagram of 
homotopy equalizers of the maps induced by the restriction and
Frobenius maps from the following diagram of change-of-topology maps
to itself.
$$\xymatrix{
{ \mathbb{H}_{\Zar}^{\boldsymbol{\cdot}}(X,\TR^n(-;p)) }
\ar[r] \ar[d] & 
{ \mathbb{H}_{\et}^{\boldsymbol{\cdot}}(X,\TR^n(-;p)) }
\ar[d] \cr
{ \mathbb{H}_{\cdh}^{\boldsymbol{\cdot}}(X,\TR^n(-;p)) }
\ar[r] &
{ \mathbb{H}_{\eh}^{\boldsymbol{\cdot}}(X,\TR^n(-;p)). }
\cr
}
$$
Hence, it suffices to prove that the horizontal maps in this diagram
are weak equivalences. The top horizontal map induces a map from the
spectral sequence
$$E_{s,t}^2 = H_{\Zar}^{-s}(X,a_{\Zar}\TR_t^n(-;p)) \Rightarrow
\mathbb{H}_{\Zar}^{-s-t}(X,\TR^n(-;p))$$
to the spectral sequence
$$E_{s,t}^2 = H_{\et}^{-s}(X,a_{\et}\TR_t^n(-;p)) \Rightarrow
\mathbb{H}_{\et}^{-s-t}(X,\TR^n(-;p)).$$
The map of $E^2$-terms is given by the change-of-topology map
$$H_{\Zar}^{-s}(X,a_{\Zar}\TR_t^n(-;p)) \to 
H_{\et}^{-s}(X,a_{\et}\TR_t^n(-;p))$$
and is an isomorphism, since  $a_{\Zar}\TR_t^n(-;p)$ and
$a_{\et}\TR_t^n(-;p)$ are quasi-coherent
$W_n(\mathscr{O}_X)$-modules. It follows that the map
$$\mathbb{H}_{\Zar}^{\boldsymbol{\cdot}}(X,\TR^n(-;p))
\to 
\mathbb{H}_{\et}^{\boldsymbol{\cdot}}(X,\TR^n(-;p))$$
is a weak equivalence as stated. It remains to prove that also the
lower horizontal map is a weak equivalence.

Suppose first that $X$ is essentially smooth over $k$. The left-hand
vertical map induces a map from the spectral sequence
$$E_{s,t}^2 = H_{\et}^{-s}(X,a_{\et}\TR_t^n(-;p)) \Rightarrow
\mathbb{H}_{\et}^{-s-t}(X,\TR^n(-;p)).$$
to the spectral sequence
$$E_{s,t}^2 = H_{\eh}^{-s}(X,a_{\eh}\TR_t^n(-;p)) \Rightarrow
\mathbb{H}_{\eh}^{-s-t}(X,\TR^n(-;p)).$$
The map of $E^2$-terms is the change-of-topology map
$$H_{\et}^{-s}(X,a_{\et}\TR_t^n(-;p)) \to 
H_{\eh}^{-s}(X,a_{\eh}\TR_t^n(-;p))$$
which is an isomorphism by Proposition~\ref{trsmoothisomorphism}. Therefore,
the change-of-topology map
$$\mathbb{H}_{\et}^{\boldsymbol{\cdot}}(X,\TR^n(-;p))
\to
\mathbb{H}_{\eh}^{\boldsymbol{\cdot}}(X,\TR^n(-;p))$$
is a weak equivalence. One shows similarly that the change-of-topology
map
$$\mathbb{H}_{\Zar}^{\boldsymbol{\cdot}}(X,\TR^n(-;p))
\to
\mathbb{H}_{\cdh}^{\boldsymbol{\cdot}}(X,\TR^n(-;p))$$
is a weak equivalence. Hence, for $X$ essentially smooth over $k$, 
the lower horizontal map is a weak equivalence.

Finally, we show that for a general scheme $X$ essentially of finite
type over $k$, the lower horizontal map in the diagram at the
beginning of the proof is a weak equivalence. Lemma~\ref{reduced} and 
the appropriate descent spectral sequences show that the vertical maps
in the diagram
$$\xymatrix{
{ \mathbb{H}_{\cdh}^{\boldsymbol{\cdot}}(X,\TR^n(-;p)) }
\ar[r] \ar[d] & 
{ \mathbb{H}_{\eh}^{\boldsymbol{\cdot}}(X,\TR^n(-;p)) }
\ar[d] \cr
{ \mathbb{H}_{\cdh}^{\boldsymbol{\cdot}}(
X^{\red}, \TR^n(-;p)) } \ar[r] &
{ \mathbb{H}_{\eh}^{\boldsymbol{\cdot}}(
X^{\red},\TR^n(-;p)) } \cr
}$$
are weak equivalences. Hence, we may assume that $X$ is reduced. We
proceed by induction on the dimension $d = \dim(X)$. The case $d = 0$
has already been proved since every reduced $0$-dimensional scheme of
finite type over the perfect field $k$ is smooth over $k$. So let $X$
be a reduced $d$-dimensional scheme separated and essentially of finite
type over $k$ and assume that the statement has been proved for all
schemes separated and essentially of finite type over $k$ of dimension
at most $d-1$. We may further assume that $X$ is affine, and hence,
separated. Therefore, by the assumption of resolution of
singularities, there exists an abstract blow-up square
$$\xymatrix{
{ Y' } \ar[r]^{i'} \ar[d]^{p'} &
{ X' } \ar[d]^{p} \cr
{ Y } \ar[r]^{i} &
{ X } \cr
}$$
where is $X'$ essentially smooth over $k$ and where the dimensions of
$Y$ and $Y'$ are strictly smaller than $d$. The change-of-topology map
gives rise to a map from the square of symmetric spectra
$$\xymatrix{
{ \mathbb{H}_{\cdh}^{\boldsymbol{\cdot}}(Y',\TR^n(-;p)) }
&
{ \mathbb{H}_{\cdh}^{\boldsymbol{\cdot}}(X',\TR^n(-;p)) }
\ar[l]_{i'{}^*} \cr
{ \mathbb{H}_{\cdh}^{\boldsymbol{\cdot}}(Y,\TR^n(-;p)) }
\ar[u]_{p'{}^*} &
{ \mathbb{H}_{\cdh}^{\boldsymbol{\cdot}}(X,\TR^n(-;p)) }
\ar[l]_{i^*} \ar[u]_{p^*} \cr
}$$
to the square of symmetric spectra
$$\xymatrix{
{ \mathbb{H}_{\eh}^{\boldsymbol{\cdot}}(Y',\TR^n(-;p)) }
&
{ \mathbb{H}_{\eh}^{\boldsymbol{\cdot}}(X',\TR^n(-;p)) }
\ar[l]_{i'{}^*} \cr
{ \mathbb{H}_{\eh}^{\boldsymbol{\cdot}}(Y,\TR^n(-;p)) }
\ar[u]_{p'{}^*} &
{ \mathbb{H}_{\eh}^{\boldsymbol{\cdot}}(X,\TR^n(-;p)) }
\ar[l]_{i^*} \ar[u]_{p^*} \cr
}$$
both of which are homotopy cartesian. The map of upper right-hand
terms is a weak equivalence, since $X'$ is essentially smooth over
$k$. Moreover, since $Y$ and $Y'$ have dimension at most $d-1$, the
maps of the two left-hand terms are weak equivalences by the inductive
hypothesis. It follows that the map of lower right-hand terms is a
weak equivalence. This completes the proof.
\end{proof}

\begin{proof}[Proof of Theorem~\ref{tctheorem}]Let $X$ be a
$d$-dimensional scheme essentially of finite type over the field
$k$. By Theorem~\ref{hyperchangeoftopology}, it suffices to show that
the change-of-topology map
$$\{ \mathbb{H}_{\et}^{-q}(X,\TC^n(-;p)) \} \to
\{ \mathbb{H}_{\eh}^{-q}(X,\TC^n(-;p)) \}$$
is an isomorphism of pro-abelian groups for $q < -d$, and an
epimorphism of pro-abelian groups for $q = -d$. We recall
from~\cite[X~Theorem~5.1]{SGA4} and Theorem~\ref{ehpcdtheorem} that
the $p$-cohomological dimension of $X$ for the both the \'{e}tale
topology and the $\eh$-topology is at most $d+1$. In
particular, the spectral
sequences
$$\begin{aligned}
E_{s,t}^2 & = H_{\et}^{-s}(X,a_{\et}\TC_t^n(-;p))
\Rightarrow \mathbb{H}_{\et}^{-s-t}(X,\TC^n(-;p)) \cr
E_{s,t}^2 & = H_{\eh}^{-s}(X,a_{\et}\TC_t^n(-;p))
\Rightarrow \mathbb{H}_{\eh}^{-s-t}(X,\TC^n(-;p)) \cr
\end{aligned}$$
converge strongly and the induced filtration of the abutment is of
finite length less than or equal to $d+1$. Therefore, as $n$ varies,
these spectral sequences give rise to strongly convergent spectral
sequences of pro-abelian groups 
$$\begin{aligned}
E_{s,t}^2 & = \{ H_{\et}^{-s}(X,a_{\et}\TC_t^n(-;p)) \}
\Rightarrow \{ \mathbb{H}_{\et}^{-s-t}(X,\TC^n(-;p)) \} \cr
E_{s,t}^2 & = \{ H_{\eh}^{-s}(X,a_{\eh}\TC_t^n(-;p)) \}
\Rightarrow \{ \mathbb{H}_{\eh}^{-s-t}(X,\TC^n(-;p)) \}. \cr
\end{aligned}$$
The map in question induces a map of spectral sequences from the top
spectral sequence to the bottom spectral sequence which, on
$E^2$-terms, is given by the change-of-topology map in sheaf
cohomology. The two $E^2$-terms are concentrated in the region where
$-d-1 \leqslant s \leqslant 0$ and $t \geqslant 0$. Moreover,
Theorem~\ref{tcsheavestheorem} shows that the change-of-topology map is
an isomorphism of pro-abelian groups if $t = 0$, and an epimorphism
of pro-abelian groups if $t = 1$ and $s = -d-1$. The theorem follows.
\end{proof}

\begin{proof}[Proof of Theorem~\ref{vanishing}]Let $X$ be a
$d$-dimensional scheme essentially of finite type over the field
$k$. Then Theorem~\ref{haesemeyer} shows that the diagram of
pro-spectra
$$\xymatrix{
{ K(X) } \ar[r] \ar[d] &
{ \mathbb{H}_{\cdh}^{\boldsymbol{\cdot}}(X,K(-)) }
\ar[d] \cr
{ \{ \TC^n(X;p) \} } \ar[r] &
{ \{
  \mathbb{H}_{\cdh}^{\boldsymbol{\cdot}}(X,\TC^n(-;p)) \} }
}$$
is homotopy cartesian. We conclude from Theorem~\ref{tctheorem} that
the canonical map 
$$K_q(X) \to \mathbb{H}_{\cdh}^{-q}(X,K(-))$$
is an isomorphism, for $q < -d$, and an epimorphism for $q = -d$. The
groups on the right-hand side are the abutment of the spectral
sequence
$$E_{s,t}^2 = H_{\cdh}^s(X,a_{\cdh}K_t(-)) \Rightarrow
\mathbb{H}_{\cdh}^{-s-t}(X,K(-)).$$
The assumption that resolution of singularities holds over $k$ implies
that every $\cdh$-covering of an object in $\Sch/X$ admits a refinement
to a $\cdh$-covering by schemes essentially smooth over $k$. This, in
turn, implies that the sheaf $a_{\cdh}K_t(-)$ vanishes for $t < 0$,
and is canonically isomorphic to the constant sheaf $\Z$ for $t =
0$. We recall from~\cite[Theorem~12.5]{suslinvoevodsky} that the
$\cdh$-cohomological dimension of $X$ is less than or
equal to $d$. Therefore, the groups $E_{s,t}^2$ are zero unless $-d
\leqslant s \leqslant 0$ and $t \geqslant 0$. It follows that $\smash{
  \mathbb{H}_{\cdh}^{-q}(X,K(-)) }$ is zero for $q < -d$, 
and that $\smash{ \mathbb{H}_{\cdh}^d(X,K(-)) }$ is canonically
isomorphic to $\smash{ H_{\cdh}^d(X,\Z) }$. Hence $K_q(X)$ vanishes
for $q < -d$ as stated.
\end{proof}

\begin{remark}The proof above also shows that $K_{-d}(X)$ surjects
onto $H_{\cdh}^d(X,\Z)$.
\end{remark}

\begin{theorem}\label{finitetype}Suppose that strong resolution of
singularities holds over every infinite perfect field of positive
characteristic $p$ and let $X$ be a $d$-dimensional scheme of finite
type over some field of characteristic $p$. Then $K_q(X)$ vanishes for
$q < -d$.
\end{theorem}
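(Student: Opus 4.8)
The plan is to reduce from an arbitrary field of characteristic $p$ to the infinite perfect fields covered by Theorem~\ref{vanishing} by a sequence of standard base-change and continuity arguments. First I would dispose of the difference between $X$ and a model over a finitely generated subfield: since $X$ is of finite type over a field $F$ of characteristic $p$, there is a finitely generated subfield $F_0 \subset F$ and a scheme $X_0$ of finite type over $F_0$ with $X = X_0 \times_{F_0} F$, and one has $X = \lim_i X_0 \times_{F_0} F_i$ over the filtered system of finitely generated subfields $F_0 \subset F_i \subset F$, with affine transition maps. By continuity of $K$-theory (Thomason--Trobaugh, \cite[Theorem~10.8]{thomasontrobaugh}; see also \cite[Section~1]{gh5} for the non-connective version) the negative $K$-groups commute with such filtered limits, so $K_q(X) = \colim_i K_q(X_0 \times_{F_0} F_i)$; hence it suffices to treat $X$ of finite type over a \emph{finitely generated} field of characteristic $p$, and then, replacing $F$ by its perfection and using continuity along the Frobenius tower together with the fact that $\dim(X)$ is unchanged, over a field that is a finitely generated extension of a perfect field.

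Next I would pass to an infinite perfect field by base change along $F \to \bar{F}_p(F)$ or, more directly, along an extension $F \hookrightarrow k$ where $k$ is an infinite perfect field of characteristic $p$ (for instance $k = \overline{F}$, the algebraic closure, which is perfect and infinite). Writing $X_k = X \times_F k$, continuity of $K$-theory again gives $K_q(X_k) = \colim K_q(X \times_F k')$ over the finitely generated subextensions $F \subset k' \subset k$; but the point is rather to go the other way. The standard device is a transfer/trace argument: for a finite separable extension $F \subset F'$ the composite $K(X) \to K(X_{F'}) \to K(X)$ of pullback and transfer is multiplication by $[F':F]$, while for purely inseparable extensions one invokes that negative $K$-groups are insensitive to such extensions after inverting $p$ and, crucially, that in characteristic $p$ the presheaf $\{F^n(-)\}$ is already invariant under nilpotent and inseparable phenomena by \cite[Theorem~B]{gh5}. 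Combining these, the vanishing of $K_q(X_k)$ for $q < -d$, which holds by Theorem~\ref{vanishing} since $k$ is infinite perfect and strong resolution holds over $k$ by hypothesis and since $\dim(X_k) = \dim(X) = d$, descends to $K_q(X) = 0$ for $q < -d$.

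The main obstacle is the descent step from the infinite perfect field $k$ back to the given field $F$: a naive transfer argument only controls torsion prime to $p$, and negative $K$-groups in characteristic $p$ can have $p$-torsion, so one must argue the $p$-part separately. Here I would use the cdh-descent machinery of Theorems~\ref{haesemeyer} and~\ref{tctheorem}: the fibre term $\mathbb{H}_{\cdh}^{-q}(X,K(-))$ is computed, via the spectral sequence $E_{s,t}^2 = H_{\cdh}^s(X, a_{\cdh}K_t(-))$, from cdh-sheaf cohomology with the sheaf $a_{\cdh}K_t(-)$ vanishing for $t<0$; since every cdh-cover of $X$ refines to one by schemes essentially smooth over the perfect field $\overline{F_p}$ underlying everything, and since the relevant cohomological dimension bound $\cdh\text{-}\mathrm{cd}(X) \le d$ of \cite[Theorem~12.5]{suslinvoevodsky} is a statement about $X$ itself and not about the ground field, the argument of the proof of Theorem~\ref{vanishing} applies verbatim once we know strong resolution over the infinite perfect fields involved in the smooth covers. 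That is exactly the hypothesis of the present theorem, so the $p$-primary obstruction is absorbed into the resolution assumption and the proof concludes as before.
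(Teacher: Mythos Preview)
Your continuity reduction to a finitely generated base field is fine and mirrors the paper's second step. The genuine gap is the descent from the infinite perfect field back to $F$. Passing to $k=\bar F$ and invoking transfer along finite subextensions only shows that each element of $K_q(X)$ is killed by \emph{some} integer, i.e.\ that $K_q(X)$ is torsion; it does not pin the torsion to primes away from $p$, and it certainly does not give vanishing. Your proposed repair, to run the proof of Theorem~\ref{vanishing} ``verbatim'' using the $\cdh$-machinery, does not work as stated: Theorem~\ref{haesemeyer} (and the underlying Theorem~\ref{haesemeyertheorem}) is proved only for $X$ essentially of finite type over an \emph{infinite perfect} field, so you cannot invoke it over $\Fp$ or over a non-perfect finitely generated field; and the assertion that every $\cdh$-cover of $X$ refines to one by schemes essentially smooth over $\overline{\Fp}$ is not correct when $X$ has positive transcendence degree over $\Fp$. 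So the $p$-primary part of $K_q(X)$ is left uncontrolled.

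The paper closes exactly this gap by a different and sharper choice of auxiliary extension. For $X$ essentially of finite type over a \emph{finite} field $F$, fix an arbitrary prime $\ell$ and let $F'/F$ be the (unique) $\Z_\ell$-extension; then $F'$ is infinite and perfect, so $K_q(X\times_F F')=0$ for $q<-d$ by Theorem~\ref{vanishing}. All finite layers $F_i'/F$ have $\ell$-power degree, so the transfer argument shows that the kernel of $K_q(X)\to K_q(X\times_F F')$ is $\ell$-primary; hence $K_q(X)$ is $\ell$-primary. Since $\ell$ was arbitrary, $K_q(X)=0$. The general case then follows by spreading out over a perfect subfield $k_0\subset k$ and writing $X$ as a filtered colimit of schemes $X_\alpha$ essentially of finite type over $k_0$, to which either Theorem~\ref{vanishing} (if $k_0$ is infinite) or the finite-field case just proved applies. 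The point you are missing is this $\Z_\ell$-extension trick, which converts ``torsion'' into ``$\ell$-primary for every $\ell$'' and thereby forces vanishing.
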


\begin{proof}We first let $F$ be a finite field of characteristic
$p$, and let $X$ be a $d$-dimensional scheme essentially of finite
type over $F$. We let $\ell$ be a prime number, let $F'$ be a Galois
extension of $F$ with Galois group isomorphic to the additive group
$\Z_{\ell}$ of $\ell$-adic integers, and let $X'$ be the base-change
of $X$ along $\Spec F' \to \Spec F$. Then $F'$ is an infinite perfect
field. By assumption, strong resolution of singularities holds over
$F'$, so Theorem~\ref{vanishing} shows that $K_q(X')$ vanishes for $q
<  d$. We claim that the kernel of the pull-back map $K_*(X) \to
K_*(X')$ is an $\ell$-primary torsion group. To see this, let
$F_i'$ be the unique subfield of $F'$ such that $[F_i' : F] = \ell^i$,
and let $X_i'$ be the base-change of $X$ along $\Spec F_i' \to \Spec
F$. The composition of the pull-back and push-forwards maps
$$K_*(X) \to K_*(X_i') \to K_*(X)$$
is equal to multiplication by $\ell^i$, and hence, the kernel of the
pull-back map is annihilated $\ell^i$. Moreover, it follows
from~\cite[Section~IV.8.5]{EGA} that the canonical map
$$\colim_i K_*(X_i') \to K_*(X')$$
is an isomorphism. Since filtered colimits and finite limits of
diagrams of sets commute, we conclude that the kernel of the pull-back
map $K_*(X) \to K_*(X')$ is an $\ell$-primary torsion group as
claimed. It follows that for $q < -d$, $K_q(X)$ is an $\ell$-primary
torsion group. Since this is true for every prime number $\ell$, we
find that for $q < -d$, the group $K_q(X)$ is zero.

We let $X$ be a $d$-dimensional scheme of finite type over an
arbitrary field $k$ of characteristic $p$, and let $k_0 \subset k$ be
a perfect subfield. By~\cite[Theorem~IV.8.8.2]{EGA}, there exists
an intermediate field $k_0 \subset k_1 \subset k$ such that $k_1$ is
finitely generated over $k_0$ together with a scheme $X_1$ of finite
type over $k_1$ such that $X$ is isomorphic to the base-change of
$X_1$ along $\Spec k \to \Spec k_1$. Let $k_1 \subset k_{\alpha}
\subset k$ be a finite generated extension of $k_1$ contained in $k$,
and let $X_{\alpha}$ be the base-change of $X_1$ along $\Spec
k_{\alpha} \to \Spec k_1$. Then $X_{\alpha}$ is of finite type over
$k_{\alpha}$. But then $X_{\alpha}$ is essentially of finite type over
$k_0$. Indeed, the field $k_{\alpha}$ is the quotient field of a ring
$A_{\alpha}$ of finite type over $k_0$, and
by~\cite[Theorem~IV.8.8.2]{EGA}, we can find a scheme
$\mathscr{X}_{\alpha}$ of finite type over $A_{\alpha}$ such that
$X_{\alpha}$ is the generic fiber of $\mathscr{X}_{\alpha}$ over
$A_{\alpha}$. Therefore, the group $K_q(X_{\alpha})$ vanishes for $q <
-d$. Finally, it follows  from~\cite[Proposition~IV.8.5.5]{EGA} that
the canonical map
$$\colim_{\alpha} K_q(X_{\alpha}) \to K_q(X)$$
from the filtered colimit indexed by all intermediate fields $k_1
\subset k_{\alpha} \subset k$ finitely generated over $k_1$ is an
isomorphism. Hence, the group $K_q(X)$ vanishes for $q < -d$ as
stated.
\end{proof}

\begin{acknowledgements}
It is a pleasure to thank Christian Haesemeyer for a number of helpful
conversations. We are particular indebted to Chuck Weibel for pointing
out a mistake in an earlier version to this paper. Finally, we are very
grateful to an anonymous referee for carefully reading the paper and
suggesting several improvements. The work reported in this paper was
done in part while the first author was visiting the University of
Tokyo. He wishes to thank the university and Takeshi Saito in
particular for their hospitality.
\end{acknowledgements}

\providecommand{\bysame}{\leavevmode\hbox to3em{\hrulefill}\thinspace}
\providecommand{\MR}{\relax\ifhmode\unskip\space\fi MR }
\providecommand{\MRhref}[2]{%
  \href{http://www.ams.org/mathscinet-getitem?mr=#1}{#2}
}
\providecommand{\href}[2]{#2}

\end{document}